\mathchardef\dashmod="2D
\renewcommand{\ker}{\mathrm{ker}}
\newcommand{\ext}{\mathrm{Ext}}
\renewcommand{\mod}{\mathbf{Mod}}
\newcommand{\cat}{\mathrm{Cat}}
\newcommand{\Top}{\mathcal{T}}
\newcommand{\sh}{\mathbf{SH}}
\newcommand{\stinf}{\mathbf{St}_{\infty}}
\newcommand{\st}{\mathbf{St}}
\newcommand{\un}{\mathbb{1}}
\newcommand{\eq}{\cong}
\newcommand{\steq}{\simeq}
\newcommand{\stext}{\underline{\mathrm{Ext}}}
\newcommand{\gm}{\mathrm{Aut}}
\newcommand{\End}{\mathrm{End}}
\newcommand{\pic}{\mathrm{pic}}
\newcommand{\Pic}{\mathrm{Pic}}
\newcommand{\Z}{\mathbb{Z}}
\newcommand{\F}{\mathbb{F}}
\newcommand{\E}{\mathcal{E}}
\newcommand{\A}{\mathcal{A}}
\newcommand{\B}{\mathcal{B}}
\newcommand{\C}{\mathcal{C}}
\newcommand{\D}{\mathcal{D}}
\renewcommand{\L}{\mathcal{L}}
\newcommand{\sur}{/ \hspace{-0.07cm}/}
\theoremstyle{definition}
\newtheorem{de}{Definition}[section]
\theoremstyle{plain}
\newtheorem{thm}[de]{Theorem}
\newtheorem{lemma}[de]{Lemma}
\newtheorem{pro}[de]{Proposition}
\newtheorem{cor}[de]{Corollary}
\newtheorem{prob}[de]{Problem}
\newtheorem*{thm*}{Theorem}
\newtheorem*{lemma*}{Lemma}
\newtheorem*{pro*}{Proposition}
\newtheorem*{cor*}{Corollary}
\newtheorem*{prob*}{Problem}
\theoremstyle{remark}
\newtheorem{rk}[de]{Remark}
\newtheorem{ex}[de]{Example}
\title[Picard Hopf]{The stable Picard group of Hopf algebras via descent, and an application}
\author{Nicolas Ricka}
\address{Department of Mathematics, Wayne State University \\
 Detroit, MI 48202}
\email{nicolas.ricka@wayne.edu}
\thanks{The author is indebted to Akhil Mathew for suggesting such an approach to the study of Picard groups of Hopf algebras}
\keywords{Stable category of modules, Steenrod algebra, Homotopical descent}
\subjclass[2010]{55S10,55P42,19L41}
\begin{document}
\begin{abstract}
Let $A$ be a cocommutative finite dimensional Hopf algebra over the field with two elements, satisfying some mild hypothesis. We set up a descent spectral sequence which computes the Picard group of the stable category of modules over $A$. The starting point is the observation that the stable category of $A$-modules can be reconstructed, as an $\infty$-category, as the totalization of a cosimplicial $\infty$-category whose layers are related to the stable categories of modules over the quasi-elementary sub-Hopf-algebras of $A$. This leads to a spectral sequence computing the Picard group which, in some cases, is completely understood. This also leads to a spectral sequence answering a lifting problem in the category of $A$-modules.
We then show how to apply this machinery to compute Picard groups and solve lifting problems in the case of $\A(1)$-modules, where $\A(1)$ is the subalgebra of the Steenrod algebra generated by the two first Steenrod squares.
\end{abstract}

\maketitle

\section*{Introduction}

\textbf{Conventions:} Let $\F$ be the field with two elements. Every algebraic structure is implicitly over the base field $\F$. The Hopf algebras under consideration in this paper are connected, cocommutative finite dimensional Hopf algebras, unless explicitly specified otherwise. Moreover, unless specified otherwise, $\ext$ means extension group in the stable category, so that $\ext^{s}$ is defined for all $s \in \Z$.

\textbf{Statement of the results.}

Let $A$ be a Hopf algebra. The diagonal $\Delta: A \rightarrow A \otimes A$ induces a monoidal product $\otimes$ on the category ${_A}\mod$ of graded $A$-modules refining the tensor product of $\F$-vector spaces. The unit for this monoidal structure is $\F$ concentrated in degree zero, denoted $\un$.
As $A$ is cocommutative, $({_A}\mod, \otimes, \un)$ is a symmetric monoidal category.

Let $\stinf(A)$ be the stable category of $A$-modules (see Proposition \ref{pro:stablecat}). The classical construction gives the stable category of modules as a model category. Here, we merely need its underlying $\infty$-category. We denote $\st(A)$ its associated homotopy category.

It has been clear for a long time that some sub-Hopf algebras of $A$ are of particular importance. These are the quasi-elementary sub-Hopf-algebras, see Definition \ref{de:quasielem}. The definition being quite technical for this introduction, the reader in encouraged to think of these as the analogue of the group algebras for elementary abelian subgroups in representation theory of finite groups.

\begin{pro}[Theorem 1.2-1.4, \cite{Pal97}]
The quasi-elementary sub-Hopf algebras of $A$ detects the following properties: \begin{itemize}
\item Nilpotence in $\ext(\un, \un)$, and in $\ext(M,M)$, for any module $M$,
\item Freeness of a module.
\end{itemize}
Precisely, an element in $\ext_A(M,M)$ is nilpotent if and only if all of its restrictions in $\ext_E(M,M)$ are nilpotent, for all quasi-elementary sub-Hopf algebra $E$ of $A$, and an $A$-module $M$ is free if and only if all of its restrictions to quasi-elementary sub-Hopf algebras of $A$ are free.
Moreover, there is a Quillen $F$-isomorphism (isomorphism modulo nilpotents)
$$  \ext_A(\un, \un) \eq \lim_B \ext_B(\un, \un),$$
where the limit is taken over all the quasi-elementary sub-Hopf-algebras of $A$.
\end{pro}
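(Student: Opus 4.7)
The plan is to follow the Quillen-Palmieri strategy, organizing everything around the cohomology variety $V_A := \mathrm{Proj}(\ext_A(\un, \un))$ and its stratification by subvarieties coming from quasi-elementary sub-Hopf algebras. First I would record that $\ext_A(\un, \un)$ is a finitely generated graded $\F$-algebra: for a finite-dimensional cocommutative Hopf algebra over a field of positive characteristic, this is the Friedlander-Suslin theorem, and it ensures that $V_A$ is a well-behaved projective variety on which the detection statements can be phrased geometrically. The same input gives that $\ext_A^*(M,M)$ is a finitely generated module over $\ext_A^*(\un, \un)$ for any finite-dimensional $M$.

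Next I would develop support varieties: to an $A$-module $M$ associate $V_A(M) \subseteq V_A$, defined as the closed subvariety cut out by the annihilator of $\ext_A^*(M,M)$ viewed as a module over $\ext_A^*(\un, \un)$. The freeness statement then becomes: $M$ is free if and only if $V_A(M) = \emptyset$, together with the covering property that the restriction maps $V_E(M) \to V_A(M)$ from quasi-elementary $E \subseteq A$ jointly surject onto $V_A(M)$. The nilpotence statement in $\ext_A(M,M)$ reduces analogously to: an element is nilpotent iff its image in the coordinate ring of every $V_E(M)$ is nilpotent, which again depends on the same covering property. Both reductions are formal once support varieties and finite generation are in place.

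The technical heart is a Quillen-type stratification theorem: every homogeneous prime of $\ext_A(\un, \un)$ is the radical of the image of a prime coming from some quasi-elementary sub-Hopf algebra under the restriction map. For this I would pass to the associated graded Hopf algebra with respect to the radical filtration, which is primitively generated and hence close to a restricted enveloping algebra, and classify its quasi-elementary sub-Hopf algebras explicitly via Milnor-Moore. One then bootstraps from the associated graded back to $A$ through the convergent spectral sequence comparing their Ext algebras, using that restriction is compatible with the filtration. Granting stratification, the Quillen $F$-isomorphism $\ext_A(\un, \un) \to \lim_E \ext_E(\un, \un)$ is a formal consequence: the kernel lies in every homogeneous prime, hence is nilpotent, and surjectivity modulo $p$-th powers follows from a Noetherian argument applied to the finite covering of varieties, exactly as in Quillen's original group cohomology paper.

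The main obstacle is the stratification step, specifically the passage from $A$ to its associated graded and back. The Hopf-algebraic subtlety is that a quasi-elementary sub-Hopf algebra of the associated graded need not lift to a sub-Hopf algebra of $A$, so one has to argue indirectly that its variety is still covered by genuine quasi-elementary sub-Hopf algebras of $A$; this is where Palmieri's refinements of Wilkerson's structure theorem for connected cocommutative Hopf algebras enter. Everything else in the argument---the Friedlander-Suslin input, the formal support variety machinery, and the deduction of the $F$-isomorphism from covering---is a fairly direct transcription of the group-cohomology story, and the Hopf-algebraic work is concentrated in this single geometric step.
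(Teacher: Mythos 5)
The paper does not prove this proposition at all: it is imported verbatim from Palmieri (the bracketed ``Theorem 1.2--1.4, [Pal97]'' is the proof), so the only meaningful comparison is with Palmieri's own argument, and your sketch does not reproduce it. Palmieri's proof of the nilpotence and freeness detection runs through a Devinatz--Hopkins--Smith-style argument in the stable module category (vanishing lines in Adams spectral sequences over quotient Hopf algebras, thick subcategory and self-map arguments), not through a transcription of Quillen's stratification of cohomology varieties. That by itself would be fine if your alternative route were complete, but it is not.

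The central gap is the sentence ``Both reductions are formal once support varieties and finite generation are in place.'' They are not. The implication $V_A(M)=\emptyset\Rightarrow M$ free is a Dade's-lemma/rank-variety type theorem, and relating $V_A(M)$ to the subvarieties $V_E(M)$ coming from restriction is an Avrunin--Scott type theorem for modules; in the finite group scheme setting these required the $\pi$-point/one-parameter-subgroup machinery of Suslin--Friedlander--Bendel, and in the graded connected setting they are essentially equivalent to the statement you are trying to prove. Burying them in the word ``formal'' leaves the theorem unproved. Second, $\ext_A(M,M)$ is a noncommutative ring, so ``its image in the coordinate ring of $V_E(M)$'' does not typecheck; the nilpotence statement for self-maps of a general module $M$ is the analogue of the Hopkins--Smith nilpotence theorem for self-maps and cannot be reduced to commutative-algebraic statements about supports without further argument. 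Third, the stratification step you correctly identify as the technical heart --- descending from the associated graded, primitively generated Hopf algebra back to $A$ when quasi-elementary sub-Hopf algebras need not lift --- is only asserted (``argue indirectly\ldots this is where Palmieri's refinements enter''), which is a pointer to the literature rather than a proof. A minor point: for graded connected cocommutative Hopf algebras the finite generation of cohomology is Wilkerson's theorem rather than Friedlander--Suslin, though at $p=2$ the distinction is harmless.
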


However, finding all the quasi-elementary sub-Hopf algebras of a given Hopf algebra $A$ is a difficult problem in general. Hopefully, for a large class of algebras, these are completely identified with a the simpler notion of exterior Hopf subalgebras.

\begin{ex}
For any sub-Hopf-algebra $A$ of the modulo $2$ Steenrod algebra, the quasi-elementary Hopf subalgebras of $A$ are exactly the exterior Hopf subalgebras by \cite[Section 2.1.1]{Pal01}. Moreover, these are completely identified in \textit{loc cit}.
For a finite group $G$, one can consider the associated group Hopf-algebra $\F G$. In that case, the elementary sub-Hopf algebras of $\F G$ correspond to elementary abelian subgroups of $G$.
\end{ex}

Let $A$ be a cocommutative Hopf algebra, for example a sub-Hopf-algebra of the Steenrod algebra. The classification of all indecomposable modules is known to be tricky in the simplest cases (see for example the classification of indecomposable $\A(1)$-modules given by Crawley-Boevey in \cite{CB89}), and that bigger Hopf algebras are wild.

A first invariant of the stable module category, that is both still interesting and computable is the Picard group of this category, that is the group of stably invertible modules. These modules, called endotrivial modules in representation theory, played a fundamental role in the study of representations of finite $p$-groups, and motivated the work \cite{CT04,CT05} which offers a complete classification of the Picard group of the stable category of representations of finite $p$-groups. \\

The starting point of our analysis is the following result, which says roughly that the stable category of $A$-modules can be recovered from the stable categories of $E$-modules, where $E$ runs through the quasi-elementary sub-Hopf-algebras of $A$.

\begin{thm*}[{Theorem \ref{thm:totalization}}]
The functor $T \otimes : \stinf(A) \rightarrow \mod_{\stinf(A)}(T)$
extends to an equivalence of stable monoidal $\infty$-categories
\begin{equation}
 \stinf(A) \cong Tot\left(  \xymatrix{\mod_{\stinf(A)}(T)
  \ar@<-.5ex>[r] \ar@<.5ex>[r] & \mod_{\stinf(A)}(T\otimes T) \ar@<-.7ex>[r] \ar@<-.0ex>[r] \ar@<.7ex>[r]
& \hdots}\right),
\end{equation}
where $T$ is an algebra object in $\stinf(A)$ related to the forgetful functors from $\stinf(A)$ and $\stinf(E)$, for $E \subset A$ (see Definition \ref{de:defT}).
When $A$ admits only one elementary sub-Hopf-algebra $E$, this reduces to
\begin{equation*} 
 \stinf(A) \cong Tot\left(  \xymatrix{\stinf(E) \ar@<-.5ex>[r] \ar@<.5ex>[r] & \mod_{\stinf(E)}(T_E) \ar@<-.7ex>[r] \ar@<-.0ex>[r] \ar@<.7ex>[r]  & \mod_{\stinf(E)}(T_E^{\otimes 2}) \hdots}\right).
\end{equation*}
\end{thm*}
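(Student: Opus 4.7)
The plan is to realize this equivalence as an instance of descent along a commutative algebra object, following the formalism developed by Lurie and Mathew. More precisely, I would show that $T$ is a \emph{descendable} commutative algebra in the stable symmetric monoidal $\infty$-category $\stinf(A)$; the abstract descent theorem then produces the desired equivalence with the totalization of $\mod_{\stinf(A)}(T^{\otimes \bullet + 1})$, the cosimplicial diagram being the Amitsur (\v{C}ech) complex of $T$ augmented by $\un \to T$.

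With $T$ constructed as in Definition \ref{de:defT}, the functor $T \otimes - \colon \stinf(A) \to \mod_{\stinf(A)}(T)$ is symmetric monoidal, colimit-preserving, and admits a forgetful right adjoint, so there is a canonical comparison from $\stinf(A)$ to the totalization in question. The first key step is to verify that $T \otimes -$ is \emph{conservative}. If $T \otimes M \simeq 0$, the construction of $T$ forces the restriction of $M$ to each quasi-elementary sub-Hopf-algebra $E \subset A$ to vanish in $\stinf(E)$, i.e.\ $M$ becomes free over $E$. Palmieri's detection theorem recalled in the introduction then implies that $M$ itself is free, hence $M \simeq 0$ in $\stinf(A)$.

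The main obstacle is to upgrade this conservativity to descendability. By Mathew's criterion, this amounts to showing that the augmentation ideal $I = \fiber(\un \to T)$ is nilpotent in a uniform, $\infty$-categorical sense: some iterated tensor product $I^{\otimes N} \to \un$ must vanish in $\stinf(A)$. This is where the nilpotence half of Palmieri's theorem becomes decisive: the restriction map $\ext_A(\un, \un) \to \prod_E \ext_E(\un, \un)$ is a Quillen $F$-isomorphism, so any class killed by all restrictions is nilpotent, and the analogous statement for $\ext_A(M,M)$ lifts this to the module level. Combined with the finite dimensionality of $A$, which makes $\stinf(A)$ compactly generated by the unit and bounds the degrees in which relevant obstructions can live, this should yield a uniform nilpotence bound on $I$, establishing descendability. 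I expect this step to require the most care, since extracting a uniform bound from a statement about individual classes being nilpotent is exactly the passage from classical detection to $\infty$-categorical descent that the machinery is designed to make precise.

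Once descendability is established, the descent theorem immediately yields the first displayed equivalence. The reduction in the case of a unique quasi-elementary sub-Hopf-algebra $E \subset A$ then follows by identifying $\mod_{\stinf(A)}(T) \simeq \stinf(E)$: in that situation $T$ is the algebra encoding the restriction-induction adjunction $\stinf(A) \rightleftarrows \stinf(E)$, and Barr--Beck--Lurie applied to that adjunction (whose conservativity on the $E$-side is automatic) produces precisely this identification. The higher tensor powers $T^{\otimes k}$ then correspond to $T_E^{\otimes (k-1)}$-modules inside $\stinf(E)$, giving the second display.
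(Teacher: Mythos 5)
Your overall architecture (conservativity of $T\otimes-$ via Palmieri's freeness detection, then descent, then identification of the layers) is the right one, and the conservativity and layer-identification steps match the paper's Lemma \ref{lemma:tconservative} and Proposition \ref{pro:extofscalarshopf}. But the decisive step is handled differently, and your version has a gap where the paper's does not. The paper never proves that $T$ is descendable: it invokes \cite[Theorem 2.30]{MNN15}, which for a \emph{dualizable} algebra identifies the totalization with the subcategory of $T$-complete objects, and then checks that every object of $\stinf(A)$ is $T$-complete by the soft argument of \cite[Example 2.17]{MNN15}. Concretely, with $I=\fiber(\un\to T)$, the transition maps of the tower $\{I^{\otimes n}\otimes T\}$ are null because $\un\to T$ splits after base change to $T$; since $T$ is dualizable, $T\otimes-$ commutes with the inverse limit of the tower, so $T\otimes\lim_n(I^{\otimes n}\otimes M)\simeq 0$, and conservativity finishes. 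The only inputs are finite dimensionality of $T$ and the conservativity you already established.

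By contrast, you ask for descendability, i.e.\ a single $N$ for which $I^{\otimes N}\to\un$ is null, to be extracted from the Quillen $F$-isomorphism. That is strictly stronger than what is needed (it would give a horizontal vanishing line at a finite page of the descent spectral sequence, which the paper's argument does not provide), and it is exactly the step you defer: passing from ``each class killed by all restrictions is nilpotent'' to a uniform bound on the Amitsur tower is the substantive content of descent up to nilpotence, and it requires more than finite dimensionality of $A$ (for instance Noetherianity of $\ext_A(\un,\un)$ together with a thick tensor-ideal argument). As written, this is a genuine gap; the lesson of the paper's proof is that for the totalization statement you never need to close it. One further small point: in the identification $\mod_{\stinf(A)}(T_E)\simeq\stinf(E)$, the conservativity that must be verified is that of the right adjoint $\hom_E(A,-)$ (Lemma \ref{lemma:hombaconservative}), not of restriction itself, which is not quite ``automatic.''
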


The crucial role played by quasi-elementary sub-Hopf algebras of $A$ is not a surprise in light of the properties recalled at the beginning of this section.
However, the formulation of this property given in Theorem \ref{thm:totalization} allows to tackle with the two following problems in a homotopy theoretic way.

\begin{prob*}[Determination of the Picard group \ref{prob:pic}]
What is the group of stably invertible $A$-modules with respect to the tensor product?
\end{prob*}
\begin{prob*}[The lifting problem \ref{prob:lift}]
Let $M_E \in \stinf(E)$ for all $E\in\E$. Does there exist an $A$-module $M$ such that $U_EM=M_E$ for all $E \in \E$?
\end{prob*}

Recall that there is a refinement of the Picard group of $A$ as a space, which we denote $\pic(A)$ here (see Definition \ref{de:picardspace}). The set of connected components of this space has a natural group structure, which makes it isomorphic to the Picard group of $\st(A)$. Its higher homotopy groups are identified in the following way.

\begin{pro*}[{Proposition \ref{pro:pipic}}]
There is the following identification of the homotopy groups of the Picard space:
\begin{enumerate}
\item $\pi_0(pic(A))$ is the Picard group of $\st(A)$, that is the group of invertible elements with respect to the tensor product,
\item $\pi_1(pic(A)) = Aut_{A}(\mathbb{1})$.
\item $\pi_i(pic(A)) = \pi_{i-1}(\hom_A(\un,\un))$, if $i \geq 2$.
\end{enumerate}
\end{pro*}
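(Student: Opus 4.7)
The plan is to exploit the fact that $\pic(A)$ is constructed as the maximal $\infty$-subgroupoid of $\stinf(A)$ spanned by the invertible objects, endowed via $\otimes$ with the structure of a grouplike $E_\infty$-space pointed at $\un$. The three identifications will then follow from a direct inspection of its homotopy groups at $\un$, once the based loop space has been described.

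For the first assertion, a point of $\pic(A)$ is an invertible object of $\stinf(A)$ and a path component is an equivalence class of such; passing to the homotopy category $\st(A)$ turns equivalences into isomorphisms, so $\pi_0 \pic(A)$ is exactly the Picard group of $\st(A)$.

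For the remaining two points, the crucial computation is that of the based loop space. Because $\pic(A)$ is a full sub-$\infty$-groupoid of the core of $\stinf(A)$ containing $\un$, the space $\Omega_\un \pic(A)$ identifies with the space of self-equivalences of $\un$ in $\stinf(A)$. Since $\stinf(A)$ is stable symmetric monoidal, the mapping object $\hom_A(\un,\un)$ is a connective $E_\infty$-ring spectrum whose underlying space $\Omega^\infty \hom_A(\un,\un)$ is the space of endomorphisms of $\un$, and the self-equivalences form the subspace $\hom_A(\un,\un)^\times$ consisting of the components lying over units in $\pi_0$. I will then invoke the standard fact that for any connective $E_\infty$-ring $R$, the inclusion $R^\times \hookrightarrow \Omega^\infty R$ is an inclusion of connected components, and hence induces an isomorphism on $\pi_k$ for every $k \geq 1$.

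Combining these inputs, for $i \geq 1$ one has $\pi_i \pic(A) = \pi_{i-1} \Omega_\un \pic(A) = \pi_{i-1} \hom_A(\un,\un)^\times$, and I conclude as follows: for $i=1$ this is $\pi_0 \hom_A(\un,\un)^\times$, which is by definition the group $\gm_A(\un)$ of automorphisms of $\un$ in $\st(A)$; for $i \geq 2$ it agrees with $\pi_{i-1} \hom_A(\un,\un)$ by the general fact on units just recalled. The only point of verification of any substance is the identification $\Omega_\un \pic(A) \simeq \hom_A(\un,\un)^\times$, which is formal from the definition of $\pic(A)$ as a full $\infty$-subcategory together with the standard presentation of the endomorphism spectrum in a stable $\infty$-category, so no significant obstacle is expected.
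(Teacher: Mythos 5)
Your argument is correct and matches the approach the paper takes implicitly: Proposition \ref{pro:pipic} is stated without proof, and the surrounding Remark \ref{rk:picdeloopingaut} records exactly your key step, namely that $\pic(\C)$ based at $\un$ deloops the space of self-equivalences $\gm_{\C}(\un)$, which sits inside $\End_{\C}(\un)$ as a union of connected components. Nothing further is needed.
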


The groups $\pi_{i-1}(\hom_A(\un,\un))$ are interpreted in terms of extension groups in $\st(A)$. Therefore, these are computable.

Since a totalization is a limit, one applies the limit preserving functor $\pic$ to the result of Theorem \ref{thm:totalization}, one has

\begin{thm*}[{Theorem \ref{thm:picardss}}]
There is a weak equivalence of spaces
$$ \pic(A) \cong Tot\left(  \xymatrix{\pic(Mod_{\stinf(A)}(T))
  \ar@<-.5ex>[r] \ar@<.5ex>[r] & \pic(Mod_{\stinf(A)}(T\otimes T)) \ar@<-.7ex>[r] \ar@<-.0ex>[r] \ar@<.7ex>[r]
& \hdots}\right) .$$
\end{thm*}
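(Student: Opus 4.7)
The plan is to interpret the claimed equivalence as a formal consequence of two inputs: the decomposition of $\stinf(A)$ as a totalization provided by Theorem \ref{thm:totalization}, together with the general fact that the Picard-space functor $\pic$ commutes with limits of symmetric monoidal stable $\infty$-categories.

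First, I would promote the cosimplicial diagram produced by Theorem \ref{thm:totalization} to a cosimplicial object in the $\infty$-category of presentable symmetric monoidal stable $\infty$-categories and colimit-preserving symmetric monoidal functors. Each term $\mod_{\stinf(A)}(T^{\otimes(\bullet+1)})$ inherits a symmetric monoidal structure from the $E_\infty$-algebra structure on $T^{\otimes(\bullet+1)}$, and the coface and codegeneracy maps, being induced by the unit and multiplication of the algebra $T$, are naturally symmetric monoidal. The totalization computed in this enhanced $\infty$-category agrees with the underlying totalization of $\infty$-categories, so the equivalence of Theorem \ref{thm:totalization} lifts to an equivalence in the symmetric monoidal setting.

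Second, I would invoke (or quickly verify) the standard fact that $\pic$, viewed as a functor from presentable symmetric monoidal stable $\infty$-categories to spaces, preserves all small limits. The reason is that invertibility is a finite-limit condition, detected by the existence of a dual with evaluation and coevaluation maps satisfying the usual relations; consequently an object in $\lim_i \C_i$ is invertible if and only if its image in each $\C_i$ is invertible, and the mapping spaces between such invertibles are computed pointwise. Applying this to the cosimplicial object above yields
\[
\pic(\stinf(A)) \simeq \pic\bigl(Tot\, \mod_{\stinf(A)}(T^{\otimes(\bullet+1)})\bigr) \simeq Tot\, \pic\bigl(\mod_{\stinf(A)}(T^{\otimes(\bullet+1)})\bigr),
\]
which is the claimed equivalence of Picard spaces.

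The main obstacle is the first step, namely ensuring that the totalization of Theorem \ref{thm:totalization} really lives in the $\infty$-category of symmetric monoidal stable $\infty$-categories so that $\pic$ can be applied objectwise, and verifying that the limit computed there coincides with the underlying limit of $\infty$-categories. Once this enhancement is available, preservation of limits by $\pic$ is formal and no further computation is required.
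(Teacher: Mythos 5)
Your proposal is correct and follows essentially the same route as the paper: the proof there consists precisely of combining Theorem \ref{thm:totalization} (which already produces the totalization as a diagram of stable symmetric monoidal $\infty$-categories) with Proposition \ref{pro:piclimits}, the fact that $\pic$ preserves limits (cited from \cite{MS14}). The only difference is that you sketch a justification of the limit-preservation of $\pic$ where the paper simply invokes the reference.
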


The associated Bousfield-Kan spectral sequence computes the homotopy groups of the Picard space. To avoid complications, we state this spectral sequence, given in all generality in Corollary \ref{cor:sspicard}, in a particular case.

\begin{cor}[Corollary \ref{cor:sspicard}]
Suppose that $A$ has exactly one maximal quasi-elementary Hopf subalgebra $E$. Then, there is a spectral sequence 
$$E_1^{n,s} = \pi_s(\pic(Mod_{\stinf(E)(T^{\otimes n})})) \Rightarrow \pi_{s-n}(\pic(A))$$
computing the homotopy groups of the Picard space.

Its $r$th differential has bidegree $(r,r-1)$, and this spectral sequence converges for $s-n \geq 0$.
\end{cor}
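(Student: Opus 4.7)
The plan is to derive this corollary directly from Theorem \ref{thm:picardss} by feeding the associated cosimplicial space into the standard Bousfield--Kan spectral sequence. First, the hypothesis that $A$ admits a unique maximal quasi-elementary Hopf subalgebra $E$ allows me to invoke the second formulation in Theorem \ref{thm:totalization}: the cosimplicial diagram of $\infty$-categories simplifies so that its $0$-th level is $\stinf(E)$ and its $n$-th level is $\mathrm{Mod}_{\stinf(E)}(T^{\otimes n})$. Applying the space-valued Picard functor $\pic$ term by term, and using that $\pic$ preserves this totalization by Theorem \ref{thm:picardss}, yields
$$\pic(A) \simeq \mathrm{Tot}\bigl(n \mapsto \pic(\mathrm{Mod}_{\stinf(E)}(T^{\otimes n}))\bigr).$$

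Next, I would feed this cosimplicial pointed space into the Bousfield--Kan spectral sequence. For any cosimplicial pointed space $X^\bullet$ whose levels are sufficiently nice (in particular infinite loop spaces), the resulting spectral sequence has $E_1^{n,s} = \pi_s(X^n)$, differentials $d_r$ of bidegree $(r, r-1)$, and abuts to $\pi_{s-n}(\mathrm{Tot}(X^\bullet))$. Substituting $X^n = \pic(\mathrm{Mod}_{\stinf(E)}(T^{\otimes n}))$ gives exactly the $E_1$-page and bidegree stated in the corollary, with no further reindexing required.

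The delicate step is convergence. The Picard space of a stable symmetric monoidal $\infty$-category is a grouplike $E_\infty$-space, hence the $0$-th space of a connective spectrum, so above degree zero each level of the cosimplicial space is an infinite loop space. The standard convergence theorems for the Bousfield--Kan spectral sequence of a cosimplicial pointed space then give strong convergence to $\pi_{s-n}(\pic(A))$ in the region $s - n \geq 0$, with the usual fringe phenomena blocking stronger statements at lower total degree. I expect this convergence check to be the main technical point: it will require tracking the Milnor $\lim{}^1$ exact sequences, verifying that the Picard components do not obstruct the spectral sequence, and quoting the appropriate convergence criterion (for example from Bousfield's treatment of the cohomology spectral sequence of a cosimplicial space, or from the analysis of Picard spectra of Mathew--Stojanoska).
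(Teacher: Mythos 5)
Your proposal is correct and follows essentially the same route as the paper: apply the Bousfield--Kan spectral sequence to the cosimplicial space of Picard spaces produced by Theorem \ref{thm:picardss}, with the unit providing the base point in each layer so that the spectral sequence can be run. Your additional remarks on convergence (grouplike $E_\infty$-structure, $\lim^1$ issues) elaborate on a point the paper's proof leaves implicit, but do not change the argument.
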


This spectral sequence is an algebraic version of the spectral sequence considered first in \cite{MNN15}, and studied also in \cite{MS14}.

 This spectral sequence gives the Picard group of the stable category of $A$-modules from the Picard groups of the stable categories of $E$-modules, where $E$ runs through the collection of quasi-elementary Hopf subalgebras of $A$. This is our main computational tool for Picard groups.

We then turn to the second, and more difficult question of building a module knowing only its restrictions to quasi-elementary sub-Hopf algebras $E$ of $A$. When the Hopf algebra $A$ has only one quasi-elementary sub-Hopf algebra $E$, this is a lifting problem: given an $E$-module $M$, how many $A$-module whose restriction to $E$ is isomorphic to $M$ are there? It turns out that there is a homotopy theoretic refinement of this question, which is the space of lifts, whose $\pi_0$ is the set of classes of such lifts. Again, the nice $\infty$-categorical behaviour of this construction with respect to homotopy limits enables us to produce a machinery computing the homotopy groups of this space of lifts, by application of Theorem \ref{thm:totalization}.

\begin{cor*}[{Corollary \ref{thm:liftss}}]

Let $M_{\E}\in\stinf(\E)$. Let
\begin{equation}
E_1(M_{\E})^{s,n} = N^n(\pi_{s}\L_{\mod_{\stinf(A)}(T^{\bullet+1})}(M_{\E})).
\end{equation}
There are  obstruction classes to the lift problem in $E_1^{s,0,s+1}$ for each $s \geq 1$. In particular, $M_{\E}$ is liftable whenever these groups are zero.

Suppose that $M_{\E}$ is a liftable module, and let $M \in \stinf(A)$ be such a lift. There is a Bousfield-Kan spectral sequence
$$E_1(M_{\E})^{s,n} \Rightarrow \pi_{s-n}( \L_{\stinf(A)}(M_{\E}),M).$$

This spectral sequence is called the spectral sequence for the space of lifts.
\end{cor*}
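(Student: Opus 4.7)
The plan is to apply Theorem \ref{thm:totalization} after observing that the space-of-lifts construction preserves totalizations, and then invoke the Bousfield-Kan machinery for cosimplicial spaces.

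First, for any functor $F : \C \to \D$ of $\infty$-categories and any object $N \in \D$, let $\L_{\C}(N)$ denote the homotopy fiber over $N$ of the induced map of underlying cores $\C^{\simeq} \to \D^{\simeq}$. Since cores of $\infty$-categories and homotopy fibers of spaces both preserve limits, $\L_{-}(N)$ sends limits of $\infty$-categories over $\D$ to limits of spaces. Applied to the equivalence of Theorem \ref{thm:totalization} and to the $\E$-module $M_{\E}$, this yields a weak equivalence of spaces
$$\L_{\stinf(A)}(M_{\E}) \simeq \mathrm{Tot}\left( \L_{\mod_{\stinf(A)}(T^{\bullet+1})}(M_{\E}) \right).$$

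Next, I would feed the right-hand cosimplicial space into the Bousfield-Kan spectral sequence. Once a global lift $M \in \stinf(A)$ has been chosen, its images in each cosimplicial degree provide compatible basepoints, and the associated Bousfield-Kan spectral sequence reads
$$E_1^{s,n} = N^n \pi_s\!\left( \L_{\mod_{\stinf(A)}(T^{\bullet+1})}(M_{\E}) \right) \Rightarrow \pi_{s-n}\!\left( \L_{\stinf(A)}(M_{\E}), M \right),$$
with $N^{\bullet}$ the normalized cochain complex of the cosimplicial group. Combined with the previous equivalence, this is the spectral sequence claimed in the statement.

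If no global lift has yet been chosen, one runs the fringed Bousfield-Kan obstruction theory on the same cosimplicial space: the cosimplicial-degree-zero object $M_{\E}$ always lifts to $\mathrm{Tot}_0$, and the successive obstructions to extending a partial lift on $\mathrm{Tot}_s$ to $\mathrm{Tot}_{s+1}$ are the classes on the fringe of the $E_1$-page at the positions indicated in the statement (refined by the later differentials). The main difficulty is carefully justifying the limit-preservation of $\L_{-}(M_{\E})$ in the correct $\infty$-categorical sense and handling the fringe of the Bousfield-Kan spectral sequence near $s=0$; once these technical points are settled, the spectral sequence and its obstruction-theoretic interpretation follow from the standard Bousfield-Kan formalism applied to Theorem \ref{thm:totalization}.
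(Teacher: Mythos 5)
Your argument follows the same route as the paper's: Proposition \ref{pro:liftlimits} establishes that the lifts functor preserves limits, Theorem \ref{cor:lifttot} applies this to the totalization of Theorem \ref{thm:totalization}, and the corollary is then the associated Bousfield--Kan spectral sequence together with its fringed obstruction theory, exactly as you describe. The one substantive discrepancy is your definition of $\L_{\C}(M_{\E})$ as the homotopy fiber of $\iota\C\rightarrow\iota\stinf(\E)$ over the \emph{point} $M_{\E}$. Definition \ref{de:lifts} instead takes the core of the pullback of $\C\rightarrow\stinf(\E)$ along the full subcategory $[M_{\E}]$; since the core is a right adjoint, this is $\iota\C\times_{\iota\stinf(\E)}\iota[M_{\E}]$, a pullback over the whole component $B\gm_{\stinf(\E)}(M_{\E})$ rather than over a point. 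Your space is the fiber of the paper's space over $B\gm_{\stinf(\E)}(M_{\E})$, and the two genuinely differ: for the paper's space one has $\pi_1(\L_{\stinf(A)}(M_{\E}),M)\cong\gm_{\stinf(A)}(M)$ (as computed in the example following Proposition \ref{pro:liftlimits}), whereas for yours $\pi_1$ is a relative automorphism group, and your $\pi_0$ classifies lifts equipped with a chosen trivialization $U_{\E}M\simeq M_{\E}$ rather than lifts up to equivalence. Both functors preserve limits, so your spectral sequence exists, but it converges to the homotopy groups of a different space than the one asserted, and its $E_1$-page would not match the identifications used later (Proposition \ref{pro:comparisonliftendm}). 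The fix is cosmetic: keep the paper's definition and observe that pullbacks of $\infty$-categories and the core functor each preserve limits, which is all your argument actually needs.
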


Note that in this case, we obtain an obstruction theory for such lifting problem for free.

Finally, we study some multiplicative structures on these spectral sequences. Although we do not need these for our purposes, we believe that these are essential in more advanced computations.

We then set up a version of the Cartan Eilenberg spectral sequence 

$$E_1^{s,t,n}(\End) \Rightarrow \pi_{s,t}(\End_{\stinf(A)}(\un))$$
and an analogous spectral sequence 
$$E_1^{s,t,n}(\gm) \Rightarrow \pi_{s,t}(\gm_{\stinf(A)}(\un)$$
using the same techniques as $E_1(\pic)$.  There is a comparison between these spectral sequences that helps us understand the spectral sequence for the picard group from $E_1(\End)$ given by Proposition \ref{pro:comparison}. Moreover, we can give an extra-structure to the latter.

\begin{pro*}[{Proposition \ref{pro:pairinginss}}]
Let $\C$ be a pointed stable symmetric monoidal $\infty$-category  with distinguished object $M_{\C}$.
There are pairings
\begin{equation*}
\End_{\C}(\un) \wedge \End_{\C}(M_{\C}) \rightarrow \End_{\C}(M_{\C})
\end{equation*}
and 
\begin{equation*}
\gm_{\C}(\un) \wedge \gm_{\C}(M_{\C}) \rightarrow \gm_{\C}(M_{\C}).
\end{equation*}
These induces pairings of spectral sequences
\begin{equation*}
E_1^{s,t,n}(\End) \otimes E_1^{s',t',n'}(\End(M)) \rightarrow E_1^{s+s',t+t',n+n'}(\End(M))
\end{equation*}
and
\begin{equation*}
E_1^{s,n}(\gm) \otimes E_1^{s',n'}(\gm(M)) \rightarrow E_1^{s+s',n+n'}(\gm(M))
\end{equation*}
converging to the evident pairing in homotopy.
\end{pro*}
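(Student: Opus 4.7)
The plan is to construct the pairings at the level of a general symmetric monoidal $\infty$-category $\C$ first, then propagate them through the descent cosimplicial diagram of Theorem \ref{thm:totalization}, and finally invoke the standard functoriality of Bousfield--Kan spectral sequences under pairings of cosimplicial spectra.

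First, in any pointed symmetric monoidal $\infty$-category $(\C, \otimes, \un)$ with distinguished object $M_\C$, the tensor product gives a natural pairing
\begin{equation*}
\End_\C(X) \wedge \End_\C(Y) \longrightarrow \End_\C(X \otimes Y), \qquad (f,g) \longmapsto f \otimes g.
\end{equation*}
Specializing to $X = \un$, $Y = M_\C$ and composing with the left unit isomorphism $\un \otimes M_\C \simeq M_\C$ yields the pairing on endomorphism spectra. Restricting to units, and using that the tensor product of equivalences is an equivalence, produces the pairing on $\gm$. These make $\End_\C(M_\C)$ (resp.\ $\gm_\C(M_\C)$) a module over the $\mathbb{E}_\infty$-ring spectrum $\End_\C(\un)$ (resp.\ the grouplike $\mathbb{E}_\infty$-space $\gm_\C(\un)$).

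Next, apply this construction level-wise in the cosimplicial diagram of symmetric monoidal $\infty$-categories $\mod_{\stinf(A)}(T^{\otimes \bullet +1})$ provided by Theorem \ref{thm:totalization}, with distinguished object the image of $M$ under the base-change functors. Since every coface and codegeneracy map is symmetric monoidal, these pairings assemble into a pairing of cosimplicial spectra, and likewise of cosimplicial grouplike spaces. Applying $\mathrm{Tot}$ yields the claimed pairing at the level of the totalizations, which, by Theorem \ref{thm:totalization} identified with the corresponding invariants of $\stinf(A)$, recovers the evident pairing in $\stinf(A)$ itself.

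Finally, a pairing of cosimplicial spectra induces a pairing of bidegree zero on the associated Bousfield--Kan spectral sequences, converging to the induced pairing in homotopy; this is classical and already used in the multiplicative structures on descent spectral sequences in \cite{MNN15, MS14}. The description of $E_1$ as normalized cochains on the cosimplicial homotopy groups then makes the pairing on $E_1$ computable directly from the level-wise construction, giving the indicated bidegrees in $(s,t,n)$. The main obstacle is the coherence required in the second step: one must verify that the pairings assemble into a genuine pairing of cosimplicial objects in spectra, rather than merely a compatible family of pointwise pairings. This is most cleanly resolved by packaging $\End$ and $\gm$ as functors from pointed symmetric monoidal $\infty$-categories to $\mathbb{E}_\infty$-ring spectra, respectively grouplike $\mathbb{E}_\infty$-spaces, together with their module analogues, so that all the required coherences are built in by construction.
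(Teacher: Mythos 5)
Your proposal is correct and follows essentially the same route as the paper, whose own proof is a one-line appeal to the general theory of pairings in Bousfield--Kan spectral sequences; you have simply made explicit the intended construction of the pairing (tensor product of endomorphisms composed with the unitor $\un \otimes M_{\C} \simeq M_{\C}$, restricted to cores for $\gm$) and its levelwise propagation through the cosimplicial diagram of Theorem \ref{thm:totalization}. Your remark that the coherence of the levelwise pairings is the real content, best handled by packaging $\End$ and $\gm$ as functors to $\mathbb{E}_\infty$-rings and their modules, is exactly the point the paper leaves implicit.
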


This gives a pairing which helps to understand the differentials for the spectral sequences for endomorphisms, which can in turn be compared to the spectral sequence for the space of lifts.

We end the paper by an application to the case when $A = \A(1)$, the sub-Hopf algebra of the modulo $2$ Steenrod algebra generated by the two first Steenrod squares $Sq^1$ and $Sq^2$.

We give a new computation of the well-known Picard group of the stable category of $\A(1)$-modules,

\begin{thm*}[{Theorem \ref{thm:pica1}}]
$$Pic(\A(1)) = \Z \oplus \Z \oplus \Z/2.$$
\end{thm*}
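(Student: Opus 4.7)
The plan is to apply the Picard spectral sequence of Corollary \ref{cor:sspicard} directly to $A = \A(1)$. First I identify the relevant quasi-elementary sub-Hopf algebra: since $\A(1)$ is a sub-Hopf algebra of the Steenrod algebra, its quasi-elementary subalgebras coincide with its exterior sub-Hopf algebras (Palmieri's theorem recalled in the introduction), and the unique maximal such one is $E = E[Q_0, Q_1]$, the exterior algebra on the Milnor primitives $Q_0 = Sq^1$ and $Q_1$. This places us in the single-$E$ case of the corollary and yields a spectral sequence
\begin{equation*}
E_1^{n,s} = \pi_s\bigl(\pic(\mod_{\stinf(E)}(T^{\otimes n}))\bigr) \Rightarrow \pi_{s-n}(\pic(\A(1)))
\end{equation*}
converging for $s-n \geq 0$; the target $\Pic(\A(1)) = \pi_0(\pic(\A(1)))$ is built from the diagonal terms $E_\infty^{n,n}$.

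Next I would compute the relevant portion of the $E_1$-page. By Proposition \ref{pro:pipic}, each column splits into a Picard group in degree zero, the automorphism group of the unit in degree one, and stable $\ext$-groups in higher degrees. For $E = E[Q_0, Q_1]$ the stable category is classical: $\ext_E(\un,\un) = \F[q_0, q_1]$ in a natural bigrading, and $\pi_0(\pic(\stinf(E)))$ is free abelian, generated by the internal degree shift and the Heller translate $\Omega$. The higher cosimplicial levels $\mod_{\stinf(E)}(T^{\otimes n})$ and the coface maps $d^i$ are controlled by the explicit description of $T$ in Definition \ref{de:defT}; an unwinding identifies them with the natural restriction/corestriction operations among the exterior sub-Hopf algebras appearing in $\A(1)^{\otimes (n+1)}$.

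Then I would read off the associated graded of $\pi_0(\pic(\A(1)))$. The two $\Z$ summands come from $E_\infty^{0,0}$: both the internal degree shift and the Heller translate $\Omega$ lift to genuine $\A(1)$-modules whose restrictions to $E$ are the corresponding invertible $E$-modules, so they automatically equalize the two cofaces $\pi_0(\pic(\stinf(E))) \rightrightarrows \pi_0(\pic(\mod_{\stinf(E)}(T\otimes T)))$. The $\Z/2$ summand should be detected at $E_\infty^{1,1}$, arising as a surviving class in $\pi_1(\pic)$ at the first cosimplicial level, corresponding to the joker $J$, the self-dual five-dimensional $\A(1)$-module satisfying $J\otimes J \cong \un$ up to invertible shifts.

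The main difficulty is controlling the higher differentials and filtration extensions. The two $\Z$-classes on the zero line are permanent cycles for formal reasons, but the joker class at position $(1,1)$ must be shown to be a $d_r$-cycle for all $r \geq 1$. This is where the multiplicative comparison of Proposition \ref{pro:pairinginss} becomes decisive: it reduces these differentials to the parallel spectral sequence for $\End(\un)$, whose $E_1$-page is expressible through the classical cobar complex associated to the Hopf-algebra extension $E \hookrightarrow \A(1)$ and is therefore amenable to direct computation. The extension problem is then straightforward: the $\Z/2$ class sits in strictly positive descent filtration while the $\Z$-generators sit in filtration zero, and $\Z/2$ cannot extend non-trivially into a torsion-free quotient, forcing the associated graded to split. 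Exhibiting explicit modules realizing the three generators (an internal degree shift $\Sigma \un$, the augmentation ideal $\Omega \un$, and the joker $J$) then confirms $\Pic(\A(1)) = \Z \oplus \Z \oplus \Z/2$.
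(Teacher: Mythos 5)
Your plan is essentially the paper's own argument: run the descent spectral sequence of Corollary \ref{cor:sspicard} over the unique maximal quasi-elementary subalgebra $\E(1)$, bound the diagonal by $\Z \oplus \Z \oplus \Z/2$ using the comparison with the $\End_{(-)}(\un)$ spectral sequence, and use the joker to show the $\Z/2$ class survives. The only slip is a citation: the tool that transports differentials from the endomorphism spectral sequence to the Picard one is Proposition \ref{pro:comparison}, not the pairing of Proposition \ref{pro:pairinginss}.
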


and we answer the lifting problem for two extreme cases in  $\E(1)$-modules. Although the inspection is tedious, the tools provided in this paper are sufficient to solve the lifting problem in this case, for any $\E(1)$-module.

This last question was already studied by Geoffrey Powell in \cite{Pow15}. In \textit{loc cit}, he is able to compute by hand the number of lifts of each indecomposable $\E(1)$-module.

The author is aware that Eric Wofsey has obtained results in a similar direction in an unpublished work. \\

The organization of the paper should now be clear from the introduction.

\tableofcontents

\part{Generalities about the stable $\infty$-category of modules}

\section{The stable $\infty$-category}

Let $A$ be a Hopf algebra. Classically, one builds the stable category $\st(A)$ of $A$-modules from the category $_A\mod$ by setting the maps that factors through a projective $A$-module to be zero. In particular, it forces the projective modules to be contractible in $\st(A)$.

In this section, we consider the stable category of $A$-modules as an $\infty$-category. We will see later that this category is actually a stable symmetric monoidal $\infty$-category. Although this structure does translate quite easily to elementary properties of the classical stable category of $A$-modules $\st(A)$, this observation is the starting point of our analysis. Indeed, we will profit from these $\infty$-categorical structures by using homotopy theoretical tools in representation theory of Hopf algebras.

We start by some recollection about graded modules over graded Hopf algebras.

\begin{pro}[\emph{\cite[Theorem 12.5, Proposition 12.8]{Mar83}}]
The category ${_A\mod}$ has enough injectives and projectives. Moreover, the notions of free, injective, projective and flat module coincide.
\end{pro}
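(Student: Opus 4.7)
The argument rests on two structural facts about $A$: as a finite-dimensional Hopf algebra, $A$ is a Frobenius algebra (the Larson--Sweedler theorem), and as a graded connected algebra, $A$ is graded local, with unique maximal graded ideal the augmentation ideal $I = \ker(\epsilon)$.

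For enough projectives, the action map $A \otimes_{\F} M \twoheadrightarrow M$ exhibits every module as a quotient of a free one, and free modules are projective. For enough injectives, the cofree functor $V \mapsto \Hom_{\F}(A, V)$ is right adjoint to the exact forgetful functor ${_{A}\mod} \to {_{\F}\mod}$, hence preserves injectives; since every graded $\F$-vector space is injective, $\Hom_{\F}(A, M)$ is an injective $A$-module, and the unit $M \to \Hom_{\F}(A, M)$ of this adjunction is a functorial injection.

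The core of the statement is the four-way equivalence. Free $\Rightarrow$ projective $\Rightarrow$ flat is standard. For projective $\Rightarrow$ free, I would lift a graded $\F$-basis of $P/IP$ to a map $F \to P$ from a free module; the graded Nakayama lemma makes this surjective, and projectivity gives a splitting whose kernel $K$ satisfies $K = IK$, which forces $K = 0$ by a degreewise induction since $I$ is concentrated in positive degrees. For the equivalence projective $\Leftrightarrow$ injective, one uses the Frobenius property $A \cong A^*$ (up to a twist by the modular function, which is harmless here) to see that $A$ is self-injective; since $A$ is graded Noetherian, arbitrary direct sums of injectives are injective by the graded Bass--Papp theorem, so every free module is injective. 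For the converse, every injective decomposes as a direct sum of injective hulls of simples, but over a connected graded Hopf algebra the only graded simples are shifts of $\un$ and their injective hulls are the corresponding shifts of $A$, so every injective is free. Finally, flat $\Rightarrow$ projective uses that $A$ is Noetherian together with the graded-local structure: a flat graded module is free by a degreewise Nakayama argument.

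The main subtlety is the identification flat $=$ projective for modules that are not finitely generated, which fails over a general Noetherian ring. One must genuinely exploit the graded connected local structure; once the standard restriction to bounded-below, locally finite-dimensional graded modules is in force (as in Margolis), this reduces to tractable linear-algebra bookkeeping degree by degree.
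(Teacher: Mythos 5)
The paper offers no proof of this proposition --- it is quoted verbatim from Margolis \cite[Theorem 12.5, Proposition 12.8]{Mar83} --- so there is nothing internal to compare against; your outline is a correct reconstruction of the standard argument (Frobenius self-injectivity, graded Nakayama, Matlis-type decomposition of injectives into hulls of the unique graded simple). One simplification you could exploit: since $A$ is finite dimensional and connected, the augmentation ideal $I$ is nilpotent, so $A$ is a left and right perfect ring; hence flat $\Rightarrow$ projective is immediate from Bass's theorem, and the Nakayama steps ($K=IK \Rightarrow K=0$) hold for arbitrary graded modules with no bounded-below or locally finite hypothesis --- the ``degreewise bookkeeping'' you flag as the main subtlety is only genuinely needed in Margolis's more general setting of infinite-dimensional $P$-algebras, not here.
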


We now fix some notations to take into account the fact that our objects are graded.

\begin{de}
For an $A$-module $M \in {_A\mod}$, we denote the $\F$-vector space of homogeneous elements of $M$ in degree $k$ by $M_k$.
There is a shift functor, that sends $M$ to $M[1]$, defined by $(M[1])_{k+1} = M_k$. This is clearly an invertible functor, and we denote by $(-)[t]$, for $t \in \Z$ the obvious $|t|$-fold composition.
\end{de}

The reader should be warned about this shift functor: it has nothing to do with the stable structure mentioned in the introduction. It is a simple regrading, and its importance in this paper is only for computational purposes, as the internal grading helps to organize data (for example extension groups are bigraded), and to obtain additional structure (the various rings that appear in our context usually come from graded rings).

The category of $A$-modules also comes with a closed symmetric monoidal structure, induced by the (cocommutative) coproduct on $A$. The interested reader is referred to \cite{Mar83} or \cite{Pal01} for the details of the construction.

\begin{de}
Let $M,N$ be two $A$-modules. Let $M \otimes N$ be the tensor product (over $\F$) of $M$ and $N$, together with the action of $A$ given by
\begin{equation}
a(m \otimes n) = \Sigma a'm \otimes a''n
\end{equation}
in Swedler's notations.
Let $\hom_A(-,-)$ be the internal $\hom$ functor which is part of this closed symmetric monoidal structure.
\end{de}

We now recall the construction of the stable category of $A$-modules as a symmetric monoidal model category.

\begin{de}
We define the following classes of morphisms in $_A\mod$. An morphism $f$ in the category of $A$-modules is
 \begin{enumerate}
\item a stable equivalence, if both its kernel and cokernel are free $A$-modules,
\item a cofibration, if it is an injection,
\item a fibration if it is a surjection.
\end{enumerate}
\end{de}

Note that the shearing isomorphism ensures that for any $A$-module $M$, the $A$-module $A \otimes M$ is free, so that the bifunctor $\otimes$ sends free modules to free modules. Consequently, $\otimes$ is compatible with the above classes of maps. This gives:

\begin{pro}[\emph{ \cite[Exemple 2.4.(v)]{SS03}}] \label{pro:stablecat}
The classes of stable equivalences, cofibrations, and fibrations define a symmetric monoidal closed model structure on the category of $A$-modules. 
\end{pro}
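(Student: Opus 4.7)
The plan is to exhibit $_A\mod$ as a Frobenius exact category and apply the general construction of a model structure on such categories, then verify the monoidal compatibility using the shearing isomorphism recalled just above the statement.

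The first step is to observe that the preceding proposition identifies the projective, injective, free, and flat modules in $_A\mod$. Combined with the existence of enough projectives and injectives, and taking the exact structure to be given by all short exact sequences, this exhibits $_A\mod$ as a Frobenius exact category. For any Frobenius category there is a classical model structure (going back to Quillen and Happel, and formalized in the cited reference of Schwede--Shipley) in which the cofibrations are the monomorphisms, the fibrations are the epimorphisms, and the weak equivalences are the morphisms whose kernel and cokernel are projective. Invoking this directly yields the non-monoidal part of the proposition: the factorization axioms come from embedding a map into a short exact sequence with free middle term, the lifting axioms reduce to projectivity/injectivity of free $A$-modules, and the $2$-out-of-$3$ property follows from the snake lemma applied to the induced exact sequence of kernels and cokernels.

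Next, I would verify the pushout-product axiom for $\otimes$. Since the tensor product is computed underneath as a tensor product over the field $\F$, the bifunctor $\otimes$ is exact in each variable, so tensoring with any $A$-module preserves injections, and hence sends cofibrations to cofibrations. The genuine content is the compatibility with weak equivalences: for cofibrations $i: X \to Y$ and $j: Z \to W$ one has a canonical identification $\coker(i \square j) \cong \coker(i) \otimes \coker(j)$, where $i \square j$ denotes the pushout-product. By the shearing isomorphism recalled just before the statement, the tensor product of any $A$-module with a free $A$-module is again free; so if at least one of $i$ or $j$ is a trivial cofibration (hence has free cokernel), then $\coker(i \square j)$ is free and $i \square j$ is a trivial cofibration. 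This is the pushout-product axiom. The unit axiom is automatic since every object is cofibrant ($0 \hookrightarrow M$ is an injection), so in particular $\un$ is. Together with the internal $\hom$ recalled above, this upgrades to the closed symmetric monoidal model structure claimed.

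The only real obstacle is bookkeeping around the internal grading: cokernels, short exact sequences, and the internal $\hom$ must all be respected levelwise, and one needs to check that the general Frobenius-category construction genuinely applies in the graded setting with the specific descriptions of cofibrations, fibrations, and weak equivalences. Since every ingredient extends degreewise and the shearing isomorphism holds at the level of graded $A$-modules, this is routine.
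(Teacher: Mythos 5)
Your proposal follows the route the paper itself intends but does not write out: the paper's entire justification is the citation to Schwede--Shipley together with the preceding remark that the shearing isomorphism makes $F\otimes M$ free whenever $F$ is, so that $\otimes$ is compatible with the three classes of maps. Your identification of ${_A}\mod$ as a Frobenius category via the preceding proposition, and your verification of the pushout-product axiom via $\coker(i\,\square\, j)\cong\coker(i)\otimes\coker(j)$ together with the shearing isomorphism, are exactly the two halves of that intended argument, and both are correct as far as they go.

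There is, however, one step that fails as written: the two-out-of-three axiom for the class of maps whose kernel and cokernel are free, which you assert follows from the snake lemma. That class is in fact \emph{not} closed under two-out-of-three. Indeed, for any nonzero free module $F$ the map $0\to F$ has free kernel and cokernel; so if $g\colon F\to F'$ is any morphism of nonzero free modules, both $0\to F$ and the composite $0\to F'$ lie in the class, and two-out-of-three would force $g$ to lie in it as well. But taking $g\colon A\to A[-|A|]$ to be the augmentation followed by the inclusion of the socle, the kernel of $g$ is the augmentation ideal and the cokernel is $A[-|A|]$ modulo its socle, neither of which is free once $A\neq\F$. The snake lemma does not rescue this because the class of free modules, while closed under extensions and under summands, is not closed under the images occurring in the six-term sequence. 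The repair --- and what the cited reference actually does --- is to take the weak equivalences to be the maps that become isomorphisms in the stable category (equivalently, the two-out-of-three saturation of your class); the maps with free kernel and cokernel still identify the acyclic cofibrations as the injections with free cokernel and the acyclic fibrations as the surjections with free kernel, and the remainder of your argument, including the factorizations, the liftings and the monoidal part, goes through unchanged. This imprecision is already present in the paper's own definition of stable equivalence, so you inherited it rather than introduced it, but the specific claim that two-out-of-three holds for the smaller class is false and needs this adjustment.
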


Moreover, there is a stability property for this model category, in the sense of Hovey-Palmieri-Strickland.

\begin{pro}[\emph{ \cite[Section 9.6]{HPS} }] \label{pro:hpsstable}
The category $\st(A)$ is a stable homotopy category in the sense of Hovey-Palmieri-Strickland. In particular, it is a triangulated closed symmetric monoidal category generated by the compact objects, $\un[t]$. \\
 
Precisely, \begin{enumerate}
\item the exact triangles are induced by short exact sequences of $A$-modules,
\item the desuspension functor  is   $\Omega := \ker(A \rightarrow \un) \otimes (-)$,
\item the symmetric closed monoidal structure is induced by  $\otimes$, the internal hom, and the unit is $\un$.
\end{enumerate}
\end{pro}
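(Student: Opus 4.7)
The plan is to verify the Hovey--Palmieri--Strickland axioms for a stable homotopy category, using the stable model structure of the previous proposition as input, and to match each piece of data with the explicit formulas claimed.

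For the triangulation: any short exact sequence $0 \to M' \to M \to M'' \to 0$ of $A$-modules is a cofibration followed by projection onto its cokernel, hence a cofiber sequence in the stable model structure, so by the general theory of pointed model categories it produces a distinguished triangle $M' \to M \to M'' \to \Sigma M'$. For the desuspension formula, I tensor the short exact sequence $0 \to \ker(A \to \un) \to A \to \un \to 0$ over $\F$ with $M$, using the diagonal to make the result $A$-linear. This yields a short exact sequence $0 \to \ker(A \to \un) \otimes M \to A \otimes M \to M \to 0$ in which $A \otimes M$ is free by the shearing isomorphism, hence zero in $\st(A)$. The outer terms are therefore related by a suspension, so $\Omega M \cong \ker(A \to \un) \otimes M$ and $\Sigma$ is invertible, showing that the homotopy category is stable.

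Next, I check that the closed symmetric monoidal structure descends: shearing implies that $-\otimes-$ sends free modules to free modules and hence preserves stable equivalences, while cocommutativity of the coproduct gives symmetry. The internal hom $\hom_A(-,-)$ likewise descends because free modules are both projective and injective, so $\hom_A$ sends free modules to free modules in either variable. Compatibility between the monoidal structure and the triangulation is automatic from the exactness of $- \otimes N$ for a fixed module $N$, together with the preservation of freeness.

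Finally, for compact generation by $\{\un[t]\}_{t \in \Z}$: any nonzero object of $\st(A)$ is non-free, so admits a nonzero element in some degree $t$, giving a nontrivial map $\un[t] \to M$ in $\st(A)$; and any map $\un[t] \to \bigoplus_i M_i$ has image supported on finitely many summands, which after a standard argument yields compactness. The main obstacle, as I see it, is not any single difficult computation but the coherent organization of all the HPS axioms --- in particular confirming that the model-theoretic triangulation matches the naive one coming from short exact sequences, and that $\{\un[t]\}_{t \in \Z}$ is a generating set in the precise HPS sense. The whole argument is essentially the transport of HPS Section 9.6 to this concrete setting.
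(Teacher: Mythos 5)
The paper does not prove this proposition at all: it is quoted verbatim from \cite[Section 9.6]{HPS}, so there is no internal argument to compare against. Your sketch is a reasonable reconstruction of the HPS argument, and the parts concerning the triangulation from short exact sequences, the identification $\Omega M \cong \ker(A\to\un)\otimes M$ via the free module $A\otimes M$ and the shearing isomorphism, and the descent of the closed symmetric monoidal structure (using that free $=$ projective $=$ injective, as recalled from Margolis earlier in the paper) are all correct and are exactly how one verifies the axioms.

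The one step that does not hold up as written is the generation claim. You assert that a nonzero (hence non-free) object $M$ of $\st(A)$ ``admits a nonzero element in some degree $t$, giving a nontrivial map $\un[t]\to M$ in $\st(A)$.'' A nonzero element of $M$ does not give an $A$-module map out of $\un[t]$ unless it is annihilated by the augmentation ideal $I$ of $A$; and even a nonzero socle element produces a module map that may very well factor through a projective and hence be zero in $\st(A)$. So the detection statement you want --- that the $\un[t]$ and their (de)suspensions see every nonzero object --- is precisely the nontrivial content here and cannot be dispatched in one line. The standard repair (and what HPS do) is to use that $A$ is finite dimensional and connected, so $I$ is nilpotent, say $I^N=0$; the filtration $M\supseteq IM\supseteq\cdots\supseteq I^NM=0$ has trivial subquotients, i.e.\ coproducts of shifts of $\un$, exhibiting every module as a finite iterated extension of objects in the localizing subcategory generated by $\{\un[t]\}_{t\in\Z}$. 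This gives generation directly, without needing the (delicate) detection statement. With that substitution your outline is complete; the compactness of $\un[t]$ itself is fine, though you should note that one also needs maps-modulo-projectives to commute with coproducts, which again uses finite dimensionality.
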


\begin{de}
We denote by $\stinf(A)$ the corresponding presentable $\infty$-category.
\end{de}

\begin{cor}[\emph{Corrolary of Propositions \ref{pro:stablecat} and \ref{pro:hpsstable}}]\label{de:loop}\label{lemma:comparisonofloops}
The $\infty$-category $\stinf(A)$ is a presentable symmetric monoidal stable $\infty$-category, generated by $\un[t]$, for $t \in \Z$. Moreover, the triangulated desuspension is induced by the functor $\Omega$.
\end{cor}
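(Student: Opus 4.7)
The plan is to deduce this corollary formally by passing from the symmetric monoidal stable model structure of Propositions \ref{pro:stablecat} and \ref{pro:hpsstable} to its underlying $\infty$-category, using Lurie's machinery. First I would check that the model structure on ${_A}\mod$ is combinatorial: it is cofibrantly generated with generating (trivial) cofibrations built from the injections between finitely generated graded $A$-modules, whose sources and targets are accessible. Localizing the nerve at the stable equivalences therefore produces a presentable $\infty$-category in the sense of Lurie (HTT, Proposition A.3.7.6), which we take as the definition of $\stinf(A)$.

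Second, since the model structure is symmetric monoidal closed (Proposition \ref{pro:stablecat}) and every $A$-module is in particular cofibrant, the tensor product is already a left Quillen bifunctor with no need to take cofibrant replacements. Invoking Lurie's construction (Higher Algebra, Proposition 4.1.7.4 and Theorem 4.5.2.1), the underlying $\infty$-category $\stinf(A)$ inherits a symmetric monoidal structure whose tensor bifunctor descends from $\otimes$ and whose unit is $\un$. Presentability is then compatible with this monoidal structure in the sense that $\otimes$ commutes with all colimits in each variable.

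For stability, I would appeal to Proposition \ref{pro:hpsstable}: the homotopy category $\st(A)$ is a triangulated category with invertible suspension given by $\Omega^{-1}$, where $\Omega = \ker(A\to\un)\otimes(-)$ is an equivalence. Since $\stinf(A)$ is a presentable $\infty$-category whose homotopy category is triangulated and whose loop endofunctor (computed from the underlying model category) agrees with $\Omega$, the criterion of Higher Algebra (Corollary 1.4.2.27, applied to the pointed presentable $\infty$-category $\stinf(A)$ whose suspension functor is an equivalence) shows that $\stinf(A)$ is stable, with intrinsic desuspension canonically identified with the functor $\Omega$ on $_A\mod$.

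Finally, for the generation statement, I would observe that the objects $\un[t]$ are compact in $\stinf(A)$ (they are $\omega$-compact since $\hom_A(\un[t],-)$ preserves filtered colimits of $A$-modules) and that they form a set of generators of $\st(A)$ by Proposition \ref{pro:hpsstable}. As a presentable stable $\infty$-category is generated by a set of compact objects if and only if the corresponding set generates the homotopy category as a triangulated category, this transfers to the $\infty$-categorical statement. The main (and really only) obstacle is being scrupulous about citing the correct statements from Higher Algebra that upgrade a symmetric monoidal stable combinatorial model category to a presentable symmetric monoidal stable $\infty$-category; once those pointers are in place the result is essentially a repackaging of Propositions \ref{pro:stablecat} and \ref{pro:hpsstable}.
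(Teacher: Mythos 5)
Your proposal is correct and follows exactly the route the paper intends: the paper offers no written proof, simply asserting the statement as a formal consequence of Propositions \ref{pro:stablecat} and \ref{pro:hpsstable}, and your argument is precisely that deduction made explicit (combinatoriality of the model structure gives presentability, the monoidal model structure with all objects cofibrant gives the symmetric monoidal $\infty$-structure, invertibility of $\Omega$ gives stability with the loop functor computed by $\ker(A\to\un)\otimes(-)$, and compact generation transfers from the triangulated homotopy category). No gaps; the only caveat is the usual one of pinning down the exact references in Higher Algebra, which you already flag.
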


Note that the morphisms between two object $M$ and $N$, \textit{i.e.} the elements of $\pi_0(\stinf(A)(M,N))$ are the stable classes of \emph{degree zero} $A$-module morphisms between $M$ and $N$. However, we define an enhancement of $\stinf(A)$ which takes into account the maps of degree non-zero in the following way.

\begin{de}
Let $t \in \Z$. Let $\stinf(A)(M,N)_t$ be the spectrum $\stinf(A)(M, N[t])$, and $\stinf(A)(M,N)_*$ be the coproduct of all $\stinf(A)(M,N)_t$.
\end{de}

In particular, this gives a natural definition of bigraded homotopy groups in this category. This is nothing but a topological manifestation of the fact that extension groups between graded modules over a graded ring are bigraded, by one homological degree and one degree coming from the internal grading of objects.

\begin{de}
\begin{equation} \label{eq:bigradedpi}
\pi_{s,t}(M,N) := \pi_s(\stinf(A)(M,N)_t)
\end{equation}
\end{de}

\section{Picard spectra, endomorphisms, automorphisms and  space of lifts}

In this section, we define the various $\infty$-functors we will need to set up our spectral sequences for the Picard space and the space of lifts. Because our strategy relies on general $\infty$-categorical constructions, we need to consider these functors for a general stable $\infty$-category $\C$, which satisfies the relevant hypothesis. However, when particularized to the symmetric monoidal stable $\infty$-category $\stinf(A)$, we recover the objects that are our main interest here.

\begin{de} \label{de:picardspace}
For $\C$ a stable symmetric monoidal $\infty$-category, let $\pic(\C)$ denote the $\infty$-groupoid of invertible objects in $\C$, equivalences between them, and all higher arrows. As $\pic(\C)$ is a group like $E_{\infty}$-space, it is the zeroth space of a connective $\Omega$-spectrum we denote $\pic(\C)$ as well.
If $\C = \stinf(A)$ for some Hopf algebra $A$, we simply denote this spectrum by $\pic(A)$.
\end{de}

The Picard spaces and spectra are studied through their relation to some other fundamental functors. Indeed, since the higher morphisms in $\pic(\C)$ consists in all the invertible morphisms, each connected component of the Picard space is a delooping of the automorphisms of the unit. We now define properly the relevant functors and natural transformations.

\begin{de}
Let $\End_{(-)}(\un) : \infty\dashmod \cat^{\otimes} \rightarrow \sh$, where $\sh$ is the stable homotopy category, and $\infty\dashmod \cat^{\otimes}$ is the category of stable symmetric monoidal $\infty$-categories, be the functor which associates to a stable symmetric monoidal $\infty$-category its spectrum of endomorphisms of the unit, and $\gm_{-}(\un) = \iota \End_{(-)}(\un)$ its core.
\end{de}

The Picard spectrum satisfies the following property, which is essential to our approach. Indeed, it expresses in particular the compatibility of Picard spectra with totalizations.

\begin{pro}[Proposition 2.2.3 \cite{MS14}] \label{pro:piclimits}
The functor
$$\pic : \infty\dashmod cat^{\otimes} \rightarrow \infty\dashmod groupoids$$
preserves limits and filtered colimits.
\end{pro}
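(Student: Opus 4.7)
The plan is to recognize $\pic(\C)$ as a union of connected components of the maximal $\infty$-subgroupoid (core) $\iota(\C)$ --- namely those spanned by $\otimes$-invertible objects --- and then bootstrap from well-known preservation properties of the core functor. First I would recall that the core functor $\iota : \infty\dashmod\cat^{\otimes} \to \infty\dashmod\mathrm{groupoids}$ preserves both limits and filtered colimits. Limit preservation is standard, as $\iota$ is right adjoint to the inclusion of $\infty$-groupoids into $\infty$-categories. Preservation of filtered colimits follows from the explicit model of filtered colimits of $\infty$-categories, which act both on objects and on mapping spaces, together with the fact that invertibility of a morphism is detected at the level of the mapping space.

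For the limits part, given a small diagram $\C_\bullet$ with limit $\C$, the equivalence $\iota(\C) \simeq \lim \iota(\C_i)$ restricts to an equivalence $\pic(\C) \simeq \lim \pic(\C_i)$ provided invertibility of an object $X \in \C$ is equivalent to invertibility of each projection $X_i \in \C_i$. The forward direction is immediate from the fact that each projection is symmetric monoidal, hence preserves invertible objects. For the converse, given a compatible family of inverses $Y_i$ with equivalences $\eta_i : X_i \otimes Y_i \simeq \un$, the universal property of the limit assembles the $Y_i$ into $Y \in \C$ and the $\eta_i$ into a witnessing equivalence $X \otimes Y \simeq \un$, since the symmetric monoidal structure on $\C$ is inherited componentwise.

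For filtered colimits, I would check that the natural map $\mathrm{colim}\,\pic(\C_i) \to \pic(\mathrm{colim}\,\C_i)$ is an equivalence. Full faithfulness on mapping spaces reduces to the analogous statement for $\iota$, since $\pic$ is a full subgroupoid of the core. Essential surjectivity is the delicate point: given an invertible $X \in \C = \mathrm{colim}\,\C_i$, by filtered-colimit preservation of $\iota$ it is equivalent to the image of some $X_i \in \C_i$; an inverse $Y$ lifts to some $Y_j$; after passing to a common later stage $k$, both live in $\C_k$, and the invertibility equivalence, being a point in a mapping space which is itself a filtered colimit, descends to an equivalence $X_l \otimes Y_l \simeq \un$ at some further stage $l \geq k$, exhibiting $X_l$ as invertible in $\C_l$.

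The main obstacle is the rigorous \emph{finite presentation} argument in the filtered colimit case: although an invertibility datum might appear to carry unbounded higher coherence, all that is required to certify an object as lying in $\pic$ is the data of an inverse object together with a single equivalence to the unit, and these together form a diagram of finite shape. Since filtered colimits of $\infty$-categories commute with finite limits, any such datum realized in the colimit must already be realized at some finite stage, which is what makes the lifting argument above go through.
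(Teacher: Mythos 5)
The paper offers no argument of its own here: Proposition \ref{pro:piclimits} is quoted directly from Mathew--Stojanoska \cite{MS14} (their Proposition 2.2.3), so the only meaningful comparison is with the proof given there. Your outline follows the same route as theirs: realize $\pic(\C)$ as the union of those connected components of the core $\iota(\C)$ spanned by the invertible objects, use that $\iota$ preserves limits and filtered colimits, and then check that invertibility of an object of a limit (respectively, of a filtered colimit) is detected on the components (respectively, at a finite stage). Your filtered colimit argument is sound: mapping spaces in a filtered colimit of $\infty$-categories are the filtered colimits of the mapping spaces, and the certificate of invertibility is a finite amount of data, so it descends to a finite stage.

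The one genuine soft spot is in the limit case, where you write ``given a compatible family of inverses $Y_i$.'' Compatibility is exactly what is not given: an object of $\lim_i \C_i$ is a \emph{coherent} diagram, and choosing an inverse $Y_i$ of $X_i$ separately for each $i$ only produces objects that agree up to noncanonical equivalence under the transition functors; the universal property of the limit does not assemble such a family into an object of $\C$ unless all higher coherences are supplied. Two standard repairs are available. Either observe that for an invertible $X_i$ the space of pairs $(Y_i,\eta_i\colon X_i\otimes Y_i\simeq\un)$ is contractible, so that these spaces form a diagram of contractible spaces whose limit --- the space of inverse data for $X$ in $\C$ --- is again contractible and in particular nonempty; or, more efficiently, note that $X$ is invertible if and only if $X\otimes(-)\colon\C\to\C$ is an equivalence, and a natural transformation of diagrams of $\infty$-categories that is an equivalence levelwise induces an equivalence on limits. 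Either patch completes your argument; as written, the step assumes the coherence it is supposed to produce.
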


As the name suggests, the Picard spectrum is an enhancement of the more classical Picard group, defined as follows.

\begin{de}
Let $(C,\otimes, \un)$ be a monoidal category (such that for example the homotopy category of a stable symmetric monoidal $\infty$-category). The Picard group of $C$, denoted $\Pic(C)$, is the class of isomorphism classes invertible objects with respect to the tensor product. Whenever $\Pic(C)$ is a set, it has a natural group structure, where the multiplication is induced by $\otimes$ and the unit is the isomorphism class of the unit.
\end{de}

\begin{rk} \label{rk:picdeloopingaut}
By definition, there is a natural equivalence $\tau_{\geq 1}\pic(\C) \steq \Omega \gm_{\C}(\un)$.
The latter is a subspace of the space of endomorphisms of the unit, giving a natural transformation $\gm_{\C}(\un) \rightarrow \End_{\C}(\un)$.
\end{rk}

\begin{pro} \label{pro:pipic}
Let $\C$ be an symmetric monoidal $\infty$-category. There are natural isomorphisms
\begin{enumerate}
\item between $\pi_0(\pic(C))$ and the Picard group of $ho(\C)$,
\item $\pi_1(\pic(C)) \cong \gm_{\C}(\mathbb{1})$,
\item $\pi_i(\pic(C)) \cong \pi_{i-1}(\hom_C(\un,\un))$, for $i \geq 2$.
\end{enumerate}
\end{pro}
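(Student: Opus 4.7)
The plan is to unpack the three identifications directly from the definition of $\pic(\C)$ together with Remark~\ref{rk:picdeloopingaut}, which already furnishes the key identification $\tau_{\geq 1}\pic(\C) \simeq \Omega\, \gm_{\C}(\un)$. Since $\pic(\C)$ is by construction the maximal $\infty$-subgroupoid of $\C$ spanned by $\otimes$-invertible objects, the three statements correspond respectively to (i) counting path components, (ii) computing loops based at the unit, and (iii) deriving higher loops from loops inside $\End_{\C}(\un)$.

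First, for part (1), I would observe that $\pi_{0}$ of any $\infty$-groupoid is its set of equivalence classes of objects. The equivalences in $\pic(\C)$ are the $\otimes$-invertible equivalences between $\otimes$-invertible objects in $\C$, so $\pi_{0}(\pic(\C))$ is the set of isomorphism classes of invertible objects in the homotopy category $ho(\C)$, with group structure inherited from $\otimes$. This is the Picard group of $ho(\C)$ by definition.

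Second, for (2) and (3), I would apply Remark~\ref{rk:picdeloopingaut}: the identity component based at $\un$ satisfies $\tau_{\geq 1}\pic(\C)\simeq \Omega\, \gm_{\C}(\un)$, and hence
\begin{equation*}
\pi_{i}(\pic(\C)) \cong \pi_{i-1}(\gm_{\C}(\un)) \qquad (i\geq 1).
\end{equation*}
For $i=1$, $\pi_{0}(\gm_{\C}(\un))$ is by definition the set of connected components of the core of $\End_{\C}(\un)$, i.e.\ the group of automorphisms of $\un$, establishing (2). For $i\geq 2$, I would invoke the fact that $\gm_{\C}(\un) \hookrightarrow \End_{\C}(\un)$ is an inclusion of connected components (the invertible ones): such inclusions are isomorphisms on all higher homotopy groups, so
\begin{equation*}
\pi_{i-1}(\gm_{\C}(\un)) \cong \pi_{i-1}(\End_{\C}(\un)) = \pi_{i-1}(\hom_{\C}(\un,\un)),
\end{equation*}
which gives (3).

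The steps are essentially bookkeeping of definitions, so I do not anticipate a substantial mathematical obstacle; the main care needed is in checking that the equivalence $\tau_{\geq 1}\pic(\C)\simeq \Omega\,\gm_{\C}(\un)$ from Remark~\ref{rk:picdeloopingaut} is genuinely available without circularity (it comes from the fact that a loop at $\un$ in $\pic(\C)$ is an invertible self-equivalence of $\un$, i.e.\ an element of $\gm_{\C}(\un)$, together with the standard $\Omega$-delooping of a grouplike $E_\infty$-space). Once this is granted, the identification of higher homotopy groups with those of $\End_{\C}(\un)$ reduces to the standard observation that the core functor only affects $\pi_{0}$.
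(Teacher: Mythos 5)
Your proposal is correct and is essentially the argument the paper intends: the paper gives no separate proof of Proposition \ref{pro:pipic}, treating all three items as immediate consequences of Definition \ref{de:picardspace} and Remark \ref{rk:picdeloopingaut}, which are exactly the two ingredients you unpack (components of the core give the Picard group of $ho(\C)$; loops at $\un$ give $\gm_{\C}(\un)$; the inclusion of the core into $\End_{\C}(\un)$ is an equivalence onto a union of components, hence an isomorphism on $\pi_{i}$ for $i\geq 1$). One small caution: the equivalence you quote from Remark \ref{rk:picdeloopingaut}, $\tau_{\geq 1}\pic(\C)\simeq\Omega\,\gm_{\C}(\un)$, would literally yield $\pi_i(\pic(\C))\cong\pi_{i+1}(\gm_{\C}(\un))$ rather than $\pi_{i-1}$; the correct relation --- which your closing paragraph in fact argues directly, so your displayed conclusion $\pi_i(\pic(\C))\cong\pi_{i-1}(\gm_{\C}(\un))$ for $i\geq 1$ stands --- is $\Omega\,\pic(\C)\simeq\gm_{\C}(\un)$, i.e.\ $\tau_{\geq 1}\pic(\C)$ is a delooping of $\gm_{\C}(\un)$, and the remark as printed appears to have the $\Omega$ on the wrong side.
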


The homotopy groups of $\gm$ and $\End$ will be expressed as certain extension groups in Corollary \ref{cor:pdforend}, using a form of Poincar\'e duality  in the stable category of modules over a Hopf algebra. At this point, we will have an entirely algebraic description of the higher homotopy groups of $\pic(A)$.

We finish this section by the study of the homotopical behaviour of the various functors we have introduced. Our main interest is the commutation with homotopy limits. This is essential for us since our strategy is to apply $\pic$, $\gm$, $\End$ to a totalization of a cosimplicial $\infty$-category.

\begin{lemma} \label{lemma:holimend}
The functors $\End_{(-)}(\un)$ and $\gm$ commutes with homotopy limits. In particular, these commutes with totalization.
\end{lemma}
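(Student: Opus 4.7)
The plan is to reduce both statements to the fact that mapping spectra in a limit of stable $\infty$-categories are computed as the limit of mapping spectra at each stage, and then to peel off the extra structure for $\gm$.

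First I would unwind the definition. For any stable symmetric monoidal $\infty$-category $\C$, the spectrum $\End_\C(\un)$ is by construction the mapping spectrum $\Hom_\C(\un,\un)$ in the stable $\infty$-categorical sense. Given a limit diagram $\C \simeq \lim_i \C_i$ in $\infty\dashmod\cat^{\otimes}$, the forgetful functor to stable $\infty$-categories (and further to $\infty$-categories) preserves limits, so $\C \simeq \lim_i \C_i$ at the level of the underlying $\infty$-category as well. The monoidal unit $\un_\C$ corresponds to the compatible family $(\un_{\C_i})_i$ because the structure maps of the diagram are symmetric monoidal.

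Next I would invoke the fact that mapping spaces (and hence, by stability, mapping spectra) in a limit $\infty$-category are the limits of the mapping spaces at each vertex. Applied to the pair $(\un,\un)$, this yields
\[
\End_\C(\un) \;=\; \Hom_\C(\un,\un) \;\simeq\; \lim_i \Hom_{\C_i}(\un,\un) \;=\; \lim_i \End_{\C_i}(\un),
\]
which gives the claim for $\End_{(-)}(\un)$. Totalization is a special case of a homotopy limit (over $\Delta$).

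For $\gm$, I would use the description $\gm_\C(\un) = \iota \End_\C(\un)$ as the core, i.e.\ the subspace of $\Omega^\infty \End_\C(\un)$ given by the connected components that are invertible with respect to composition. The claim then reduces to the fact that invertibility is detected pointwise in the diagram: a compatible family of endomorphisms $(f_i) \in \lim_i \pi_0 \End_{\C_i}(\un)$ admits an inverse in $\pi_0 \End_\C(\un)$ if and only if each $f_i$ is invertible, in which case the inverses $(f_i^{-1})$ assemble, by uniqueness of inverses, into a compatible family giving the inverse in the limit. Consequently the invertible components of $\lim_i \Omega^\infty \End_{\C_i}(\un)$ coincide with the limit of the invertible components, so $\gm_\C(\un) \simeq \lim_i \gm_{\C_i}(\un)$.

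The main obstacle I anticipate is not the core argument but the $\infty$-categorical bookkeeping: one must be careful that the symmetric monoidal structure, the stability, and the identification of the unit are all preserved under the limit, and that the mapping spectrum functor out of a limit of stable $\infty$-categories is really the limit of mapping spectra. Both facts are standard in the Lurie framework (the inclusion of stable symmetric monoidal $\infty$-categories into $\infty$-categories preserves limits, and $\Hom_{(-)}(-,-)$ sends limits in the target variable to limits of spectra), so I would cite these rather than reproving them.
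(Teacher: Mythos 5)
Your argument is correct and is essentially the paper's: the paper factors $\End_{(-)}(\un)$ through the full subcategory on the unit followed by the space of all maps, which is just a repackaging of your observation that mapping spectra in a limit of (stable, symmetric monoidal) $\infty$-categories are the limits of the mapping spectra at the units. Your treatment of $\gm$ via pointwise detection of invertibility likewise matches the paper's claim that passing to invertible components commutes with limits (which the paper elsewhere justifies by noting the core functor is a right adjoint).
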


\begin{proof}
The functor $\End_{(-)}(\un)$ can be written as a composite
\begin{equation*}
\infty\dashmod \cat^{\otimes} \overset{F}{\rightarrow}  \infty\dashmod\cat \rightarrow \Top
\end{equation*}
where $F$ takes symmetric monoidal $\infty$-category and sends it to its full sub-$\infty$-category with one object, $\un$. The second functor associates to an $\infty$-category the space of all its maps. These functors both commute with homotopy limits, so $\End$ does.

Now, the functor $\gm$ commutes with limits as the composite of the two functors $\End_{(-)}(\un)$ and taking only invertible connected components, both commuting with limits. 
\end{proof}

Lemma \ref{lemma:holimend} does give a particular role to the unit in any symmetric monoidal $\infty$-category. If our categories come with a distinguished object, we can Apply a similar argument.

\begin{lemma} \label{lemma:endMandlimits}
Let $\infty\dashmod \cat^{\bullet}$ be the category of pointed stable symmetric monoidal $\infty$-category, \textit{i.e.} categories $\C$ with a distinguished object $M_{\C}$. The functors
\begin{equation*}
\End : \infty\dashmod \cat^{\bullet} \rightarrow \sh,
\end{equation*}
and
\begin{equation*}
\gm : \infty\dashmod \cat^{\bullet} \rightarrow \sh,
\end{equation*}
which sends $\C$ to $\End_{\C}(M_{\C},M_{\C})$ and $\gm_{\C}(M_{\C})$ commutes with limits.
\end{lemma}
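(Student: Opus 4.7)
The plan is to adapt the factorization argument from Lemma \ref{lemma:holimend} to the pointed setting. Specifically, I would write $\End$ as the composite
\begin{equation*}
\infty\dashmod \cat^{\bullet} \overset{F^{\bullet}}{\rightarrow} \infty\dashmod \cat \rightarrow \Top,
\end{equation*}
where $F^{\bullet}$ sends a pointed stable symmetric monoidal $\infty$-category $(\C, M_{\C})$ to the full sub-$\infty$-category of $\C$ spanned by the single object $M_{\C}$, and the second functor is the mapping space functor on a one-object $\infty$-category. As already observed in the proof of Lemma \ref{lemma:holimend}, the second functor commutes with limits.

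For the first functor, I would use the description of limits in $\infty\dashmod \cat^{\bullet}$: a morphism $(\C, M_{\C}) \to (\D, M_{\D})$ consists of a symmetric monoidal exact functor $F : \C \to \D$ together with an equivalence $F(M_{\C}) \simeq M_{\D}$, so a limit $\lim_i (\C_i, M_{\C_i})$ has underlying $\infty$-category $\lim_i \C_i$ equipped with the coherent family $(M_{\C_i})_i$ as distinguished object. The forgetful functor to $\infty\dashmod \cat$ then commutes with limits by construction, and the passage to the full sub-$\infty$-category on the distinguished object is a pullback (along the inclusion of the distinguished point into $\C$), hence also commutes with limits. Composing yields the claim for $\End$.

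For $\gm$, I would observe as in Lemma \ref{lemma:holimend} that $\gm_{\C}(M_{\C})$ is the union of those connected components of $\End_{\C}(M_{\C})$ consisting of equivalences. Forming a limit of $E_1$-spaces and then restricting to invertible components agrees with first restricting to invertible components and then forming the limit, since an endomorphism in $\lim_i \C_i$ is invertible if and only if each of its projections is. This gives that $\gm$ also commutes with limits.

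The main point to check is the compatibility of the distinguished object with limits in $\infty\dashmod \cat^{\bullet}$, and that forming a full sub-$\infty$-category on a point is a pullback construction. Once this is granted, the argument reduces to the same two observations used in Lemma \ref{lemma:holimend}, so I do not expect any additional technical difficulty beyond keeping track of the base point data.
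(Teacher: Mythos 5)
Your proof is correct and follows essentially the same route as the paper: the paper's argument is exactly to rerun the factorization of Lemma \ref{lemma:holimend}, replacing $F$ by the functor sending $(\C, M_{\C})$ to the full sub-$\infty$-category on $M_{\C}$. Your additional remarks on how the distinguished object interacts with limits only make explicit what the paper leaves implicit.
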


\begin{proof}
The proof is along the same lines as Lemma \ref{lemma:holimend}, replacing the functor $F$ by the functor which sends a pointed stable symmetric monoidal $\infty$-category $\C$ to its full subcategory with only one object: $M_{\C}$.
\end{proof}

\section{Restriction functors}

In this section, we study and reformulate various restriction functors in the stable setting, defined as follows.

\begin{de} \label{de:adjforget}
Let $B$ be a Hopf subalgebra of $A$. Define
$$U_{B} : \stinf(A) \rightarrow \stinf(B)$$
to be the restriction functor.
\end{de}

 Conceptually, the central result of this section is Proposition \ref{pro:extofscalarshopf}, which reformulates $U_B$ as an extension of scalars along the commutative algebra object $T_B \in \stinf(A)$. This allows us to study the descent problem along $T_B$.

The following result is classical, see for instance \cite{Mar83,Pal01}.

\begin{pro}
The functor $U_B$ has both a left and a right adjoint. Namely,

$$A \otimes_B (-) : \stinf(B) \rightarrow \stinf(A)$$
is left adjoint to $U_B$, and
$$\hom_B(A ,-) : \stinf(B) \rightarrow \stinf(A)$$
is right adjoint to $U_B$,
where the action of $A$ on $A \otimes_B (-)$ and $\hom_B(A ,-)$ is induced by the action of $A$ on itself.
\end{pro}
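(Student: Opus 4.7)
The plan is to construct the adjunctions first at the level of the ordinary categories ${_A}\mod$ and ${_B}\mod$, then promote them to Quillen adjunctions for the stable model structure of Proposition \ref{pro:stablecat}, and finally pass to the presentable $\infty$-categories they present. At the abelian level, the triple $A\otimes_B(-) \dashv U_B \dashv \hom_B(A,-)$ is the standard induction-restriction-coinduction attached to the algebra extension $B \hookrightarrow A$; one writes down the natural bijections
\begin{align*}
\Hom_A(A\otimes_B M, N) &\cong \Hom_B(M, U_B N), \\
\Hom_A(N, \hom_B(A,M)) &\cong \Hom_B(U_B N, M),
\end{align*}
using the usual unit/counit formulas, and verifies that with the $A$-actions specified in the statement the unit and counit are $A$-linear. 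This step is book-keeping.

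The essential content is descent to the stable model structure. Each of the three functors is exact, so each preserves cofibrations (injections) and fibrations (surjections); what remains is to see that they preserve free modules, and hence the class of stable equivalences (those whose kernels and cokernels are free). I would invoke two structural facts about finite-dimensional connected cocommutative Hopf algebras over $\F$: first, $A$ is free as a left and as a right $B$-module, and second, every finite-dimensional Hopf algebra is Frobenius, so $\hom_B(A,B) \cong A$ as $A$-modules. The first fact immediately gives preservation of freeness by $A\otimes_B(-)$ and by $U_B$. Because $A$ is finite-dimensional, $\hom_B(A,-)$ commutes with arbitrary direct sums, yielding the natural isomorphism $\hom_B(A, B\otimes V) \cong \hom_B(A,B) \otimes V$; combined with the second fact this gives $\hom_B(A, B\otimes V) \cong A \otimes V$, so $\hom_B(A,-)$ preserves freeness as well.

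From here one obtains two Quillen adjunctions $(A\otimes_B(-), U_B)$ and $(U_B, \hom_B(A,-))$. A Quillen adjunction between combinatorial model categories induces an adjunction between the presentable $\infty$-categories they present, which immediately yields the two stated adjunctions in $\stinf$. The main obstacle, modest as it is, is the preservation of freeness by $\hom_B(A,-)$: this is where the Frobenius property of finite-dimensional Hopf algebras genuinely enters. Everything else is either routine adjunction formalism or a direct appeal to the standard facts about graded Hopf algebras recorded in \cite{Mar83,Pal01}.
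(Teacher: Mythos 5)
Your argument is correct, but it is worth noting that the paper does not prove this proposition at all: it simply records it as classical and points to \cite{Mar83,Pal01}, so what you have written is a genuine proof where the paper has only a citation. Your route --- set up the induction--restriction--coinduction triple at the abelian level, then check that all three functors preserve injections, surjections and free modules so that both adjunctions are Quillen adjunctions for the model structure of Proposition \ref{pro:stablecat}, and finally pass to underlying $\infty$-categories --- is the standard one, and you have correctly isolated the only non-formal input, namely the Milnor--Moore freeness of $A$ over $B$ and the preservation of freeness by $\hom_B(A,-)$. For that last point your appeal to the Frobenius property (coinduction agrees with induction up to a shift, so $\hom_B(A,B)\cong A$ up to regrading) works, though there is a slightly cheaper argument available from facts the paper already records: $\hom_B(A,-)$ is right adjoint to the exact functor $U_B$, hence preserves injectives, and injective coincides with free in ${_A\mod}$ by the paper's first proposition, so $\hom_B(A,B\otimes V)$ is automatically free without invoking the Frobenius isomorphism explicitly. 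Either way the proof is sound; just make sure to state the Frobenius isomorphism with its degree shift, since the shift functor is the only reason the two sides can literally agree as graded modules.
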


\begin{pro} \label{pro:ubmonoidal}
The functor $U_B$ is a symmetric monoidal functor. 
Moreover, the three functors $U_{B}, A\otimes_{B}$, and $\hom_B(A,-)$ are exact.
\end{pro}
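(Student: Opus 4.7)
The plan is to verify both claims at the level of the symmetric monoidal model category $({_A}\mod, \otimes, \un)$ from Proposition \ref{pro:stablecat}, and then transport the conclusions to the underlying stable $\infty$-category $\stinf(A)$.

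For the symmetric monoidal claim, the key input is that $B \subset A$ being a sub-Hopf algebra means the comultiplication $\Delta_A$ restricted along the inclusion $B \hookrightarrow A$ coincides with $\Delta_B$. Consequently, for $A$-modules $M$ and $N$, the Sweedler formula $a(m\otimes n) = \sum a'm \otimes a''n$ applied to $a \in B$ is exactly the defining formula for the $B$-action on $U_B(M) \otimes U_B(N)$, so that $U_B(M \otimes N) = U_B(M) \otimes U_B(N)$ on the nose, and $U_B(\un) = \un$. This promotes $U_B$ to a strict symmetric monoidal functor of symmetric monoidal abelian, and thus model, categories. To descend to $\stinf$, I then need $U_B$ to preserve stable equivalences, which reduces to showing it preserves free modules: this is the Nichols--Zoeller theorem, asserting that for a finite dimensional Hopf algebra $A$ and any sub-Hopf algebra $B$, $A$ is free as a $B$-module, so any free $A$-module restricts to a free $B$-module. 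Fibrations and cofibrations are preserved for trivial reasons, so these inputs combine to a strict symmetric monoidal functor $\stinf(A) \to \stinf(B)$ in the $\infty$-categorical sense.

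Exactness is then almost formal. By the preceding proposition the three functors $U_B$, $A \otimes_B (-)$ and $\hom_B(A,-)$ are all adjoints between the stable $\infty$-categories $\stinf(A)$ and $\stinf(B)$. Left adjoints between stable $\infty$-categories preserve finite colimits and right adjoints preserve finite limits, and in the stable setting either is equivalent to preservation of fiber/cofiber sequences, i.e.\ to exactness. Hence $A \otimes_B (-)$ is exact as a left adjoint, $\hom_B(A,-)$ is exact as a right adjoint, and $U_B$ is exact as both.

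The only step carrying genuine content is the preservation of stable equivalences by $U_B$, which rests entirely on the freeness of $A$ as a $B$-module; everything else is either a direct manipulation of Sweedler's coproduct formula or a formal consequence of the adjunctions just recorded.
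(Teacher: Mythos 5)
Your proof is correct, and the first half (the symmetric monoidal structure via Sweedler's formula and the compatibility of $\Delta_A|_B$ with $\Delta_B$) is essentially the paper's argument. Where you genuinely diverge is on exactness: the paper disposes of it in one line by observing that the tensor product is taken over the ground field $\F$ (so restriction, induction and coinduction are already exact at the level of abelian module categories, using that $A$ is free over $B$), whereas you deduce exactness formally from the adjunctions of the preceding proposition, using that left (resp.\ right) adjoints between stable $\infty$-categories preserve finite colimits (resp.\ limits) and that either condition is equivalent to exactness in the stable setting. Your route is cleaner and fully formal once the adjunctions are known at the $\infty$-categorical level, but it presupposes exactly that; the paper's module-level argument is what one would invoke to justify that the adjunctions descend to $\stinf$ in the first place. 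You also make explicit a point the paper leaves implicit, namely that $U_B$ preserves stable equivalences because $A$ is free as a $B$-module --- in the graded connected setting this is usually attributed to Milnor--Moore (and is the content of the cited \cite[Theorem 12.5]{Mar83}) rather than Nichols--Zoeller, but the ungraded finite-dimensional version you quote applies equally well here. Spelling this out is a genuine improvement in rigor over the paper's two-sentence proof.
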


\begin{proof}
As vector spaces, $U_B(M \otimes N)$ is clearely isomorphic to $U_B M \otimes U_B N$, since the tensor product is taken over $\F$. The action of $B$ coincides since $B$ is a Hopf subalgebra of $A$.

The second assertion is immediate since tensor product is taken over $\F$.
\end{proof}

\begin{rk}
The functors $ A\otimes_{B}$, and $F_B(A,-)$ are obviously not monoidal in general, for instance, they do not preserve the units.
\end{rk}

It will often be useful in our situation to consider a family of Hopf subalgebras of $A$ at once. This is done by the following global restriction functor.

\begin{de} \label{de:restrictions}
Let $\B$ be a collection of Hopf subalgebras of $A$.
Let
\begin{equation*}
U_{\B} : \stinf(A) \rightarrow \stinf(\B),
\end{equation*}
where $\stinf(\B) := \coprod_{B\in\B} \stinf(B)$, be the functor $\coprod_{B\in\B} U_B$.
\end{de}

Let $B$ be a Hopf subalgebra of $A$. In order to study the descent from $B$ to $A$ modules, we need to have a commutative ring object in $\stinf(A)$ that measures the difference between $\stinf(A)$ and $\stinf(B)$. 

The Hopf algebra $A$ is cocommutative. It implies that all its Hopf subalgebras are automatically conormal subcoalgebras. Consequently, the Hopf subalgebra $B$ of $A$ is conormal, and $A\sur B$ is a cocommutative coalgebra object in $\stinf(A)$.

\begin{de} \label{de:defT}
Let $B \subset A$ be a Hopf subalgebra.
Then  $A\sur B = A \otimes_B \F$ and the diagonal of $A$ induces a cocommutative coalgebra structure on it. In particular, the $\F$-linear dual $(A\sur B)^{\ast}$ of $A\sur B$ is a commutative algebra object in the category $\stinf(A)$.
We denote this commutative algebra by $T_B$.
\end{de}

\begin{lemma} \label{lemma:hombaconservative}
The functor $\hom_B(A,-) : \stinf(B) \rightarrow \stinf(A)$ is conservative.
\end{lemma}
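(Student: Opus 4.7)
My plan is to prove conservativity of $\hom_B(A,-)$ directly by showing that $\hom_B(A,M)\steq 0$ in $\stinf(A)$ forces $M\steq 0$ in $\stinf(B)$. Since $\hom_B(A,-)$ is an exact functor between stable $\infty$-categories, reflecting the zero object is equivalent to conservativity, so this is the only thing to verify.

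The key observation I would exploit is that $U_B$ preserves the compact generators furnished by Proposition \ref{pro:hpsstable}: one has $U_B(\un[t])=\un[t]$ for every $t\in\Z$, since $\un = \F$ carries the trivial action of both $A$ and $B$ and the regrading functor commutes with restriction. Combined with the spectrum-level adjunction $U_B \dashv \hom_B(A,-)$, this produces, for every $t\in\Z$, a natural equivalence of mapping spectra
\begin{equation*}
\stinf(B)(\un[t],M) \steq \stinf(A)(\un[t], \hom_B(A,M)).
\end{equation*}
Under the hypothesis that $\hom_B(A,M)\steq 0$, the right hand side is contractible, hence $\stinf(B)(\un[t],M)$ is contractible for every $t\in\Z$. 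Since $\{\un[t]\}_{t\in\Z}$ is a set of compact generators of $\stinf(B)$ by Proposition \ref{pro:hpsstable}, this forces $M\steq 0$ and finishes the argument.

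The proof is essentially formal; I do not anticipate any real obstacle. The only point to double-check is that the right adjoint $\hom_B(A,-)$ really does promote to an enriched adjunction of mapping spectra, but this is automatic for any adjunction between presentable stable $\infty$-categories. In particular, one does not need any finer input such as the Nichols--Zoeller theorem or a Milnor--Moore decomposition of $A$ as a $B$-module.
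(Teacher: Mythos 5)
Your argument is correct, but it takes a genuinely different route from the paper's. The paper argues entirely at the level of module categories: it reduces conservativity to the statement that if $\hom_B(A,M)$ is a free $A$-module then $M$ is a free $B$-module, and proves this by observing that $M$ is a $B$-module retract of $U_B(\hom_B(A,M))$, which is $B$-free whenever $\hom_B(A,M)$ is $A$-free (this uses that $A$ is free as a $B$-module). You instead run the formal $\infty$-categorical argument: an exact functor between stable $\infty$-categories is conservative as soon as it reflects zero objects, and the adjunction $U_B \dashv \hom_B(A,-)$ together with $U_B(\un[t])=\un[t]$ and compact generation of $\stinf(B)$ by the objects $\un[t]$ shows that $\hom_B(A,M)\steq 0$ forces $M\steq 0$. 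Your route is more formal and would apply verbatim to any right adjoint whose left adjoint preserves a set of compact generators; the paper's route is more elementary and makes the algebraic mechanism explicit (a retract of a free module is free). One small caveat: your closing remark that no input such as the freeness of $A$ over $B$ is needed is slightly optimistic, since that input is exactly what makes the adjunction descend from module categories to the stable $\infty$-categories in the first place; but as the paper supplies the stable-level adjunction as a citable statement, your proof is complete as written.
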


\begin{proof}
We argue at the level of the module categories for this result. Recall that a map in the stable category is a weak equivalence if and only if both its kernel and cokernel are free. Thus, it suffices to show that for any $B$-module $M$,  if $\hom_B(A,M)$ is a free $A$-module then $M$ is a free $B$-module. It is true since the morphism $M \rightarrow U_B(\hom_B(A,M)$ is split, and $U_B(\hom_B(A,M)$ is a free $B$-module.
\end{proof}

The last result is the key ingredient in the following proposition, which exhibits $B$-modules in $A$ as a category of modules over the commutative algebra object $T_B \in \stinf(A)$.

\begin{pro} \label{pro:extofscalarshopf}
The functors $U_{B}$ induces an equivalence of stable symmetric monoidal $\infty$-categories 
$$ \mod_{A}(T_B) \cong \stinf(B),$$
where $\mod_{A}(T_B)$ is the category of $T_B$-modules in $\stinf(A)$. Moreover, this equivalence of categories enters in a diagram
\begin{equation}
\xymatrix{ \stinf(A) \ar[r]^{U_B} \ar[d]_{T_B \otimes}  & \stinf(B) \ar@{-}[dl]^{\cong} \\
\mod_A(T_B), }
\end{equation}

commutative up to a natural isomorphism.
\end{pro}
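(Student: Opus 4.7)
The plan is to apply the Barr--Beck--Lurie monadicity theorem to the adjunction $U_B \dashv \hom_B(A, -)$ between $\stinf(A)$ and $\stinf(B)$, after identifying the resulting monad on $\stinf(A)$ with the free $T_B$-module functor $T_B \otimes -$. The crucial ingredient is a natural isomorphism of $A$-modules
\[
\hom_B(A, U_B M) \;\cong\; T_B \otimes M, \qquad M \in \stinf(A),
\]
where the $A$-action on the right is the diagonal one arising from the coproduct of $A$. This is an instance of the Hopf-algebraic projection (``shearing'') formula: the coproduct and antipode of $A$ give an isomorphism of left $B$-modules $A \cong B \otimes (A\sur B)$, and since $A\sur B$ is finite-dimensional over $\F$, one obtains $\hom_B(A, U_B M) \cong \hom_{\F}(A\sur B, M) \cong (A\sur B)^{*} \otimes M = T_B \otimes M$. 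I would then verify that this natural isomorphism is compatible with the monad structures: the unit $M \to \hom_B(A, U_B M)$ coming from the adjunction matches the unit $M \to T_B \otimes M$ coming from $\F \to T_B$, and similarly for the multiplication, via a Sweedler-style check with the Hopf axioms.

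Having identified the monad, I would apply Barr--Beck--Lurie (Lurie, HA 4.7.3.5) to the right adjoint $G = \hom_B(A, -)$. Conservativity is exactly Lemma \ref{lemma:hombaconservative}. Moreover, $A$ is finite free as a $B$-module (Milnor--Moore in the graded connected setting), hence dualizable in $\stinf(B)$, so $\hom_B(A, -) \simeq A^{\vee}\otimes -$ is itself a left adjoint and preserves arbitrary colimits, in particular geometric realizations. BBL therefore produces an equivalence of stable $\infty$-categories $\stinf(B) \simeq \mod_{\stinf(A)}(T_B)$, under which the BBL comparison functor $N \mapsto \hom_B(A, N)$, equipped with its $T_B$-action, is inverse to the equivalence.

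For the symmetric monoidal enhancement, I would use that $U_B$ is symmetric monoidal by Proposition \ref{pro:ubmonoidal}, so its right adjoint $\hom_B(A, -)$ is lax symmetric monoidal; this equips $T_B = \hom_B(A, \F)$ with a commutative algebra structure that matches the one of Definition \ref{de:defT}. Lurie's symmetric monoidal version of monadicity then upgrades the equivalence to one of symmetric monoidal stable $\infty$-categories. Commutativity of the triangle in the statement is immediate from the key computation: the composite $\stinf(A) \xrightarrow{U_B} \stinf(B) \xrightarrow{\simeq} \mod_{\stinf(A)}(T_B)$ sends $M$ to $\hom_B(A, U_B M) \cong T_B \otimes M$, the free $T_B$-module on $M$. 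The main obstacle is the monad identification in the first step; while the underlying isomorphism is classical, verifying compatibility with units and multiplications is where the Hopf-algebra structure is essentially used, and the remainder of the argument is then a formal consequence of $\infty$-categorical monadicity applied to this input.
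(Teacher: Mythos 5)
Your proposal is correct and follows essentially the same route as the paper: the paper simply packages your argument by invoking Proposition 5.29 of \cite{MNN15}, whose hypotheses (monoidality of $U_B$, the projection formula $\hom_B(A,N)\otimes M \simeq \hom_B(A,N\otimes U_B M)$, colimit-preservation and conservativity of $\hom_B(A,-)$, the latter being exactly Lemma \ref{lemma:hombaconservative}) are precisely the inputs you feed into Barr--Beck--Lurie by hand. Your explicit identification $\hom_B(A,U_BM)\cong T_B\otimes M$ via the shearing isomorphism is the same monad identification that result performs internally, so the two proofs differ only in how much of the monadicity machinery is left as a black box.
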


\begin{proof}
We check the hypothesis of Proposition 5.29 \cite{MNN15} for the adjunction
$$ U_B : \stinf(A) \rightleftarrows \stinf(B) : \hom_{\stinf(B)}(A,-)$$
of Definition \ref{de:adjforget}. Namely, we need to show that\begin{itemize}
\item the functor $U_B$ is monoidal,
\item for all objects $M \in \stinf(A)$, $N \in \stinf(B)$, the map
$$ \hom_B(A,N) \otimes M \rightarrow \hom_B(A, (N \otimes U_B(M))$$ is a stable equivalence,
\item the functor $\hom_B(A,-)$ commutes with colimits,
\item the functor $\hom_B(A,-)$ is conservative.
\end{itemize}

The fact that $U_B$ is monoidal is given by Proposition \ref{pro:ubmonoidal}. \\

The algebras $A$ and $B$ are finite dimensional. In particular $A$ is a compact $B$-module. Thus, for all $M \in \stinf(A)$ and $N \in \stinf(B)$, the morphism
$$\hom_B(A,N) \otimes M \rightarrow \hom_B(A, N \otimes M)$$
is a stable equivalence, and $\hom_B(A,-)$ commutes with colimits.

The projection formula holds since the category $\stinf(B)$ is generated by the strongly dualizable objects $\un[k]$.  \\

Finally, $\hom_B(A,-)$ is conservative by Lemma \ref{lemma:hombaconservative}.
\end{proof}

The last proposition is one key point for the study of the restriction functors via descent theory. However, in our arguments, we will often need to consider a restriction functor of the form $U_{\B} : \stinf(A) \rightarrow \coprod_{\B} \stinf(B)$, as in Definition \ref{de:restrictions}. To this end, we use the following consequence of Proposition \ref{pro:extofscalarshopf}.

\begin{cor} \label{cor:assemblyextension}
Let $T_{\B}$ be the algebra $\prod_{B \in \B} T_B$. Then there is an equivalence of stable monoidal $\infty$-categories 
$$ \mod_{\stinf(A)}(T_{\B}) \cong \stinf(\B),$$
where $\mod_{\stinf(A)}(T_{\B})$ is the category of $T_{\B}$-modules in $\stinf(A)$. Moreover, this equivalence of categories enters in a diagram
\begin{equation}
\xymatrix{ \stinf(A) \ar[r]^{U_{\B}} \ar[d]_{T_{\B} \otimes}  & \stinf({\B}) \ar@{-}[dl]^{\cong} \\
\mod(T_{\B}), }
\end{equation}

commutative up to a natural isomorphism.

\end{cor}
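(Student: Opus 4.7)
The strategy is to bootstrap Proposition \ref{pro:extofscalarshopf} from a single Hopf subalgebra to the full collection $\B$, by exploiting the idempotent decomposition of $T_{\B}$ as a product. Since $A$ is finite dimensional, the collection $\B$ (as well as each $T_B$) is finite, so $T_{\B} = \prod_{B \in \B} T_B$ is a finite product of commutative algebra objects in the stable symmetric monoidal $\infty$-category $\stinf(A)$. The unit of $T_{\B}$ decomposes as a sum of pairwise orthogonal central idempotents $\{e_B\}_{B \in \B}$, one for each projection $T_{\B} \to T_B$.

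Next, I use this decomposition to produce a canonical equivalence of stable symmetric monoidal $\infty$-categories
\begin{equation*}
\mod_{\stinf(A)}(T_{\B}) \simeq \prod_{B \in \B} \mod_{\stinf(A)}(T_B),
\end{equation*}
sending a $T_{\B}$-module $M$ to the tuple $(e_B \cdot M)_{B \in \B}$, with inverse given by the sum. Since the $\infty$-category of stable presentable $\infty$-categories is semiadditive, finite products and finite coproducts agree, so this right-hand side matches the convention $\coprod_{B \in \B}$ used in Definition \ref{de:restrictions}.

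I then apply Proposition \ref{pro:extofscalarshopf} to each factor: for each $B \in \B$ there is a monoidal equivalence $\mod_{\stinf(A)}(T_B) \simeq \stinf(B)$ sitting in a triangle that intertwines $T_B \otimes (-)$ with $U_B$. Assembling the factorwise equivalences yields the desired equivalence
\begin{equation*}
\mod_{\stinf(A)}(T_{\B}) \simeq \coprod_{B \in \B} \stinf(B) = \stinf(\B).
\end{equation*}
Commutativity of the triangle in the statement then follows by assembling the triangles from Proposition \ref{pro:extofscalarshopf}: the $B$-component of $T_{\B} \otimes M$ (extracted via $e_B$) is $T_B \otimes M$, which is identified with $U_B(M)$ factorwise.

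The only mildly delicate point is justifying the idempotent splitting of module categories in the $\infty$-categorical setting, but this is a standard consequence of the fact that the monoidal structure on $\stinf(A)$ preserves finite products of algebras, together with the universal property of modules over a product. Nothing in this step uses anything specific to the Hopf-algebraic origin of $T_{\B}$; all the content is already in Proposition \ref{pro:extofscalarshopf}, and the present corollary is purely a bookkeeping extension.
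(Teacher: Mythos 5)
Your argument is correct and coincides with what the paper intends: the paper states Corollary \ref{cor:assemblyextension} as an immediate consequence of Proposition \ref{pro:extofscalarshopf} without writing a proof, and the splitting $\mod_{\stinf(A)}(T_{\B}) \simeq \prod_{B\in\B}\mod_{\stinf(A)}(T_B)$ via the orthogonal idempotents of the finite product $T_{\B}=\prod_B T_B$, followed by a factorwise application of the proposition, is exactly the expected bookkeeping. The only point worth flagging is that the paper's $\coprod_{B\in\B}\stinf(B)$ should indeed be read as the product of the $\stinf(B)$ for your identification to match Definition \ref{de:restrictions}, which your semiadditivity remark addresses.
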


\section{Homotopy groups, extension groups and Poincar\'e duality} \label{sec:poincare}

This section is devoted to the algebraic interpretation of the homotopy groups of the morphism spaces. As the reader probably expects, and as the expert probably knows, these are related to extension groups in the stable category.

Here, we recollect the two essential ingredients needed to prove this fact. Namely the identification of $\pi_{s,t}(\stinf(A)(M,N))$ with $\ext_A^{-s,t}(M,N)$, which is given by Proposition \ref{pro:piiextgroups}, and the Poincar\'e duality result for such extension groups, provided by Proposition \ref{pro:poincare}.

Recall that, in the stable category, the extension groups $\ext^{s,t}_A$ are defined for all integers $s,t$.

\begin{pro} \label{pro:piiextgroups}
There is a natural isomorphism
\begin{equation*}
\pi_{s,t}(\stinf(A)(M, N)) \cong \ext_{A}^{-s,t}(M,N).
\end{equation*}
\end{pro}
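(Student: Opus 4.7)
The plan is to unpack both sides and match them through the triangulated structure on $\st(A)$, which is the homotopy category of $\stinf(A)$. By the definition of $\pi_{s,t}$ recorded just before the statement, the claim amounts to showing that $\pi_s(\stinf(A)(M, N[t])) \cong \ext_A^{-s,t}(M,N)$, so what needs to be done is to reduce the $\infty$-categorical homotopy groups of the mapping spectrum to classical extension groups in the stable module category.

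The argument I would write proceeds in three steps. First, since $\stinf(A)$ is the presentable $\infty$-category underlying the symmetric monoidal model category of Proposition~\ref{pro:stablecat}, its homotopy category is precisely $\st(A)$; in particular $\pi_0\stinf(A)(X, Y) = \Hom_{\st(A)}(X, Y)$ for any $X, Y$. Second, for arbitrary $s \in \Z$, the general theory of mapping spectra in a stable $\infty$-category provides a natural isomorphism
\[ \pi_s \stinf(A)(X, Y) \;\cong\; \Hom_{\st(A)}(\Sigma^s X, Y), \]
where $\Sigma$ denotes the $\infty$-categorical suspension of $\stinf(A)$; applied to $X = M$ and $Y = N[t]$ this yields
\[ \pi_{s,t}(\stinf(A)(M,N)) \;=\; \pi_s\stinf(A)(M, N[t]) \;\cong\; \Hom_{\st(A)}(\Sigma^s M, N[t]). \]
Third, one matches this with $\ext_A^{-s,t}(M,N)$ via the standard triangulated convention $\ext^n(X, Y) = \Hom_{\st(A)}(X, \Sigma^n Y) = \Hom_{\st(A)}(\Sigma^{-n} X, Y)$, so that the right-hand side equals $\ext^{-s}(M, N[t]) = \ext^{-s,t}(M,N)$, the internal grading shift $[t]$ contributing the second index.

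The only nontrivial point, and what I would call the main obstacle, is the compatibility of the two notions of suspension: one needs the $\infty$-categorical suspension $\Sigma$ of $\stinf(A)$ to agree, after passage to the homotopy category, with the triangulated suspension used to define Ext in $\st(A)$. This is handled by the general principle that the homotopy category of a stable $\infty$-category is canonically triangulated with shift induced by the $\infty$-categorical suspension, combined with the explicit description $\Omega = \ker(A \to \un) \otimes (-)$ recorded in Proposition~\ref{pro:hpsstable} and Corollary~\ref{lemma:comparisonofloops}. Once this identification is in hand the proposition is a direct translation of definitions, and the only remaining care to take is the bookkeeping around signs and the internal grading; I do not anticipate any genuine difficulty beyond that.
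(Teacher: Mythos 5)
Your proposal is correct and follows essentially the same route as the paper: identify $\pi_{0,t}$ of the mapping spectrum with morphisms in $\st(A)$, shift the homotopy degree into the source using the agreement of the $\infty$-categorical (de)suspension with $\Omega = \ker(A \to \un) \otimes(-)$ from Corollary~\ref{lemma:comparisonofloops}, and conclude by the definition of stable Ext. If anything you are slightly more careful than the paper, which writes $\st(A)(\Omega^s M, N)_t$ where your $\Hom_{\st(A)}(\Sigma^s M, N[t])$ is the sign-consistent form under the usual convention $\ext^n(M,N)=\Hom_{\st(A)}(\Omega^n M,N)$.
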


\begin{proof}
Clearly, $\pi_{0,t}(\stinf(A)(M,N)) = \st(A)(M,N)_t$, the set of all stable module morphisms between $M$ and $N$. But  we know by Lemma \ref{lemma:comparisonofloops} that the functor $\Omega$ coincides with the desuspension in the stable symmetric monoidal $\infty$-category $\stinf(A)$. Therefore 
\begin{eqnarray*}
\pi_{s,t}(\stinf(A)(M,N)) & =  & \pi_{0,t}(\stinf(A)(\Omega^s M,N)) \\
  & \cong & \st(A)(\Omega^s M,N)_t  \\
& \cong & \ext^{-s,t}_A(M,N).
\end{eqnarray*}
\end{proof}

Unfortunately, the computations that are usually available for extension groups are done in the unstable category (\textit{i.e.} $\ext^{s,t}$ for $s \geq 0$). To remedy to this problem, we show a Poincar\'e duality result. In particular, the computation of the $s \geq 0$ part and the $s \leq -1$ part of extension groups are essentially equivalent. This sort of computations is classical, see for example \cite[Chapter 12]{Mar83}.

\begin{de}
Let $|A|$ be the biggest degree in which $A$ is non-zero.
\end{de}

In particular, the non-degenerate pairing $A \otimes A \rightarrow A \rightarrow \un[|A|]$, where the first map is the multiplication in $A$, and the second map projects on the top class, induces an isomorphism of $A$-modules $A^{\ast} \cong A[-|A|]$, where $A^{\ast}$ is the $\F$-linear dual of $A$.

\begin{rk} \label{rk:dualizeresolutions}
As a consequence of this isomorphism is that one can obtain a projective resolution of an $A$-module $M$ by dualizing an injective resolution for $M^{\ast}$, and vice versa. This gives the following result.
\end{rk}

\begin{pro} \label{pro:poincare}
There is a natural isomorphism
\begin{equation*}
\ext_A^{s,t}(\un,\un) \cong  \ext_A^{-1-s,|A|-t}(\F,\F)  .
\end{equation*}
\end{pro}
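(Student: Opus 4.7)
Following Remark \ref{rk:dualizeresolutions}, the strategy is to build a complete (Tate) resolution of $\un$ by splicing a free resolution with its $\F$-linear dual, and then use the Poincar\'e duality isomorphism $A^{\ast} \cong A[-|A|]$ to identify the two halves of the resulting Ext-complex.

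First I would fix a minimal free resolution $\ldots \to P_1 \to P_0 \to \un \to 0$, writing $P_s = A \otimes V_s$ for some graded $\F$-vector space $V_s$. Applying the $\F$-linear dual $(-)^{\ast}$ is exact (all modules in sight are finite-dimensional) and yields an exact sequence
\begin{equation*}
0 \to \un \to P_0^{\ast} \to P_1^{\ast} \to \ldots
\end{equation*}
which by the proposition preceding Remark \ref{rk:dualizeresolutions} is an injective (equivalently free) resolution of $\un^{\ast} \cong \un$. Splicing these two resolutions through the surjection $P_0 \twoheadrightarrow \un$ and the inclusion $\un \hookrightarrow P_0^{\ast}$ produces an acyclic complex $T_\bullet$ of free $A$-modules with
\begin{equation*}
T_s = P_s \ (s\geq 0), \qquad T_s = P_{-s-1}^{\ast} \ (s\leq -1),
\end{equation*}
and one checks in the usual way that $\ext_A^{s,t}(\un,\un) = H^s(\hom_A(T_\bullet, \un))_t$ for every $s \in \Z$.

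The key computation is then to compare $\hom_A(P_s^{\ast},\un)$ with $\hom_A(P_s,\un)$. Using the Frobenius duality $A^{\ast} \cong A[-|A|]$ recorded above the remark, one has $P_s^{\ast} \cong A \otimes V_s^{\ast}[-|A|]$ as free $A$-modules, so a direct calculation of homogeneous $A$-linear maps into $\un$ gives a natural isomorphism of graded $\F$-vector spaces
\begin{equation*}
\hom_A(P_s^{\ast}, \un) \;\cong\; \hom_A(P_s, \un)^{\ast}[|A|].
\end{equation*}
Consequently the negative portion of the cochain complex $\hom_A(T_\bullet, \un)$ is identified, up to a reindexing $s \mapsto -s-1$, with the $\F$-dual of its positive portion shifted by $[|A|]$, and this identification intertwines the two differentials.

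Taking cohomology and using that $\F$-duality is exact and preserves dimensions over $\F_2$ then yields
\begin{equation*}
\ext_A^{-1-s,t}(\un,\un) \;\cong\; \ext_A^{s, |A|-t}(\un,\un),
\end{equation*}
which is the claim after a change of variable. The main bookkeeping difficulty will be tracking the three simultaneous degree conventions (homological index, internal grading, and the $[-|A|]$ Poincar\'e shift) and verifying that the splicing differential does not contribute an off-by-one error at the junction $s=-1$; once the clean identity $\hom_A(P^{\ast},\un) \cong \hom_A(P,\un)^{\ast}[|A|]$ is in hand, the rest is formal.
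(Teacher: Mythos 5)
Your argument is correct and follows exactly the route the paper takes: its proof is the one-line instruction to dualize a projective resolution of $\un$ via Remark \ref{rk:dualizeresolutions} and apply $A^{\ast} \cong A[-|A|]$, which is precisely what you carry out in detail. Your spliced Tate complex and the identification $\hom_A(P_s^{\ast},\un) \cong \hom_A(P_s,\un)^{\ast}[|A|]$ make explicit the degree bookkeeping (including the $-1-s$ shift at the splice) that the paper leaves implicit.
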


\begin{proof}
Choose a projective resolution of $\un$, and dualize it to obtain an injective resolution of $\un$ (use Remark \ref{rk:dualizeresolutions}). Then, the isomorphism $A^{\ast} \cong A[-|A|]$  gives the statement.
\end{proof}

In particular, we get the desired identification of homotopy groups of $hom$-spectra in $\stinf(A)$.

\begin{cor} \label{cor:pdforend}
There is a natural isomorphism
 
\begin{equation*}
\pi_{s,t}(\hom_A(\un,\un)) \cong \ext_A^{s-1,|A|-t}(\un,\un)^{\ast}.
\end{equation*}
\end{cor}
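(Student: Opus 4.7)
The plan is to chain together the two identifications just established in this section. First, Proposition \ref{pro:piiextgroups} identifies the bigraded homotopy groups of the mapping spectrum with stable Ext groups:
\[
\pi_{s,t}(\hom_A(\un,\un)) \cong \ext_A^{-s,t}(\un,\un).
\]
Second, the Poincar\'e duality isomorphism of Proposition \ref{pro:poincare}, applied with $s$ replaced by $-s$, yields
\[
\ext_A^{-s,t}(\un,\un) \cong \ext_A^{s-1,|A|-t}(\un,\un).
\]
Composing these two isomorphisms already gives an undualized version of the formula.

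To produce the dual that appears in the target, I would invoke the finite-dimensionality of $A$: each bigraded summand $\ext_A^{i,j}(\un,\un)$ is a finite-dimensional $\F$-vector space, because a minimal projective resolution of $\un$ is finite-dimensional in each internal degree (since $A$ itself is). Hence each such Ext group is canonically isomorphic to its $\F$-linear dual, and inserting this at the end gives the stated natural isomorphism
\[
\pi_{s,t}(\hom_A(\un,\un)) \cong \ext_A^{s-1,|A|-t}(\un,\un)^{\ast}.
\]
Naturality follows because the two input propositions are themselves natural, and the finite-dimensional dualization is functorial on the relevant subcategory.

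The argument is essentially bookkeeping once the two main inputs are in place, so the main obstacle is notational rather than conceptual: one must check that the desuspension $\Omega$ in $\stinf(A)$ is the categorical desuspension (this is what Corollary \ref{lemma:comparisonofloops} records), so that raising $s$ in $\pi_{s,t}$ genuinely corresponds to lowering the cohomological degree in $\ext$ by one. This is exactly the bookkeeping that makes the shift $-s \mapsto s-1$ produced by Poincar\'e duality fit together consistently with the indexing of the mapping spectrum.
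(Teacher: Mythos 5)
Your proof is correct and is essentially the paper's intended argument: the corollary is stated there without proof precisely because it is the composite of Proposition \ref{pro:piiextgroups} and Proposition \ref{pro:poincare}, with the dual inserted via finite-dimensionality of each bigraded piece. The only quibble is that a finite-dimensional $\F$-vector space is isomorphic to its dual but not \emph{canonically} so (only the double dual is canonical); this imprecision is already present in the paper's own statement and is harmless for the dimension counts the corollary is used for.
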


We conclude this section by various identifications of homotopy groups of spaces ($\pic$, $\End$, $\gm$) which are relevant to our study.

\begin{cor}
There are identifications
 $\pi_i(\gm_{\stinf(A)}(\un)) \eq \ext_A^{i-1,|A|}(\un, \un)^{\ast}$, for $i \geq 1$, and $\pi_0(\gm_{\stinf(A)}(\un) = \{ \mathrm{id}_{\un} \}$.
\end{cor}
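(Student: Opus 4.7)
The plan is to deduce this corollary directly from Corollary \ref{cor:pdforend} and the relationship between $\gm_\C(\un)$ and $\End_\C(\un)$ recorded in Remark \ref{rk:picdeloopingaut}.

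First, I would recall that $\gm_\C(\un) = \iota\End_\C(\un)$ is by definition the subspace of $\End_\C(\un)$ obtained by restricting to those connected components which represent invertible endomorphisms. Passing to such a subspace leaves all higher homotopy groups unchanged and replaces $\pi_0$ by its group of units. Concretely, this gives
\begin{equation*}
\pi_i(\gm_\C(\un)) \cong \pi_i(\End_\C(\un)) \quad\text{for } i \geq 1,
\qquad
\pi_0(\gm_\C(\un)) \cong \pi_0(\End_\C(\un))^\times.
\end{equation*}

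Next, for $i\geq 1$, I would unfold the definition: the spectrum $\End_{\stinf(A)}(\un)$ is the internal endomorphism spectrum $\hom_A(\un,\un)$ in its underlying (i.e.\ internal degree zero) grading, so that $\pi_i(\End_{\stinf(A)}(\un)) = \pi_{i,0}(\hom_A(\un,\un))$. Specializing Corollary \ref{cor:pdforend} to $t=0$ yields
\begin{equation*}
\pi_{i,0}(\hom_A(\un,\un)) \cong \ext_A^{i-1,|A|}(\un,\un)^{\ast},
\end{equation*}
which is exactly the claimed identification.

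For $i=0$, I would observe that $\pi_0(\End_{\stinf(A)}(\un)) = \ext^{0,0}_A(\un,\un) = \mathrm{End}_{\st(A)}(\un)$. Since $A$ is connected over $\F$, any degree zero $A$-linear endomorphism of $\un$ is multiplication by a scalar in $\F$, and no nonzero such map factors through a free $A$-module (a would-be factorization $\un\to A\to\un$ would force the image in $A_0 = \F\cdot 1$ to satisfy $\bar{a}\cdot 1 = 0$ for every $\bar a$ in the augmentation ideal, hence to vanish). Therefore $\pi_0(\End_{\stinf(A)}(\un)) \cong \F$, and $\F^\times = \{1\}$ because $\F = \F_2$, giving $\pi_0(\gm_{\stinf(A)}(\un)) = \{\mathrm{id}_\un\}$. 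There is no real obstacle here, the only delicate point is simply checking that the identity does not become nullhomotopic in the stable category, which is the observation just made.
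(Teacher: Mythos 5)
Your proposal is correct and takes the route the paper intends: the paper states this corollary without a written proof, as the immediate consequence of Corollary \ref{cor:pdforend} specialized to $t=0$ together with the fact that $\gm_{\C}(\un)=\iota\End_{\C}(\un)$ preserves higher homotopy groups and replaces $\pi_0$ by its units. The only cosmetic remark is that for $i=0$ the quickest way to see that $\mathrm{id}_{\un}$ is not stably trivial is that otherwise $\un$ would be a retract of a free module and hence free; your direct computation that every degree-zero $A$-map $\un\to A$ vanishes amounts to the same thing.
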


Using Corollary \ref{cor:pdforend}, we can also identify the higher homotopy groups of $\pic(A)$.

\begin{pro} \label{pro:picpiiviaduality}
\begin{enumerate}
\item $\pi_1(\pic(A)) = \{ \mathrm{id}_{\un} \}$,
\item for $i \geq 2$, $\pi_i(\pic(A)) = \ext^{i-2,|A|}(\un,\un)^{\ast}$.
\end{enumerate}
\end{pro}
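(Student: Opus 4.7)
The proposition will follow essentially formally from Proposition \ref{pro:pipic} combined with the Poincar\'e duality identification in Corollary \ref{cor:pdforend}. I would treat the two parts in turn.

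For part (1), Proposition \ref{pro:pipic}(2) identifies $\pi_1(\pic(A))$ with the group of stable automorphisms of $\un$. To compute this group directly, observe that $\un = \F$ is one-dimensional and that $A$ acts on it through the augmentation, so the only $A$-linear endomorphisms of $\un$ are $0$ and $\mathrm{id}_{\un}$; furthermore $\mathrm{id}_{\un}$ does not factor through a free $A$-module, since $\un$ itself is not free (using that $A$ is a nontrivial connected Hopf algebra). Hence $\pi_1(\pic(A)) = \{\mathrm{id}_{\un}\}$.

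For part (2), assume $i \geq 2$. Proposition \ref{pro:pipic}(3) gives
\begin{equation*}
\pi_i(\pic(A)) \cong \pi_{i-1}(\hom_A(\un, \un)),
\end{equation*}
while Corollary \ref{cor:pdforend}, specialized to $s = i-1$ and internal degree $t = 0$, yields
\begin{equation*}
\pi_{i-1}(\hom_A(\un, \un)) \cong \ext_A^{i-2, |A|}(\un, \un)^{\ast}.
\end{equation*}
Composing the two isomorphisms produces the desired identification.

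Both steps are formal consequences of results already in place, so there is no genuine obstacle. The only point requiring some care is the index bookkeeping in part (2): the ungraded $\pi_{i-1}$ appearing in Proposition \ref{pro:pipic} must be read as the internal-degree-zero slice of the bigraded homotopy $\pi_{s,t}$, which is precisely the slice on which Corollary \ref{cor:pdforend} shifts the homological degree from $s$ to $s-1$ and inserts $|A|$ as the internal degree.
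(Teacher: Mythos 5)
Your proposal is correct and follows the same route the paper intends: the paper states this proposition without a separate proof, as an immediate combination of Proposition \ref{pro:pipic} with Corollary \ref{cor:pdforend} (taking $s=i-1$, $t=0$) and the observation that $\pi_0(\gm_{\stinf(A)}(\un))=\{\mathrm{id}_{\un}\}$. Your explicit justification of part (1) --- that $\Hom_A(\F,\F)=\F$ and the identity cannot factor through a free module since $\un$ is not free over a nontrivial connected $A$ --- simply spells out what the paper leaves implicit, and your index bookkeeping in part (2) matches the paper's.
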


\section{Quillen's F-isomorphism theorem for Hopf algebras}

In this section, we recall and reformulate in our context the classical Quillen's F-isomorphism theorem for detecting nilpotent elements in the cohomology of Hopf algebras. Quillen's original result is proved in \cite{Qui71}, and the corresponding result for cohomology of Hopf algebras, that is $\ext^*_A(\F,\F)$ for a Hopf algebra $A$, can be found in \cite[Theorem 1.3]{Pal97}.

Let $G$ be a finite group, for simplicity. Consider the collection $\E$ of elementary abelian $2$-subgroups of $G$. Quillen's F-isomorphism theorem asserts that the map
\begin{equation*}
H^*(G;\F) \rightarrow \lim_{E \in \E } H^*(E;\F)
\end{equation*}
is an F-isomorphism (finite kernel and each element in the target has some power in the image).

In the theory of Hopf algebras, the appropriate version of elementary abelian $2$-subgroup is less tractable. It is the notion of quasi-elementary Hopf subalgebra.

\begin{de}[\emph{ \cite[Definition 2.1.10]{Pal97}}] \label{de:quasielem}
We say that a connected cocommutative Hopf algebra $B$ is quasi-elementary if no product of the form
$$\prod_{\omega \in S} \omega$$
is zero, for $S$ a finite subset of $\ext^{1,*}_B(\F,\F)$.
\end{de}

Using this notion of quasi-elementary Hopf subalgebra of $A$, the appropriate version of Quillen F-isomorphism theorem holds (see \cite[Theorem 1.2]{Pal97}). The results contained in this section can be seen as the direct consequences of \cite[Theorem 1.2]{Pal97} in representation theory of Hopf algebras.

The hypothesis of Definition \ref{de:quasielem} are certainly satisfied when $B$ is an exterior Hopf algebra, since $\ext^{*,*}_B(\F,\F)$ is polynomial in this case. This motivates the following definition.

\begin{de}
A  Hopf algebra $E$ is called elementary if it is commutative, and $x^2=0$ for all $x$ in the augmentation ideal of $E$.
\end{de}

\begin{lemma}
Elementary  Hopf algebras  are quasi-elementary.
\end{lemma}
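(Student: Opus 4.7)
The plan is to reduce the statement to the observation that a polynomial algebra is an integral domain. The key intermediate claim is that for an elementary Hopf algebra $E$, the bigraded cohomology ring $\ext^{*,*}_E(\F,\F)$ is a polynomial algebra on classes in $\ext^{1,*}_E(\F,\F)$; once this is established, any finite set $S \subset \ext^{1,*}_E(\F,\F)$ of nonzero elements has nonzero product, and the definition of quasi-elementary is verified.

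First, I would identify the Hopf algebra $E$ concretely. Because $E$ is connected, cocommutative, finite dimensional, commutative, and satisfies $x^2=0$ on the augmentation ideal, the Milnor--Moore / Borel structure theorem for connected commutative cocommutative Hopf algebras over the perfect field $\F_2$ applies: $E$ is isomorphic, as a Hopf algebra, to a tensor product of monogenic Hopf algebras of the form $\F_2[x]/(x^{2^n})$ on a primitive generator $x$. The hypothesis that every element of $\bar E$ squares to zero forces each monogenic factor to have $n=1$, so that
\begin{equation*}
E \,\cong\, \bigotimes_{i=1}^{N} \F_2[x_i]/(x_i^2)
\end{equation*}
as Hopf algebras, each $x_i$ primitive of some internal degree $d_i$.

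Next, I would compute $\ext^{*,*}_E(\F,\F)$. For a single primitively generated factor $\F_2[x_i]/(x_i^2)$, a direct minimal free resolution (periodic of period $1$ in homological degree) yields $\ext^{*,*}_{\F_2[x_i]/(x_i^2)}(\F,\F) \cong \F_2[y_i]$, where $y_i$ is the class in bidegree $(1, d_i)$ dual to $x_i$. Applying the K\"unneth formula for $\ext$ over the tensor product of augmented algebras (valid because we are working over a field, so there are no $\mathrm{Tor}$ terms), we obtain
\begin{equation*}
\ext^{*,*}_E(\F,\F) \;\cong\; \F_2[y_1,\ldots,y_N],
\end{equation*}
a polynomial algebra in which the generators $y_i$ span $\ext^{1,*}_E(\F,\F)$.

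Finally, to conclude, I would observe that in the polynomial algebra $\F_2[y_1,\ldots,y_N]$, any finite product of nonzero homogeneous elements is nonzero, since the ring is an integral domain. Therefore no finite product $\prod_{\omega \in S} \omega$ of nonzero elements of $\ext^{1,*}_E(\F,\F)$ can vanish, verifying Definition~\ref{de:quasielem}. The main obstacle is really just the first step: one must ensure that the Borel-type structural decomposition is available as a decomposition of Hopf algebras (not merely of coalgebras or of algebras), so that the K\"unneth identification of the cohomology ring is genuine; once this is in hand, the remaining steps are formal.
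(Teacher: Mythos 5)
Your proof is correct and follows essentially the same route as the paper, which simply observes that $\ext^{*,*}_E(\F,\F)$ is polynomial for an exterior Hopf algebra and leaves the computation to the reader. One small remark: the ``main obstacle'' you flag is not actually one --- Borel's structure theorem gives the decomposition of $E$ as an \emph{algebra}, and that alone suffices for the K\"unneth identification of $\ext_E(\F,\F)$ as a ring, since the coalgebra structure of $E$ plays no role in forming $\ext_E(\F,\F)$ or its Yoneda product.
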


\begin{proof}
Follows from an easy computation of the cohomology of exterior Hopf algebras.
\end{proof}

For many Hopf algebras, these two notions actually coincide, for instance, this is the case for all sub-Hopf-algebras of the modulo $2$ Steenrod algebra, by \cite[Section 2.1.1]{Pal01}.

\begin{de} \label{de:mathcale}
Let $\E$ be the set of maximal quasi-elementary sub-Hopf-algebras of $A$.
\end{de}

We now turn to a central object of our study. By Definition \ref{de:restrictions} and Corollary \ref{cor:assemblyextension}, the forgetful functors $U_E$, for $E \in \E$ assemble into a functor 
\begin{equation*}
U_{\E} : \stinf(A) \rightarrow \stinf(\E),
\end{equation*}
which is isomorphic to
\begin{equation*}
T_{\E} \otimes  : \stinf(A) \rightarrow \mod(T_{\E}),
\end{equation*}
through the equivalence $\stinf(\E) \cong \mod(T_{\E})$. The next result gives one of the essential features of the functor $U_{\E}$ (or equivalently, of the algebra $T_{\E}$).

\begin{pro} \label{pro:detection}
An $A$-module $M$ is free if and only if  $U_{E}(M)$ is $E$-free, for all $E \in \E$. Consequently, the functor $U_{\E}$ is conservative.
Likewise, an $A$-module $M$ is in $Pic(A)$ if and only if, for all $E \in \E$, the stable $E$-module $U_{E}(M)$ is in $Pic(E)$.
\end{pro}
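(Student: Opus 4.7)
The plan is to treat the three assertions in turn. The first, detection of freeness, is a direct citation of Palmieri's F-isomorphism theorem, recorded in the proposition at the start of the introduction (see \cite[Theorem 1.4]{Pal97}). The forward implication is trivial since $A$ is $E$-free for every Hopf subalgebra $E$, and the backward implication is the substantive content of Palmieri's result.

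To deduce conservativity of $U_{\E}$ from this, I would use that a morphism $f : X \to Y$ in $\stinf(A)$ is an equivalence if and only if its cofiber is stably zero, equivalently $A$-free. Since each $U_E$ is exact by Proposition \ref{pro:ubmonoidal}, it preserves cofibers, so $U_{\E}(f)$ is an equivalence if and only if $U_E(\cofiber(f))$ is $E$-free for every $E \in \E$. By the freeness detection statement just recorded, this happens if and only if $\cofiber(f)$ is $A$-free, that is, if and only if $f$ is an equivalence.

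For the Picard statement, the forward direction is formal: each $U_E$ is a symmetric monoidal functor (Proposition \ref{pro:ubmonoidal}), hence sends any $\otimes$-inverse pair in $\stinf(A)$ to one in $\stinf(E)$. For the backward direction, the natural candidate for an inverse of $M$ is its dual $M^{\vee} := \hom_A(M, \un)$, equipped with the evaluation map $ev_M : M^{\vee} \otimes M \to \un$. The key observation is that restriction commutes with internal hom, in the sense that there is a natural equivalence $U_E \hom_A(M, N) \simeq \hom_E(U_E M, U_E N)$, so that $U_E(ev_M)$ is identified with the evaluation map for $U_E M$ in $\stinf(E)$. Since $U_E M$ is assumed invertible, its evaluation is an equivalence; by the conservativity of $U_{\E}$ just established, $ev_M$ is then an equivalence in $\stinf(A)$, which exhibits $M^{\vee}$ as a $\otimes$-inverse of $M$.

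The main technical point is this compatibility of $U_E$ with internal hom. At the level of $A$-modules, both $U_E \hom_A(M, N)$ and $\hom_E(U_E M, U_E N)$ compute $\F$-linear maps $M \to N$ with the conjugation $E$-action restricted from the $A$-action, so the identification is strict at the module level. To promote this to the stable category, one uses that free and injective modules coincide over any Hopf subalgebra (recalled at the start of Section 1), combined with the fact that $A$ is free over $E$: restriction thus preserves both projective and injective resolutions, which makes the derived version of the identification rigorous.
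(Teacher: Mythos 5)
Your proposal is correct and follows essentially the same route as the paper: cite Palmieri's freeness-detection theorem, deduce conservativity of $U_{\E}$ by passing to cofibers, and detect invertibility via the evaluation map $\hom_A(M,\un)\otimes M \to \un$ together with monoidality of $U_E$ and the conservativity just established. The only difference is that you spell out the compatibility of restriction with internal hom, which the paper leaves implicit under the monoidality of $U_E$; this is a reasonable elaboration, not a change of strategy.
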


\begin{proof}
The first assertion is \cite[Theorem 1.3]{Pal97}. Now, for $B$ a Hopf algebra, a map $f : M \rightarrow N$ between $B$-modules is a stable isomorphism if and only if its cofiber is free.
Thus a morphism $f$ in $\st(A)$ is a stable isomorphism if and only if, for every $E \in \E$, $U_{E}f$ is a stable isomorphism of $E$-modules. \\

We still have to show the last property. Observe that $M$ is invertible with respect to the tensor product if and only if the map $$M \otimes \hom(M,\un) \rightarrow \un$$
is a stable equivalence. The fact that each $U_{E}$ is monoidal (see Remark \ref{pro:ubmonoidal}), and that stable equivalences are detected on $\E$ gives the result.
\end{proof}

\part{Descent, Picard groups, and the lifting problem} \label{part:descent}

\section{A cosimplicial model for $\stinf(A)$}

In this section, we show how the stable category $\stinf(A)$ can be recovered from the categories $\stinf(E)$, where $E$ runs through the collection of maximal elementary Hopf subalgebras of $A$.

For simplicity, we denote simply by $T$ the algebra object $T_{\E}$, where $\E$ is the set of all maximal elementary Hopf subalgebras of $A$ (as in Definition \ref{de:mathcale}). The main result of this section is Theorem \ref{thm:totalization} which is a homotopical descent result for the algebra object $T$.

Following the analogy between representations of a group $G$ and the study of representations of a connective Hopf algebra $A$, one can think of this result as an analogue of the descent up-to-nilpotence set-up in \cite{MNN15}. Indeed, each of the algebras $T_E$, for $E\in \E$, is the correct analogue of a $G$-orbit in our setting.

\begin{thm} \label{thm:totalization}
The functor $T \otimes : \stinf(A) \rightarrow \mod_{\stinf(A)}(T)$
extends to an equivalence of stable monoidal $\infty$-categories
\begin{equation}
 \stinf(A) \cong Tot\left(  \xymatrix{\mod_{\stinf(A)}(T)
  \ar@<-.5ex>[r] \ar@<.5ex>[r] & \mod_{\stinf(A)}(T\otimes T) \ar@<-.7ex>[r] \ar@<-.0ex>[r] \ar@<.7ex>[r]
& \hdots}\right),
\end{equation}
where $T$ is the algebra $ \Pi_{E \in \E} T_E.$
When $A$ admits only one elementary sub-Hopf-algebra $E$, this reduces to
\begin{equation*} 
 \stinf(A) \cong Tot\left(  \xymatrix{\stinf(E) \ar@<-.5ex>[r] \ar@<.5ex>[r] & \mod_{\stinf(E)}(T_E) \ar@<-.7ex>[r] \ar@<-.0ex>[r] \ar@<.7ex>[r]  & \mod_{\stinf(E)}(T_E^{\otimes 2}) \hdots}\right).
\end{equation*}
\end{thm}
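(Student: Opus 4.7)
The plan is to identify the displayed cosimplicial diagram as the (\emph{cobar}) descent diagram for the commutative algebra $T$ in $\stinf(A)$, and to invoke the $\infty$-categorical descent theorem for stable symmetric monoidal $\infty$-categories developed in \cite{MNN15}.

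First I would record that $T$ is a commutative algebra object in $\stinf(A)$ (by Definition \ref{de:defT} and Corollary \ref{cor:assemblyextension}), together with the symmetric monoidal adjunction
$$ T \otimes (-) : \stinf(A) \rightleftarrows \mod_{\stinf(A)}(T) : \mathrm{forget}. $$
The cobar construction of this adjunction is precisely the cosimplicial stable symmetric monoidal $\infty$-category displayed in the statement, and it comes equipped with a canonical comparison map from $\stinf(A)$ into its totalization.

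The main step is to show that this comparison is an equivalence, which amounts to verifying that $T$ is \emph{descendable} in $\stinf(A)$ in the sense of \cite{MNN15}. I would use the standard criterion: conservativity of $T \otimes -$, dualizability of $T$, and compact generation of $\stinf(A)$. Compact generation is recorded in Proposition \ref{pro:hpsstable} (the category is generated by the compact objects $\un[t]$). Dualizability of $T = \prod_{E \in \E} T_E$ follows from the finite dimensionality over $\F$ of each $T_E = (A \sur E)^{\ast}$. Conservativity of $T \otimes -$ is exactly the detection statement of Proposition \ref{pro:detection}: the functor $T \otimes -$ corresponds, under the equivalence of Corollary \ref{cor:assemblyextension}, to the conservative restriction $U_{\E}$. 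The resulting descent equivalence is automatically symmetric monoidal because each functor assembling it is symmetric monoidal.

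The second part of the statement, concerning a Hopf algebra $A$ with a unique quasi-elementary sub-Hopf-algebra $E$, follows formally by applying Proposition \ref{pro:extofscalarshopf} at each cosimplicial level: the $0$-th level $\mod_{\stinf(A)}(T_E)$ becomes $\stinf(E)$, and the $n$-th level $\mod_{\stinf(A)}(T_E^{\otimes n+1})$ identifies with $\mod_{\stinf(E)}(T_E^{\otimes n})$ by reinterpreting the remaining $n$ tensor factors of $T_E$ inside the symmetric monoidal $\infty$-category $\stinf(E) \cong \mod_{\stinf(A)}(T_E)$.

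The main obstacle I anticipate is the precise verification of descendability of $T$. In the MNN framework, this typically reduces to a nilpotence estimate: one must exhibit an integer $N$ such that the $N$-fold tensor power of the fiber of $\un \to T$ is nullhomotopic in the thick $\otimes$-ideal generated by $T$. In our algebraic setting this is accessible through Quillen's F-isomorphism theorem (cited at the start of the introduction and reformulated in Proposition \ref{pro:detection}), but the passage from the conservativity/detection statement to an explicit nilpotence bound requires careful bookkeeping with the cohomological dimension of $A$.
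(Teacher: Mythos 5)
Your overall strategy coincides with the paper's: both arguments run the descent machinery of \cite{MNN15} for the commutative algebra $T$, feeding it conservativity of $T\otimes(-)$ (Lemma \ref{lemma:tconservative}, which is Proposition \ref{pro:detection} repackaged through Corollary \ref{cor:assemblyextension}) and strong dualizability of $T$ (finite dimensionality over $\F$), and both obtain the second displayed equivalence by identifying the cosimplicial levels via Proposition \ref{pro:extofscalarshopf}. The one place where your write-up stops short is precisely the step you flag at the end: you frame the key hypothesis as \emph{descendability} of $T$, which would indeed require exhibiting an integer $N$ for which the $N$-fold tensor power of $\fiber(\un\to T)$ maps trivially to $\un$, and you leave that verification open.

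The paper's proof circumvents this issue rather than resolving it. It invokes \cite[Theorem 2.30]{MNN15}, which under only conservativity and dualizability identifies the totalization of the cobar construction with the full subcategory of $T$-\emph{complete} objects of $\stinf(A)$, and then separately checks that every object of $\stinf(A)$ is $T$-complete by the general argument of \cite[Example 2.17]{MNN15} --- again using only that $T$ is finite-dimensional and that $T\otimes(-)$ is conservative. No explicit nilpotence degree, and in particular no bookkeeping with the cohomological dimension of $A$, enters the argument. Your route could also be completed, since Palmieri's theorem is a genuine nilpotence theorem and not merely a detection-of-freeness statement, so $T$ is in fact descendable; but the completeness formulation is the shorter path and is the one the paper takes. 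If you rewrite your middle step to assert $T$-completeness of every object (citing the finiteness-plus-conservativity criterion) instead of descendability, your proof closes with exactly the inputs you have already assembled.
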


In other words, the stable category of $A$-modules can be reconstructed from the stable category of $E$-modules, for each $E \in \E$. Essentially, this is due to the fact that the functor $U_{\E}$, or equivalently $T \otimes : \stinf(A) \rightarrow \stinf(A)$, does not loose too much information. This property, which is also the crucial point of the proof of Theorem \ref{thm:totalization}, is expressed in the following Lemma.

\begin{lemma} \label{lemma:tconservative}
The functor 
$$T \otimes(-) : \stinf(A) \rightarrow \stinf(A)$$
is conservative.
\end{lemma}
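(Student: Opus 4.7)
The plan is to reduce the statement to Proposition \ref{pro:detection}, which is the Palmieri detection theorem rephrased in our language. The functor $T \otimes (-) : \stinf(A) \to \stinf(A)$ factors through the category of $T$-modules, and under the equivalence of Corollary \ref{cor:assemblyextension},
\begin{equation*}
\mod_{\stinf(A)}(T) \simeq \stinf(\E) = \coprod_{E \in \E} \stinf(E),
\end{equation*}
the extension of scalars $T \otimes (-)$ corresponds to the global restriction functor $U_{\E} = \coprod_{E \in \E} U_E$. Since the forgetful functor $\mod_{\stinf(A)}(T) \to \stinf(A)$ is conservative (it detects whether an underlying object is zero), conservativity of $T \otimes (-)$ is equivalent to conservativity of $U_{\E}$.

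Next I would observe that because $\stinf(A)$ is stable, a morphism $f$ is an equivalence iff its cofiber is the zero object, and similarly for each $\stinf(E)$. So conservativity of $U_{\E}$ amounts to the statement that an $A$-module $M$ is contractible (i.e.\ free) iff each $U_E(M)$ is contractible (i.e.\ free) for every $E \in \E$. This is exactly the content of the first assertion of Proposition \ref{pro:detection}, which in turn rests on Palmieri's F-isomorphism/detection theorem for quasi-elementary sub-Hopf algebras (\cite[Theorem 1.3]{Pal97}).

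Putting this together, the proof is essentially a one-line assembly: apply Corollary \ref{cor:assemblyextension} to identify $T \otimes (-)$ with $U_{\E}$, then invoke Proposition \ref{pro:detection}. No genuine obstacle arises here; the detection of freeness by quasi-elementary restrictions has already been imported as the key technical input, and the role of this lemma is merely to repackage that statement in the form needed for the descent argument of Theorem \ref{thm:totalization}, where conservativity of $T \otimes (-)$ will be combined with the Barr--Beck--Lurie theorem.
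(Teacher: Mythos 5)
Your proposal is correct and follows essentially the same route as the paper: identify $T \otimes (-)$ with $U_{\E}$ via Corollary \ref{cor:assemblyextension} and then invoke the conservativity of $U_{\E}$ from Proposition \ref{pro:detection}. Your additional remark that the forgetful functor $\mod_{\stinf(A)}(T) \to \stinf(A)$ is conservative makes the passage from the module-category statement to the endofunctor statement slightly more explicit than the paper's "in particular," but the argument is the same.
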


\begin{proof}
By Proposition \ref{pro:detection}, the functor $U_{\E}$ is a conservative functor. But by Corollary \ref{cor:assemblyextension}, the functor 
$$T \otimes(-) : \stinf(A) \rightarrow \mod_{\stinf(A)}(T)$$
is the composition of $U_{\E}$ and an equivalence of categories. Thus, 
it is conservative. In particular, the functor 
$$T \otimes(-) : \stinf(A) \rightarrow \stinf(A)$$
is conservative as well.
\end{proof}

\begin{proof}[Proof of Theorem \ref{thm:totalization}]
We show that the algebra object $T$ satisfies the hypothesis of Theorem 2.30 \cite{MNN15}, which identifies the category of $T$-complete objects with the desired totalization. 

Tensoring with the algebra $T$ is conservative by Lemma \ref{lemma:tconservative}. Moreover, $T$ is strongly dualizable, since it is a finite dimensional $A$-module.

Thus, \cite[Theorem 2.30]{MNN15} provides an equivalence of stable monoidal $\infty$-categories \begin{equation} \label{eq:crudetotalization}
 \stinf(A) \cong Tot\left(  \xymatrix{\mod_{\stinf(A)}(T)
  \ar@<-.5ex>[r] \ar@<.5ex>[r] & \mod_{\stinf(A)}(T\otimes T) \ar@<-.7ex>[r] \ar@<-.0ex>[r] \ar@<.7ex>[r]
& \hdots}\right).
\end{equation}
Now, by the identification given in Proposition \ref{pro:extofscalarshopf}, the layers $\mod_{\stinf(A)}(T^{\otimes n})$ of the cosimplicial category given in equation \eqref{eq:crudetotalization} are equivalent to $\mod_{\stinf(\E)}(T^{\otimes (n-1)})$.

To conclude the proof, we have to show that every object of $\stinf(A)$ is $T$-complete. Now, the general argument given in Example 2.17 \cite{MNN15} applies, since $T$ is finite-dimensional and tensoring with $T$ is a conservative functor by Lemma \ref{lemma:tconservative}.
\end{proof}

\begin{rk}
Theorem \ref{thm:totalization} is the key point of our analysis. It is tempting to change the definition of $\E$ to ease the computation. The analogous Theorem holds, replacing the set $\E$ by any set $\B$ consisting in Hopf subalgebras of $A$, satisfying the following property. For any maximal elementary Hopf subalgebra $E$ of $A$, there is a Hopf subalgebra $B \in \B$ such that $E \subset B$.

The reader must be warned that this does not seem to ease the computations in concrete examples.
\end{rk}

\section{The stable Picard group via descent} \label{sec:picarddescent}

In this section, we use Theorem \ref{thm:totalization} to address the following question.

\begin{prob}[Determination of the Picard group] \label{prob:pic}
What is the group of stably invertible $A$-modules with respect to the tensor product?
\end{prob}

As stated, it is not clear how the technology developed in the previous sections will help us in this study. In order to apply our results, we first need to consider an object whose behaviour with respect to homotopical constructions is nicer than the Picard group. Recall from Definition \ref{de:picardspace} that there is a connective spectrum $\pic(A)$, associated to the Hopf algebra $A$, which is a spectral enhancement of the Picard group in the sense of Proposition \ref{pro:picpiiviaduality}.

The reason why this object behaves better with respect to homotopical constructions than the mere Picard group is expressed in Proposition \ref{pro:piclimits}. In particular, we obtain

\begin{thm} \label{thm:picardss}
There is a weak equivalence of spaces
$$ \pic(A) \cong Tot\left(  \xymatrix{\pic(Mod_{\stinf(A)}(T))
  \ar@<-.5ex>[r] \ar@<.5ex>[r] & \pic(Mod_{\stinf(A)}(T\otimes T)) \ar@<-.7ex>[r] \ar@<-.0ex>[r] \ar@<.7ex>[r]
& \hdots}\right) .$$
\end{thm}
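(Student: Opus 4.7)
The plan is a direct application of two earlier results. Theorem~\ref{thm:totalization} identifies $\stinf(A)$ with the totalization $Tot\bigl(\mod_{\stinf(A)}(T^{\otimes \bullet+1})\bigr)$ in the $\infty$-category $\infty\dashmod\cat^{\otimes}$ of stable symmetric monoidal $\infty$-categories. A totalization is by definition the limit over $\Delta$ of the underlying cosimplicial object, so this presents $\stinf(A)$ as a limit in $\infty\dashmod\cat^{\otimes}$.

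Next, I would apply the functor $\pic$ to the equivalence of Theorem~\ref{thm:totalization}. By Proposition~\ref{pro:piclimits}, $\pic$ preserves arbitrary limits, and in particular totalizations of cosimplicial diagrams. Combined with the identification $\pic(A) = \pic(\stinf(A))$ of Definition~\ref{de:picardspace}, this yields
\begin{equation*}
\pic(A) \simeq \pic\Bigl(Tot\,\mod_{\stinf(A)}(T^{\otimes \bullet+1})\Bigr) \simeq Tot\Bigl(\pic(\mod_{\stinf(A)}(T^{\otimes \bullet+1}))\Bigr),
\end{equation*}
which is exactly the claimed weak equivalence.

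I do not expect a substantive obstacle: all the heavy lifting has already been done in Theorem~\ref{thm:totalization} (the descent result for the algebra $T$) and in Proposition~\ref{pro:piclimits} (the limit-preservation property of $\pic$, borrowed from \cite{MS14}). The only point warranting a brief sanity check is verifying that applying $\pic$ termwise to the cosimplicial diagram of $\infty$-categories really produces the cosimplicial diagram of Picard spaces whose totalization appears on the right-hand side; this is automatic from the functoriality of $\pic$ with respect to the coface and codegeneracy maps coming from the bar construction on the commutative algebra $T$.
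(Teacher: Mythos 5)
Your proposal is correct and follows exactly the paper's own argument: apply $\pic$ to the equivalence of Theorem~\ref{thm:totalization} and use the limit-preservation property of Proposition~\ref{pro:piclimits} to commute $\pic$ past the totalization. The extra remarks about functoriality along the coface and codegeneracy maps are a reasonable elaboration of what the paper leaves implicit.
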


\begin{proof}
The first assertion is Theorem \ref{thm:totalization}, and the behaviour of $\pic$ with respect to limits given in Proposition \ref{pro:piclimits}. 
\end{proof}

\begin{cor} \label{cor:sspicard}
There is a spectral sequence 
$$E_1^{n,s} = N^n\pi_s(\pic(Mod_{\stinf(A)}(T^{\otimes {\bullet+1}}))) \Rightarrow \pi_{s-n}(\pic(A))$$
computing the homotopy groups of the Picard spectrum, where $N^n$ is the Moore normalization functor.

Its $r$th differential has bidegree $(r,r-1)$, and this spectral sequence converges for $s-n \geq 0$.
\end{cor}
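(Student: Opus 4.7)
The plan is to produce this spectral sequence as the standard Bousfield--Kan spectral sequence attached to the cosimplicial spectrum identified in Theorem \ref{thm:picardss}. Setting $X^n := \pic(\mod_{\stinf(A)}(T^{\otimes n+1}))$, Theorem \ref{thm:picardss} gives an equivalence $\pic(A) \simeq \mathrm{Tot}(X^\bullet)$, so the corollary amounts to extracting the homotopy-group spectral sequence of this totalization.

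First I would record that each $X^n$ is a connective spectrum by Definition \ref{de:picardspace}, so $X^\bullet$ is a cosimplicial object in connective spectra. Then I would invoke the standard Bousfield--Kan spectral sequence for the Tot tower of a cosimplicial pointed space (or connective spectrum): its $E_1$ page is the Moore normalization $N^n$ of the cosimplicial abelian group $[n] \mapsto \pi_s(X^n)$, its $r$th differential has bidegree $(r,r-1)$, and it converges towards $\pi_{s-n}$ of the totalization. Unwinding the definition of $X^n$ reproduces exactly the $E_1$ page displayed in the statement, with abutment $\pi_{s-n}(\pic(A))$.

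The one genuinely delicate point, and what I would expect to be the main obstacle, is convergence. Strong convergence in the fringe range $s - n \geq 0$ is the usual phenomenon for Bousfield--Kan spectral sequences associated to cosimplicial connective spectra: because each $X^n$ is connective, in any fixed nonnegative total degree only finitely many layers of the Tot tower $\{\mathrm{Tot}_m(X^\bullet)\}_m$ contribute nontrivially, so both $\lim$ and $\lim^1$ obstructions vanish there. Once this is invoked cleanly, for instance via the framework of \cite{MNN15} or \cite{MS14} applied to Picard spectra, the corollary follows formally from Theorem \ref{thm:picardss}; no further ingredients specific to Hopf algebras are needed beyond what has already been set up.
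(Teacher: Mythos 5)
Your proposal is correct and follows essentially the same route as the paper: both simply take the Bousfield--Kan spectral sequence of the cosimplicial object underlying the totalization in Theorem \ref{thm:picardss}, using the canonical base point (the unit, i.e.\ the connectivity/pointedness of each layer $\pic(\mod_{\stinf(A)}(T^{\otimes n+1}))$) to ensure the spectral sequence can be run. The paper's own proof is in fact terser than yours --- it does not elaborate on convergence beyond asserting the standard fringe range $s-n\geq 0$ --- so your additional remarks on connectivity are a harmless elaboration rather than a divergence in method.
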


\begin{proof}
The spectral sequence is the Bousfield-Kan spectral sequence associated to a cosimplicial space. Note that, because of Theorem \ref{thm:picardss}, we already know that the totalization of the cosimplicial space is non empty. Indeed, the unit $\un \in \stinf(A)$ is a canonical choice of base point in each layer, and it is also an element of the Picard group of $\stinf(A)$. Thus, we can run the spectral sequence.
\end{proof}

The last two results gives a general machinery which does the computation of $\pi_*(\pic(A))$. At this point, the computations are essentially inaccessible from such a generic description of the $E_1$-term of the spectral sequence for the Picard space. The rest of this section is devoted to the study of this spectral sequence. First, the Picard space (see Definition \ref{de:picardspace}) of a symmetric monoidal stable category $\C$ is a delooping of the space of automorphisms of the unit $\gm_{\C}(\un)$ (see Remark \ref{rk:picdeloopingaut}). But the latter is a subfunctor of $\End_{\C}(\un)$. We can thus compare the $E_1$-pages of the Bousfield-Kan spectral sequences derived from the cosimplicial spaces obtained by applying $\pic$, $\gm$ and $\End$ to the cosimplicial space appearing in Theorem \ref{thm:totalization}.

Furthermore, using the adjunction 
\begin{equation*}
T^{\otimes n} \otimes : \stinf(A) \rightleftarrows  \mod_{\stinf(A)}(T^{\otimes n}) : U,
\end{equation*} 
where $U$ denotes the forgetful functor, we obtain an explicit description of the latter spectral sequence.

We start by analysing the Bousfield-Kan spectral sequence for endomorphism spaces.

\begin{pro} \label{pro:bkssendaut}
There are two natural Bousfield-Kan spectral sequences:
\begin{equation}
E_1^{s,t,n}(\End) \cong N^n\ext_A^{s-1,|A|-t}(\un,T^{\otimes {\bullet+1}}) \Rightarrow \ext_A^{s+n-1,|A|-t}(\un,\un),
\end{equation}

and 
$$E_1^{s,n}(\gm) =  N^n\pi_s(\gm_{Mod_{\stinf(A)}(T^{\otimes \bullet+1})}(\un)) \Rightarrow \pi_{s-n}(\gm_{\stinf(A)}(\un)).$$
Moreover, the natural transformation $\gm_{-}(\un) \rightarrow \End_{(-)}(\un)$ induces a natural morphism of spectral sequences 
$$ E_1^{s,n}(\gm) \rightarrow E_1^{s,0,n}(\End)$$
which is an isomorphism in degrees $(s,n)$ for  $s \geq 1$ at the $E_1$-page.
\end{pro}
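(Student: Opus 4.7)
The plan is to apply the limit-preserving functors $\End_{(-)}(\un)$ and $\gm$ to the cosimplicial model of Theorem \ref{thm:totalization} and invoke the standard Bousfield-Kan machinery. First, by Lemma \ref{lemma:holimend} both functors preserve totalizations, so applying them to the cosimplicial equivalence of Theorem \ref{thm:totalization} produces equivalences
$$ \End_{\stinf(A)}(\un) \simeq \mathrm{Tot}\bigl( \End_{\mod_{\stinf(A)}(T^{\otimes \bullet + 1})}(\un) \bigr)$$
and analogously for $\gm$. The two spectral sequences in the statement are then the Bousfield-Kan spectral sequences attached to these cosimplicial spectra and spaces, with the $E_1$-page given by the Moore normalization $N^{\bullet}$ of the levelwise homotopy groups and abutting to $\pi_{s-n}$ of the totalizations. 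Convergence in the stated range follows from standard connectivity estimates for a Bousfield-Kan spectral sequence.

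Next, I identify the $E_1$-page for $\End$ via the free-forgetful adjunction of Proposition \ref{pro:extofscalarshopf}. The unit of $\mod_{\stinf(A)}(T^{\otimes n+1})$ is $T^{\otimes n+1}$ equipped with its tautological module structure, and the adjunction identifies
$$ \End_{\mod_{\stinf(A)}(T^{\otimes n+1})}(\un) \simeq \hom_{\stinf(A)}\bigl(\un, T^{\otimes n+1}\bigr). $$
Computing $\pi_{s,t}$ of the right-hand side through Proposition \ref{pro:piiextgroups} and applying the regrading of Proposition \ref{pro:poincare} (whose proof, resting on the dualization of resolutions explained in Remark \ref{rk:dualizeresolutions} and the identification $A^{\ast} \simeq A[-|A|]$, extends verbatim when the second variable is replaced by an arbitrary $A$-module such as $T^{\otimes n+1}$) converts this homotopy group into $\ext_A^{s-1,|A|-t}(\un, T^{\otimes n+1})$, yielding the claimed $E_1$-term after normalization.

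Finally, for the comparison map, recall from Remark \ref{rk:picdeloopingaut} that $\gm_{\C}(\un) \hookrightarrow \End_{\C}(\un)$ is the inclusion of those path components of $\pi_0$ which are invertible for the composition. Such an inclusion is an isomorphism on $\pi_s$ for $s \geq 1$. Applying this levelwise in the cosimplicial direction gives a map of cosimplicial spaces, hence an induced morphism of Bousfield-Kan spectral sequences which is an isomorphism on $E_1$ for all $s \geq 1$, as asserted.

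The main obstacle is the middle step: one must carefully check that the Poincaré-duality identification of Proposition \ref{pro:poincare} and Corollary \ref{cor:pdforend} survives the replacement of the second argument $\un$ by the sequence of coefficient modules $T^{\otimes n+1}$, and that the cosimplicial structure maps are compatible with this regrading. Everything else is formal once the $\infty$-categorical machinery of totalizations and the free-forgetful adjunction is in place.
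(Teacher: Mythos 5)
Your proposal follows essentially the same route as the paper: apply the limit-preserving functors $\End_{(-)}(\un)$ and $\gm$ (Lemma \ref{lemma:holimend}) to the totalization of Theorem \ref{thm:totalization}, take the associated Bousfield-Kan spectral sequences, identify the layers via the adjunction $\End_{\mod_{\stinf(A)}(T^{\otimes n+1})}(\un) \simeq \hom_{\stinf(A)}(\un, T^{\otimes n+1})$, and obtain the comparison map from the inclusion $\gm \rightarrow \End$, which is a $\pi_s$-isomorphism for $s \geq 1$. Your extra care about extending the Poincar\'e-duality regrading to the coefficients $T^{\otimes n+1}$ is a point the paper leaves implicit, but it does not change the argument.
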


\begin{proof}
We start by analysing the former spectral sequence.
Apply the functor $\End_{(-)}(\un)$, which associates to an $\infty$-category its space of endomorphisms of the unit, to the equivalence of stable symmetric monoidal $\infty$-categories provided by Theorem \ref{thm:totalization}. This functor commutes with limits by Lemma \ref{lemma:holimend}, so we get a weak equivalence

$$ \End_{A}(\un) \cong Tot\left(  \xymatrix{\End_{Mod_{\stinf(A)}(T)}(\un)
  \ar@<-.5ex>[r] \ar@<.5ex>[r] & \End_{Mod_{\stinf(A)}(T\otimes T)}(\un) \ar@<-.7ex>[r] \ar@<-.0ex>[r] \ar@<.7ex>[r]
& \hdots}\right) .$$

There is an associated Bousfield-Kan spectral sequence, whose $E_1$-page is 
\begin{equation*}
E_1^{s,n}(\End) = \pi_s(\End_{(Mod_{\stinf(A)}(T^{\otimes {n+1}})}(\un)) \Rightarrow \pi_{s-n}(\End_{A}(\un) ).
\end{equation*}

Now, the layers of the cosimplicial space are identified with a more computable object, using the following chain of isomorphisms
\begin{eqnarray*}
\End_{(Mod_{\stinf(A)}(T^{\otimes {n+1}})}(\un) & := & \hom_{Mod_{\stinf(A)}(T^{\otimes {n+1}})}(T^{\otimes {n+1}},T^{\otimes n+1}) \\
& \simeq & \hom_{\stinf(A)}(\un, T^{\otimes {n+1}}).
\end{eqnarray*}

A similar argument is valid for the automorphism space functor. Apply the functor $\gm_{(-)}(-)$, which commutes with limits by Lemma \ref{lemma:holimend}, to the equivalence provided by Theorem \ref{thm:totalization}. The associated Bousfield-Kan spectral sequence is precisely the asserted one.

The identification of the map of spectral sequence given in the last assertion is induced by the natural transformation $\End_{(-)}(\un) \rightarrow \gm_{(-)}(-)$.
\end{proof}

Note that the unusual indexing of the stable extension groups is harmless: one could consider the spectral sequence 
\begin{equation*}
E'(\End)_1^{s,t,n} = N^n\ext_A^{s,t}(\un,T^{\otimes \bullet+1}) \Rightarrow \ext_A^{s-n,t}(\un,\un),
\end{equation*}
obtained by reindexing the one given in Proposition \ref{pro:bkssendaut}. The indexing is chosen here to have the comparison map 
\begin{equation*}
E_1^{s,n}(\gm) \rightarrow E_1^{s,0,n}(\End).
\end{equation*}

\begin{rk} \label{rk:simplicifationoneelementary}
In the case when there is only one maximal elementary sub-Hopf-algebra $E$ of $A$, the spectral sequence computing $\ext^{*,*}_A(\un,\un)$ given in Proposition \ref{pro:bkssendaut} coincides with a version the Cartan-Eilenberg spectral sequence from the $E_2$-page onwards. In this sense, the spectral sequence we obtain is a generalization of the latter.
\end{rk}

\begin{pro} \label{pro:comparison}
The canonical identification $\pi_{i-1}(\End_{(-)}(\un)) \eq \pi_i(\pic(-))$, for $i \geq 2$, induces an isomorphism  of abelian groups
\begin{equation}
E_1^{s-1,0,n}(\End) \cong E_1^{s,n}(\pic)
\end{equation}
for all $n \geq 0$ and $s \geq 2$.

Likewise, the canonical identification $\pi_{i-1}(\gm_{(-)}(\un)) \eq \pi_i(\pic(-))$, for $i \geq 1$, induces an isomorphism  of abelian groups
\begin{equation}
E_1^{s-1,n}(\gm) \cong E_1^{s,n}(\pic)
\end{equation}
whenever $n \geq 0$ and $s\geq 1$.

Moreover, for $i \geq 1$, the differentials originating at $E_r^{s,n}$ for $s-n \geq 1$ or $r \leq s-1$ in the descent spectral sequence converging to $\pi_{s-n}(\pic(A))$ are identified with the ones of $E^{s-1,0,n}_r(\End)$ (respectively $E^{s-1,n}_r(\gm)$).
\end{pro}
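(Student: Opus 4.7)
The plan is to realize the three comparisons at the level of cosimplicial diagrams arising from Theorem \ref{thm:totalization}, and then to appeal to functoriality of the Bousfield-Kan spectral sequence. First, applying the three functors $\pic(-)$, $\gm_{(-)}(\un)$, and $\End_{(-)}(\un)$ to the cosimplicial $\infty$-category of Theorem \ref{thm:totalization} produces three cosimplicial spaces whose totalizations compute $\pic(A)$, $\gm_{\stinf(A)}(\un)$, and $\End_{\stinf(A)}(\un)$ respectively, by Lemma \ref{lemma:holimend} and Proposition \ref{pro:piclimits}. The natural inclusion $\gm_{\C}(\un) \hookrightarrow \End_{\C}(\un)$ and the delooping equivalence $\tau_{\geq 1}\pic(\C) \simeq \Omega\gm_{\C}(\un)$ of Remark \ref{rk:picdeloopingaut} are natural in $\C$, so they assemble into maps of the corresponding cosimplicial diagrams.

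Next, since the Bousfield-Kan spectral sequence is functorial in cosimplicial spaces, these maps induce morphisms between the three spectral sequences $E(\pic)$, $E(\gm)$ and $E^{\bullet,0,\bullet}(\End)$. At the $E_1$-page they are computed by applying $\pi_s$ pointwise in each cosimplicial degree and then the (exact) Moore normalization $N^n$. Proposition \ref{pro:pipic} supplies the pointwise identifications
\begin{equation*}
\pi_s(\pic(\C)) \xleftarrow{\;\cong\;} \pi_{s-1}(\gm_{\C}(\un)) \hookrightarrow \pi_{s-1}(\End_{\C}(\un)),
\end{equation*}
where the first arrow is an isomorphism for $s \geq 1$ and the second is an isomorphism for $s \geq 2$, since $\gm_{\C}(\un)$ and $\End_{\C}(\un)$ differ only in $\pi_0$. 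Specialising $\C = \mod_{\stinf(A)}(T^{\otimes n+1})$ and applying $N^n$ yields the asserted $E_1$-isomorphisms in the announced ranges.

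For the identification of differentials, I would invoke the standard principle that a morphism of spectral sequences which is an isomorphism on an $E_1$-region closed under all differentials $d_1, \ldots, d_{r-1}$ remains an isomorphism on $E_r$ in that region. With differentials of bidegree $(r,r-1)$, a direct inspection shows that every $d_\rho$ with $\rho \leq r$ emanating from or arriving at position $(s,n)$ has both endpoints inside the isomorphism region $s \geq 2$ (respectively $s \geq 1$) precisely when $s-n \geq 1$ or $r \leq s-1$---which is exactly the hypothesis of the final assertion. The identification of differentials then follows by a routine induction on $r$.

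The main obstacle is the index bookkeeping of the last step: one must verify carefully that the stated range $s-n \geq 1$ or $r \leq s-1$ is precisely the locus where no differential of length $\leq r$ escapes the $E_1$-isomorphism region, so that the iso genuinely persists to $E_r$. Everything else is formal: the $E_1$-comparison follows by a pointwise application of Proposition \ref{pro:pipic}, and the existence of a map of spectral sequences is automatic from the functoriality of the Bousfield-Kan construction.
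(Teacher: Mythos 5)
Your proposal is sound in outline, but it is worth knowing that the paper does not actually prove this proposition: its entire proof is the citation ``This is given in \cite[Section 5]{MS14}.'' What you have written is essentially a reconstruction of the argument from that reference --- levelwise natural transformations assembling into maps of cosimplicial objects, functoriality of the Bousfield--Kan spectral sequence, the pointwise identifications of Proposition \ref{pro:pipic} giving the $E_1$-isomorphisms, and a range argument for the persistence of the identification of differentials. So you are doing something the paper declines to do, and the overall strategy is the correct one.

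Two points deserve more care than you give them. First, the comparison between $\pic$ and $\gm$ is not induced by a natural transformation between the functors appearing in the two cosimplicial diagrams in the naive sense: the relevant map is $\tau_{\geq 1}\pic(\C) \rightarrow \pic(\C)$ together with the identification of $\tau_{\geq 1}\pic(\C)$ with a delooping of $\gm_{\C}(\un)$, and the functor $\tau_{\geq 1}\pic$ does \emph{not} commute with totalization --- this is precisely why the two spectral sequences have different abutments and why the comparison of differentials only holds in a range. You gesture at Remark \ref{rk:picdeloopingaut} but should make explicit that the comparison is a zigzag through the truncated cosimplicial object. Second, your claim that the range ``$s-n \geq 1$ or $r \leq s-1$'' is \emph{precisely} the locus where every $d_\rho$ with $\rho \leq r$ has both endpoints in the isomorphism region is asserted rather than checked, and as stated it is not quite right: outgoing differentials always increase the homotopy degree $s$ and so never leave the isomorphism region, while an incoming $d_\rho$ into position $(s,n)$ has source in homotopy degree $s-\rho+1$ and cosimplicial degree $n-\rho$, so it exists only for $\rho \leq n$ and lands in the isomorphism region iff $\rho \leq s$ (resp.\ $\rho \leq s-1$ for the $\End$ comparison, where the isomorphism starts at $s \geq 2$). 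The alternative ``$s - n \geq 1$'' works because then every incoming differential that exists at all ($\rho \leq n$) automatically has $\rho \leq s$; it is not that no differential ``escapes'' but that none enters from outside. The stated conditions are sufficient, not sharp, and the induction on $r$ should be phrased accordingly. With those two repairs your argument goes through and is a legitimate substitute for the citation.
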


\begin{proof}
This is given in \cite[Section 5]{MS14}.
\end{proof}

Now, when given a Hopf algebra $A$, the strategy to compute the Picard group $\pi_0(\pic(A))$ goes as follows. First, we compute $\stext_A(\un,\un)$ and the $E_1$-page $E_1^{s,t,n}(\End)$. This gives some knowledge on the $E_{\infty}$-page of the latter spectral sequence. Then, we use the comparison result provided in Proposition \ref{pro:comparison}, and hope that is gives all the differentials in the spectral sequence for Picard groups given in Corollary \ref{cor:sspicard}. It turns out that in concrete situations, it is the case. We figure out some instances of this in the last part of this paper (see Part \ref{part:applications}).

\section{Lifting problem via descent} \label{sec:descentlift}

We now turn to a more general problem. Because of Palmieri's detection result (see Proposition \ref{pro:detection}), the computation of the stable Picard group of $A$ is strongly related to the determination of the set of all stable $A$-modules, whose restriction to all elementary Hopf subalgebras $E$ of $A$ are stably equivalent to $\un_{E}$. 

In this section, we deal with the following problem.

\begin{prob}[The lifting problem] \label{prob:lift}
Let $M_E \in \stinf(E)$ for all $E\in\E$. Does there exist an $A$-module $M$ such that $U_EM=M_E$ for all $E \in \E$?
\end{prob}

The reader who is familiar with this problem is probably disturbed by the absence of a compatibility condition on the various restrictions. Namely, one would expect that this problem is easier when one assumes that, for all $E' \subset E$, $U_{E'} M_E \simeq M_{E'}$. Unfortunately, the author does not know an analogue to Corollary \ref{cor:assemblyextension} in this case. This makes the computational part of our answer harder to set up.

Our approach to this problem is analogous to the one we used for Picard groups in Section \ref{sec:picarddescent}. We fix $\E$ and the category $\stinf(\E)$ in this section. Let $M_{\E}$ be a collection of stable representations $M_E$ of $E$, for each $E \in \E$.

We consider a category $\C$ which lives over $\stinf(\E)$, so that it makes sense to formulate the lifting problem in $\C$.

We first build a moduli space $\L_{\C}(M_{\E})$ for Problem \ref{prob:lift} in $\C$. This is Definition \ref{de:lifts}. Then, we show that this is in some sense functorial in $\C$, and that this functor preserves homotopy limits in $\C$. Finally, we obtain a spectral sequence computing the homotopy groups of $\L_{\stinf(A)}(M_{\E})$ as the Bousfield-Kan spectral sequence

\begin{de} \label{de:lifts}
Consider the category of stable symmetric monoidal $\infty$-categories over $\stinf(\E)$, {\it~i.e.} of diagrams
$$ \xymatrix{ \C \ar[d] \\ \stinf(\E)}.$$ The $\infty$-category of lifts of $M_{\E} \in \stinf(\E)$ (in $\C \rightarrow \stinf(\E)$) is the pullback
\begin{equation} \label{eq:spaceoflifts}
\xymatrix{ \tilde{\L}_{\C}(M_{\E}) \ar[r] \ar[d] & \C \ar[d] \\
[M_{\E}] \ar[r] & \stinf(\E) }
\end{equation}
where $[M_{\E}]$ is a shorthand for the full $\infty$-subcategory of $\stinf(\E)$ whose object is $M_{\E}$.
The space of lifts is $\L_{\C}(M_{\E}) := \iota \tilde{\L}_{\C}(M_{\E})$, the core of the $\infty$-category of lifts.
\end{de}

We first study a couple of examples of such spaces of lifts.

\begin{ex}
Consider $\stinf(\E)$ as a category over $\stinf(\E)$ via the identity map. Then, the space of lifts of $M_{\E}$ is exactly the category denoted $[M_{\E}]$ in Definition \ref{de:lifts}. Thus, $\pi_0(\L_{\stinf(\E)}(M_{\E}))$ is a point, $\pi_{t,1}(\L_{\stinf(\E)}(M_{\E})) \cong \gm_{\stinf(\E)}(M_{\E})$, and for all $s \geq 2$,
\begin{eqnarray*}
\pi_{s}(\L_{\stinf(\E)}(M_{\E}) & \cong & \pi_{s,0}(\hom_{\stinf(\E)}(M,M).
\end{eqnarray*}
\end{ex}

\begin{de} \label{de:liftable}
Let $\C$ be a category over $\stinf(\E)$.
We say that $M$ is liftable to $\C$ if $\L_{\C}(M)$ is non empty.
\end{de}

Before we can study further examples, we need the following result, whose interest to our approach should now be clear.

\begin{pro} \label{pro:liftlimits}
The functor that sends a category $\C$ over $\stinf(\E)$ to its space of lifts commutes with limits.
\end{pro}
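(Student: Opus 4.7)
The plan is to exhibit $\L_{(-)}(M_\E)$ as a composition of two limit-preserving operations. By Definition \ref{de:lifts}, $\L_\C(M_\E) = \iota \tilde{\L}_\C(M_\E)$, where $\tilde{\L}_\C(M_\E) := [M_\E] \times_{\stinf(\E)} \C$, so it suffices to show that both $\tilde{\L}_{(-)}(M_\E)$ and the core functor $\iota$ preserve limits.

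For the first factor, I will interpret $\tilde{\L}_{(-)}(M_\E)$ as the base-change functor along the fixed morphism $f : [M_\E] \to \stinf(\E)$, landing in $\infty\dashmod\cat^{\otimes}_{/[M_\E]}$. Base change $f^{\ast}$ is right adjoint to post-composition $f_{!}$ with $f$, and as a right adjoint it preserves all limits. Equivalently and more concretely: the pullback $[M_\E] \times_{\stinf(\E)} -$ is itself a limit construction, and since limits commute with limits, one obtains $[M_\E] \times_{\stinf(\E)} \lim_\bullet \C_\bullet \simeq \lim_\bullet\bigl([M_\E] \times_{\stinf(\E)} \C_\bullet\bigr)$ for any diagram $\C_\bullet$ in the slice.

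For the second factor, $\iota : \infty\dashmod\cat \to \Top$ is right adjoint to the inclusion of $\infty$-groupoids into $\infty$-categories, hence preserves limits. Composing these two observations gives the proposition.

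The one point that will require care is bookkeeping the notion of ``limit'' in the relevant slice $\infty$-categories. For a $J$-indexed diagram with $J$ not weakly contractible, a limit in $\infty\dashmod\cat^{\otimes}_{/\stinf(\E)}$ differs from the underlying $\infty\dashmod\cat$-limit by a correction involving the structure maps; I expect this to cause no real difficulty, as the base-change description respects the slice structure by construction. Moreover, for the main intended application---the totalization of the cosimplicial diagram of Theorem \ref{thm:totalization}, indexed by the weakly contractible category $\Delta$---this distinction disappears entirely, and the proof reduces to the familiar fact that pullbacks commute with totalizations.
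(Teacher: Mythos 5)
Your proposal is correct and follows essentially the same route as the paper's proof: both decompose the space of lifts as the pullback $[M_{\E}] \times_{\stinf(\E)} (-)$ followed by the core functor $\iota$, with the pullback preserving limits because limits commute with limits and $\iota$ preserving limits as a right adjoint. Your additional remarks on slice-category bookkeeping and the base-change adjunction are refinements the paper does not spell out, but they do not change the argument.
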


\begin{proof}
It is the composite of the pullback diagram defining the lifts, which commutes with limits, and the functor which associates to an $\infty$-category its core, which also commutes with limits as it is a right adjoint.
\end{proof}

\begin{ex}
Consider $\stinf(A)$ as a category over $\stinf(\E)$ via the functor $U_{\E}$. Then, the space of lifts of $M_{\E}$ has the following homotopy groups: $\pi_0(\L_{\stinf(A)}(M_{\E}))$ is the set of equivalence classes of stable $A$-modules $M$ such that $U_{\E}M = M_{\E}$. If $M_{\E}$ is liftable, we can choose a base point $M \in \L_{\stinf(A)}(M_{\E})$, then taking cores in the pullback diagram
\begin{equation*}
\xymatrix{ [M]_{\tilde{\L}_{\C}(M_{\E})} \ar[r] \ar[d] & [M]_{\C} \ar[d] \\
[M_{\E}] \ar[r]_{=} & [M_{\E}], }
\end{equation*}
where $[M]_{\D}$ stands for the full $\infty$-subcategory of $\D$ with one object $M$ gives an equivalence of $\infty$-categories $[M]_{\tilde{\L}_{\C}(M_{\E})} \cong [M]_{\C}$. Thus $\pi_{1}(\L_{\stinf(A)}(M_{\E}), M) \cong \gm_{\stinf(A)}(M)$, and for all $n \geq 2$,
\begin{eqnarray*}
\pi_{s}(\L_{\stinf(A)}(M_{\E}), M) & \cong & \pi_{s,0}(\End_{\stinf(A)}(M,M)),
\end{eqnarray*}
the latter being computed in Section \ref{sec:poincare}.
\end{ex}

In particular, the determination of $\pi_0(\L_{\stinf(A)}(M_{\E}))$ is equivalent to our original question \ref{prob:lift}. We now set up the tools which are necessary to compute this set.

\begin{lemma} \label{lemma:looplifts}
Let $U_{\E} : \C \rightarrow \stinf(\E)$ be a stable symmetric monoidal $\infty$-category over $\stinf(\E)$ and $M \in \stinf(\E)$. \\
For any $n \in \Z$, there is a canonical weak equivalence of spaces
$$ \L_{\C}(U_{\E}M) \eq \L_{\C}(U_{\E} \Omega^n M).$$
\end{lemma}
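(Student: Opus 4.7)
The plan is to exploit the fact that, in a stable $\infty$-category, $\Omega$ is an autoequivalence, together with the fact that the structure functor $U_{\E} : \C \to \stinf(\E)$ is exact (being a symmetric monoidal functor between stable symmetric monoidal $\infty$-categories) and hence commutes with $\Omega^n$ up to a canonical natural equivalence.

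First, I would observe that for any object $N$ in a stable $\infty$-category, the full sub-$\infty$-category $[N]$ on the single object $N$ is naturally identified with $B\gm(N)$, and that applying the autoequivalence $\Omega^n$ induces an equivalence $[N] \xrightarrow{\simeq} [\Omega^n N]$. In particular, there is a canonical equivalence $[M] \xrightarrow{\simeq} [\Omega^n M]$ inside $\stinf(\E)$. Similarly, $\Omega^n$ induces an autoequivalence of $\C$. The natural equivalence $U_{\E} \circ \Omega^n \simeq \Omega^n \circ U_{\E}$ then produces a commutative (up to canonical homotopy) diagram of $\infty$-categories
\begin{equation*}
\xymatrix{
[M] \ar[r] \ar[d]^{\simeq}_{\Omega^n} & \stinf(\E) \ar[d]^{\simeq}_{\Omega^n} & \C \ar[l]_{U_{\E}} \ar[d]^{\simeq}_{\Omega^n} \\
[\Omega^n M] \ar[r] & \stinf(\E) & \C \ar[l]^{U_{\E}}
}
\end{equation*}
in which each vertical arrow is an equivalence.

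Next I would take pullbacks of the two rows. Since limits in $\infty$-categories are invariant under levelwise equivalences of diagrams, the induced map between the pullback $\infty$-categories
\begin{equation*}
\tilde{\L}_{\C}(M) \xrightarrow{\simeq} \tilde{\L}_{\C}(\Omega^n M)
\end{equation*}
is an equivalence. Finally, the core functor $\iota$ is a right adjoint and hence commutes with limits (as already used in Proposition~\ref{pro:liftlimits}), so applying it yields the desired canonical weak equivalence $\L_{\C}(M) \simeq \L_{\C}(\Omega^n M)$.

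There is no real obstacle here; the only point to keep track of carefully is that the identification $U_{\E} \circ \Omega^n \simeq \Omega^n \circ U_{\E}$ is genuinely natural, so that the two pullback squares are related by a coherent equivalence of cospans rather than merely an unrelated pair of equivalences in each slot. This is guaranteed by the fact that $U_{\E}$ is an exact functor of stable $\infty$-categories, for which loop commutation is part of the structure.
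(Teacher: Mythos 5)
Your proposal is correct and is essentially the paper's argument: the paper's own proof consists of the one-line observation that the functors involved are functors of stable symmetric monoidal $\infty$-categories (hence commute with $\Omega^n$) and that the claim then follows from the pullback definition of $\L_{\C}$. You have simply spelled out the details (the equivalence of cospans induced by the autoequivalence $\Omega^n$ and the exactness of $U_{\E}$, followed by passage to pullbacks and cores), which is a faithful elaboration of the same route.
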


\begin{proof}
The functors under consideration are functors of stable symmetric monoidal $\infty$-categories. The result follows by definition of $\L_{\C}$.
\end{proof}

\begin{rk} The construction $\L_{\C}(M_{\E})$ is not functorial in $M_{\E}$. However, we can use the monoidal structure to produce some arrows between lift spaces.
\end{rk}

\begin{pro} \label{cor:lunmonoid}  \label{pro:liftmodule}
Let $\C \rightarrow \stinf(\E)$ be a stable symmetric monoidal $\infty$-category over $\stinf(\E)$. \begin{enumerate}
\item For all liftable $M,N \in \stinf(\E)$, the tensor product induces a map
$$\L_{\C}^\otimes : \L_{\C}(M) \times \L_{\C}(N) \rightarrow \L_{\C}(M \otimes N).$$
\item The map $\L_{\C}^\otimes$ is associative, {\it~i.e.} the following diagram commutes
$$ \xymatrix{\L_{\C}(L) \times\L_{\C}(M) \times \L_{\C}(N) \ar[d]^{id \times \L_{\C}^\otimes } \ar[r]^{\quad \L_{\C}^\otimes \times id} & \L_{\C}(L\otimes M) \times \L_{\C}(N) \ar[d]^{\L_{\C}^\otimes} \\
\L_{\C}(L) \times\L_{\C}(M \otimes N) \ar[r]_{\L_{\C}^\otimes} & \L_{\C}(L \otimes M \otimes N). } $$
\item In particular, the grouplike $\infty$-groupoid $\L_{\C}(\un)$ is an associative monoid. This recovers the ring structure on the reduced Picard $\infty$-groupoid.
\item The construction $\L_{\C}$ takes values in $\L_{\C}(\mathbb{1})$-bimodules, and for all liftable diagrams of modules $M,N$, $\L_{\C}^{\otimes}$ induces a map
$$ \L_{\C}(M) \times_{\L_{\C}(\mathbb{1})} \L_{\C}(N) \rightarrow \L_{\C}(M \otimes N),$$
where $\L_{\C}(M) \times_{\L_{\C}(\mathbb{1})} \L_{\C}(N)$ stands for the coequalizer
$$ \L_{\C}(M) \times \L_{\C}(\mathbb{1}) \times \L_{\C}(N) \rightrightarrows \L_{\C}(M) \times \L_{\C}(N).$$
\end{enumerate}
\end{pro}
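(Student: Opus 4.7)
The plan is to leverage the fact that all three $\infty$-categories in sight, namely $\C$, $\stinf(\E)$, and $[M_{\E}]$, carry compatible structures inherited from the symmetric monoidal map $U\colon \C\to\stinf(\E)$, and that the space of lifts is defined as a pullback, which is functorial and preserves whatever diagrams its inputs satisfy.

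First, for (1), recall that $\tilde\L_{\C}(M)=\C\times_{\stinf(\E)}[M]$. Since $U$ is symmetric monoidal, for objects $X,Y\in\C$ lying over $M,N$ respectively, $X\otimes_{\C}Y$ lies over $M\otimes_{\stinf(\E)}N$. Thus the tensor products $\otimes_{\C}\colon\C\times\C\to\C$ and $\otimes_{\stinf(\E)}\colon\stinf(\E)\times\stinf(\E)\to\stinf(\E)$, together with the canonical map $[M]\times[N]\to[M\otimes N]$, assemble via the universal property of the pullback into a map
$$\tilde\L_{\C}(M)\times\tilde\L_{\C}(N)\longrightarrow \tilde\L_{\C}(M\otimes N).$$
Passing to cores (a right adjoint, hence preserving finite products) yields $\L_{\C}^{\otimes}$. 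The key point is that taking the core and forming pullbacks both commute with products.

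For (2), associativity is inherited from the associativity constraint of the symmetric monoidal structures on $\C$ and $\stinf(\E)$, transported through the universal property of the pullback. Concretely, the two maps $\L_{\C}(L)\times\L_{\C}(M)\times\L_{\C}(N)\to\L_{\C}(L\otimes M\otimes N)$ obtained by bracketing either way agree because the corresponding maps out of $\tilde\L_{\C}(L)\times\tilde\L_{\C}(M)\times\tilde\L_{\C}(N)$ agree after composing with both projections $\tilde\L_{\C}(L\otimes M\otimes N)\to\C$ and $\tilde\L_{\C}(L\otimes M\otimes N)\to[L\otimes M\otimes N]$; on each factor this is exactly the symmetric monoidal associator.

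For (3), the unit $\un_{\stinf(\E)}$ is liftable since $\un_{\C}$ maps to it under the monoidal functor $U$, so $\L_{\C}(\un)$ is non-empty; combining $\L_{\C}^{\otimes}$ with the unit equivalence $\un\otimes\un\simeq\un$ and using (2) gives the associative monoid structure. For (4), the same construction with the unit and compatibility equivalences turns $\L_{\C}(M)$ into an $\L_{\C}(\un)$-bimodule. The factorization through the coequalizer amounts to verifying that the two composites obtained from $\L_{\C}(M)\times\L_{\C}(\un)\times\L_{\C}(N)$ (namely, act on $M$ then multiply, versus multiply then act on $N$) coincide; this is again the associativity from (2) combined with the unit constraint, hence follows formally.

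The only genuine obstacle is organizing the higher coherences at the $\infty$-categorical level. The cleanest way around this, which I would adopt, is to observe that the assignment $\C\mapsto\tilde\L_{\C}(-)$ is a limit-preserving functor from symmetric monoidal $\infty$-categories over $\stinf(\E)$ to $\infty$-categories (by Proposition \ref{pro:liftlimits}), so it automatically transports the $E_\infty$-algebraic structure on $\C$ relative to $\stinf(\E)$ to an analogous structure on the space of lifts, rather than checking coherences by hand.
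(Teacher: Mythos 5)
Your proposal is correct and follows essentially the same route as the paper: the map is induced by the tensor product through the pullback defining the space of lifts (using that the structure functor to $\stinf(\E)$ is monoidal), associativity is inherited from the associator, part (3) is the specialization to $M=N=\un$, and part (4) follows from the fact that the tensor product coequalizes $M\otimes\un\otimes N\rightrightarrows M\otimes N$. The paper's own proof is considerably terser and omits the points you make explicit (the universal property of the pullback, cores preserving finite products, and the transport of coherences via limit-preservation), so your write-up is a more careful version of the same argument rather than a different one.
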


\begin{proof}
We prove the three points separately. \begin{enumerate}
\item This map is induced by the tensor product: it sends two lifts $m,n$ of $M$ and $N$ respectively on $m \otimes n$, which is a lift of $M \otimes N$ as $U_{\E}$ is monoidal by Proposition \ref{de:adjforget}. It does similarly on $n$-arrows.
\item The map $\L_{\C}^{\otimes}$ is associative by construction, and associativity of $\otimes$ in the category $\stinf(\E)$.
\item Apply the two first assertions to $M = N = \un$. The result follows.

\item The last assertion comes from the observation that $\L_{\C}(M \otimes N)$ coequalize the diagram
$$ \L_{\C}(M) \times \L_{\C}(\mathbb{1}) \times \L_{\C}(N) \rightrightarrows \L_{\C}(M) \times \L_{\C}(N)$$
where the two arrows are induced by the action, as the tensor product certainly coequalize
$M \otimes \un \otimes N \rightrightarrows M \otimes N$.
\end{enumerate}
\end{proof}

\begin{thm} \label{cor:lifttot}
There is an equivalence of spaces
$$ \L(M) \eq Tot\large{(} \L_{\bullet+1}(M)\large{)},$$
where $$\L_{n+1}(M) := \L_{Mod_{\stinf(A)}(T^{\otimes n+1})}(M),$$
 and the faces and degeneracies induced by the faces and degeneracies of the cosimplicial object
 \begin{equation*} 
 \stinf(A) \cong Tot\left(  \xymatrix{\stinf(E) \ar@<-.5ex>[r] \ar@<.5ex>[r] & \mod_{\stinf(E)}(T_E) \ar@<-.7ex>[r] \ar@<-.0ex>[r] \ar@<.7ex>[r]  & \mod_{\stinf(E)}(T_E^{\otimes 2}) \hdots}\right).
\end{equation*}
\end{thm}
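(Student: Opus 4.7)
The strategy is to apply the limit-preserving functor $\L_{(-)}(M)$ of Proposition~\ref{pro:liftlimits} to the totalization equivalence of Theorem~\ref{thm:totalization}, exactly as Theorem~\ref{thm:picardss} was obtained from the same totalization using Proposition~\ref{pro:piclimits}. The formal structure of the argument is therefore parallel to the Picard case.

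First, I would promote the cosimplicial diagram of stable symmetric monoidal $\infty$-categories appearing in Theorem~\ref{thm:totalization} to a cosimplicial diagram in the slice $\infty$-category of stable symmetric monoidal $\infty$-categories over $\stinf(\E)$. Under the identification $\mod_{\stinf(A)}(T^{\otimes n+1}) \simeq \mod_{\stinf(\E)}(T^{\otimes n})$ used in the proof of Theorem~\ref{thm:totalization}, each layer carries a canonical forgetful functor down to $\stinf(\E)$, and these are compatible with the cofaces and codegeneracies coming from the cobar construction on $T$. The structure map $U_{\E} : \stinf(A) \to \stinf(\E)$ itself factors as the composite $\stinf(A) \to \mod_{\stinf(A)}(T) \simeq \stinf(\E)$ provided by the zeroth coface together with Corollary~\ref{cor:assemblyextension}, so the equivalence of Theorem~\ref{thm:totalization} upgrades to an equivalence in the slice over $\stinf(\E)$.

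Second, I would apply Proposition~\ref{pro:liftlimits}: the functor $\L_{(-)}(M)$ from stable symmetric monoidal $\infty$-categories over $\stinf(\E)$ to spaces preserves limits. Since totalization is a limit, applying $\L_{(-)}(M)$ to the upgraded cosimplicial equivalence yields
\begin{equation*}
\L_{\stinf(A)}(M) \simeq \L_{\mathrm{Tot}(\mod_{\stinf(A)}(T^{\otimes \bullet+1}))}(M) \simeq \mathrm{Tot}\bigl(\L_{\mod_{\stinf(A)}(T^{\otimes \bullet+1})}(M)\bigr),
\end{equation*}
which is the desired equivalence, and the cosimplicial structure on the right is inherited from the cobar cosimplicial object.

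The main obstacle is the first step: verifying that the totalization of Theorem~\ref{thm:totalization} is a totalization in the slice $\infty$-category over $\stinf(\E)$ rather than merely in stable symmetric monoidal $\infty$-categories. This reduces to a naturality check on the cobar cosimplicial object, showing that the projections to $\stinf(\E)$ are compatible with the cofaces and codegeneracies; once this compatibility is in place, the conclusion is a formal consequence of preservation of limits by $\L_{(-)}(M)$.
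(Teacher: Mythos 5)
Your proposal is correct and follows the same route as the paper: the paper's proof is precisely the one-line observation that Proposition~\ref{pro:liftlimits} applied to the totalization of Theorem~\ref{thm:totalization} gives the result. Your additional care in checking that the cobar cosimplicial object lives in the slice over $\stinf(\E)$ (so that $\L_{(-)}(M)$ can actually be applied levelwise) is a point the paper leaves implicit, and is a worthwhile elaboration rather than a departure.
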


\begin{proof}
This is a consequence of Proposition \ref{pro:liftlimits}, applied to the totalization given in Theorem \ref{thm:totalization}.
\end{proof}

\begin{cor} \label{thm:liftss}
Let $M_{\E}\in\stinf(\E)$. Let
\begin{equation}
E_1(\L)^{s,n} = (\pi_{s}\L_{\mod_{\stinf(A)}(T^{n+1})}(M_{\E})).
\end{equation}
There are  obstruction classes to the lift problem in $E_1^{s,s+1}$ for each $s \geq 1$. In particular, $M_{\E}$ is liftable whenever these groups are zero.

Suppose that $M_{\E}$ is a liftable module, and let $M \in \stinf(A)$ be such a lift. There is a Bousfield-Kan spectral sequence
$$E_1(\L)^{s,n} \Rightarrow \pi_{s-n}( \L_{\stinf(A)}(M_{\E}),M).$$

This spectral sequence is called the spectral sequence for the space of lifts.
\end{cor}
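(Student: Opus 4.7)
The plan is to apply the Bousfield--Kan spectral sequence of a cosimplicial space to the equivalence
\begin{equation*}
\L_{\stinf(A)}(M_{\E}) \simeq Tot(\L_{\bullet+1}(M_{\E}))
\end{equation*}
already established in Theorem \ref{cor:lifttot}. This decomposes $\L_{\stinf(A)}(M_{\E})$ as the inverse limit of the tower $\{Tot_k(\L_{\bullet+1}(M_{\E}))\}_{k \geq 0}$ of partial totalizations, and the whole corollary follows by a standard analysis of this tower.

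First I would settle the obstruction statement. An element of $\pi_0 \L_{\stinf(A)}(M_{\E})$ amounts to a coherent sequence of lifts through the tower, so its existence is controlled by the usual obstruction theory for towers of fibrations of the form $Tot_k \to Tot_{k-1}$ (Bousfield--Kan, \emph{Homotopy Limits, Completions and Localizations}, Ch.~X). Concretely, given a partial lift at stage $s-1$, the obstruction to extending it one step further lies in a natural subquotient of the group $\pi_s$ of the fiber of $Tot_s \to Tot_{s-1}$, and the matching-space description of this fiber identifies that subquotient with a piece of $E_1^{s,s+1}$ in our indexing. Vanishing of $E_1^{s,s+1}$ for each $s \geq 1$ therefore suffices to successively extend a lift all the way up the tower, which is exactly the claimed liftability criterion.

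Next, suppose $M_{\E}$ is liftable and let $M \in \stinf(A)$ be a lift, used as basepoint of $Tot(\L_{\bullet+1}(M_{\E}))$. I would then apply the ordinary pointed Bousfield--Kan spectral sequence for the cosimplicial pointed space $(\L_{\bullet+1}(M_{\E}), M)$. Its $E_1$-page is the Moore normalized cochain complex of the cosimplicial abelian group $\pi_s \L_{\bullet+1}(M_{\E})$, giving
\begin{equation*}
E_1^{s,n} = N^n \pi_s \L_{\mod_{\stinf(A)}(T^{\otimes n+1})}(M_{\E}),
\end{equation*}
with differentials of bidegree $(r,r-1)$, and strongly convergent to $\pi_{s-n}(\L_{\stinf(A)}(M_{\E}), M)$ in the range $s - n \geq 0$. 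This is the exact same machinery that yielded Corollary \ref{cor:sspicard}, applied here to the functor $\L$ in place of $\pic$; the fact that $\L$ is a right adjoint (Proposition \ref{pro:liftlimits}) is what lets us push the totalization of Theorem \ref{cor:lifttot} through it.

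The main obstacle is the careful treatment of the fringed part of the spectral sequence, before a basepoint is chosen: for low $s$ the $E_r$-terms are not abelian groups in the usual sense and the obstructions are well-defined only in canonical subquotients of $E_1^{s,s+1}$. The claim in the statement should accordingly be read as: each obstruction lies in such a subquotient of $E_1^{s,s+1}$, and is therefore automatically annihilated whenever $E_1^{s,s+1}=0$. This subtlety is classical, and once it is disposed of, the proposition reduces entirely to invoking the Bousfield--Kan theory.
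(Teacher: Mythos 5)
Your proposal is correct and follows essentially the same route as the paper: the paper's proof likewise just invokes the Bousfield--Kan spectral sequence of the totalization from Theorem \ref{cor:lifttot}, together with the standard obstruction theory for lifting a basepoint along the tower of partial totalizations. Your additional care about the fringed region and the subquotient in which the obstructions live is a welcome precision that the paper leaves implicit.
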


\begin{proof}
This is the Bousfield-Kan spectral sequence associated with the equivalence
$ \L(M) \steq Tot\large{(} \L_{\bullet+1}(M)\large{)}$.  We then apply the general machinery associated to such a Bousfield-Kan spectral sequence: the obstruction theory to lift a base point along partial totalizations and the associated Bousfield-Kan spectral sequence.
\end{proof}

As in the last section, this settles the machinery we need to set up the spectral sequence which resolves the Problem \ref{prob:lift}. However, this spectral sequence is not yet computable. We need two more ingredients to make the computations accessible. First, we need to compare this spectral sequence to a spectral sequence we know something about. This is done in Proposition \ref{pro:bkssendautm}. Then, we can use the pairing $\End(M) \times \End(N) \rightarrow \End(M \otimes N)$, and in particular this pairing for $N = \un$, to have some extra structure on the spectral sequence for the space of lifts, through this comparison  map. This is Proposition \ref{pro:pairinginss}.

\begin{pro} \label{pro:bkssendautm}
Let $M$ be a stable $A$-module. Let $M_{\E} = U_{\E}M$. There are spectral sequences
\begin{equation}
E_1^{s,t,n}(\End) \cong \ext_A^{-s,t}( M, T^{\otimes n+1} \otimes M) \Rightarrow \ext_A^{-s-n,t}(M,M), 
\end{equation}

and

\begin{equation}
E_1^{s,n}(\gm(M)) =  \pi_s(\gm_A( M,T^{\otimes {n+1}}\otimes M)) \Rightarrow \pi_{s-n}(\gm_A(M)).
\end{equation}

Moreover, the natural transformation $\gm \rightarrow \End$ provided by Lemma \ref{lemma:endMandlimits} induces a morphism of spectral sequences 
$$ E_1^{s,n}(\gm) \rightarrow E_1^{s,0,n}(\End)$$
which is an isomorphism in degrees $(s,n)$ for  $s \geq 1$ starting at the $E_1$-page.
\end{pro}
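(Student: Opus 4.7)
The plan is to follow the blueprint of Proposition \ref{pro:bkssendaut}, replacing the unit $\un$ by the distinguished module $M$ and invoking Lemma \ref{lemma:endMandlimits} (the variant for pointed symmetric monoidal $\infty$-categories) in place of Lemma \ref{lemma:holimend}.

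First, I would promote each layer $\mod_{\stinf(A)}(T^{\otimes n+1})$ of the cosimplicial diagram in Theorem \ref{thm:totalization} to a pointed stable symmetric monoidal $\infty$-category by distinguishing the object $T^{\otimes n+1}\otimes M$ obtained by extension of scalars from $M$. Each coface is, up to canonical equivalence, extension of scalars along a map of commutative algebras $T^{\otimes n+1}\to T^{\otimes n+2}$ in $\stinf(A)$, and each codegeneracy is the corresponding restriction, so the distinguished objects are canonically compatible with all the cosimplicial structure maps. This enhances the diagram of Theorem \ref{thm:totalization} to a cosimplicial object in the category of pointed stable symmetric monoidal $\infty$-categories whose totalization is $(\stinf(A), M)$.

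Applying the limit-preserving functor $\End_{(-)}(-)$ from Lemma \ref{lemma:endMandlimits} then yields a weak equivalence
$$\End_{\stinf(A)}(M) \simeq Tot\bigl(\End_{\mod_{\stinf(A)}(T^{\otimes \bullet+1})}(T^{\otimes \bullet+1}\otimes M)\bigr).$$
The free-forgetful adjunction $T^{\otimes n+1}\otimes(-) \dashv U$ identifies each layer with the mapping spectrum $\hom_{\stinf(A)}(M, T^{\otimes n+1}\otimes M)$. The associated Bousfield-Kan spectral sequence has $E_1^{s,t,n} = \pi_{s,t}\hom_{\stinf(A)}(M, T^{\otimes n+1}\otimes M)$, which by Proposition \ref{pro:piiextgroups} equals $\ext_A^{-s,t}(M, T^{\otimes n+1}\otimes M)$, and it converges to the corresponding Ext-group of $\hom_{\stinf(A)}(M,M)$ in the expected cosimplicial degree. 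Running the same argument with $\gm$ in place of $\End$, still covered by Lemma \ref{lemma:endMandlimits}, produces the second spectral sequence.

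For the comparison statement, the natural inclusion of the core $\gm_{\C}(X) \hookrightarrow \End_{\C}(X)$ is a natural transformation of functors on pointed stable symmetric monoidal $\infty$-categories; hence it is compatible with the cosimplicial structure and induces a morphism of the associated Bousfield-Kan spectral sequences. Because restricting to invertible components of a spectrum only modifies $\pi_0$, the induced map is an isomorphism on $\pi_s$ for every $s \geq 1$, which yields the asserted agreement of $E_1$-pages in those degrees. The main point that will require some care is verifying that the cosimplicial structure maps of Theorem \ref{thm:totalization} genuinely lift to the pointed setting so that $\End$ and $\gm$ apply termwise in a strictly cosimplicial fashion; once this bookkeeping is done, the rest of the argument is formal and parallels Proposition \ref{pro:bkssendaut} verbatim.
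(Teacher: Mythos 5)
Your proposal is correct and follows essentially the same route as the paper: point each layer of the totalization from Theorem \ref{thm:totalization} by the extension-of-scalars base point $T^{\otimes n+1}\otimes M$, apply the limit-preserving functors of Lemma \ref{lemma:endMandlimits}, identify the layers via the free-forgetful adjunction and Proposition \ref{pro:piiextgroups}, and obtain the comparison from the natural transformation $\gm \rightarrow \End$. The paper's own proof is just a terser version of this argument, so no further commentary is needed.
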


\begin{proof}
Apply the functor $\End_{\C}$ (respectively $\gm_{\C}$ to the pointed categories appearing in Theorem \ref{thm:totalization}, using the base point $M \in \stinf(A)$, and $T_{\E}^{\otimes n} \otimes M_{\E} \in \mod_{\stinf(\E)}(T_{\E}^{\otimes n+1})$. The desired spectral sequence is the associated Bousfield-Kan spectral sequence.

We conclude by naturality of inclusion of units $\gm \rightarrow \End$.
\end{proof}

\begin{rk} \label{rk:ceendm}
Before we go further, observe that the previous spectral sequence is again a sort of generalization of the Cartan Eilenberg spectral sequence. Indeed, in the particular case when $\E$ has only one element $E$, the spectral sequence computing extension groups reduces to a spectral sequence
\begin{equation} \label{eq:e1pageendm}
E_1^{s,t,n}(\End(M)) \cong N^n\ext_E^{-s,t}( U_E M, T^{\otimes \bullet} \otimes U_EM) \Rightarrow \ext_A^{-s-n,t}(M,M). 
\end{equation}
\end{rk}

Although the $E_1$-page of this spectral sequence does seems as mysterious as its abutment, note that it only relies on the stable category of $E$-modules, for quasi-elementary $E$. Indeed, the $E_1$-page does only depend on the stable module $U_{\E}M$ (Remark \ref{rk:ceendm} is a special case of this fact). Moreover, this spectral sequence computes the homotopy groups of the space of lifts. Firstly, both its $E_1$-term and abutment are completely computable in principle for any finite dimensional Hopf algebra $A$ (use a cobar complex to do the explicit computation for example) giving information about its differentials. Secondly, this spectral sequence compares to the spectral sequence for the space of lifts, so that we get some differentials for free in the latter. The comparison map is given in the following proposition.

\begin{pro} \label{pro:comparisonliftendm}
Via the canonical identification $\pi_{i-1}(\gm_{(-)}(M_{-})) \eq \pi_i(\L_{-}(M_{-}))$, for $i \geq 1$, there is an isomorphism vector spaces
\begin{equation}
E^{s-1,n}_r(\gm) \cong E_r^{s,n}(M_{\E}).
\end{equation}
Moreover, the differentials originating at $E_r^{s,n}$ for $s-n \geq 1$ and $r \leq s$ in the descent spectral sequence converging to $\pi_{s-n}(\L_{\stinf(A)}(M_{\E}))$ are identified with the ones of $E^{s-1,n}_r(\gm)$.
\end{pro}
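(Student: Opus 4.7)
The plan is to mirror the proof of Proposition \ref{pro:comparison} (which invokes \cite[Section 5]{MS14}) in the relative setting of the space of lifts. At each cosimplicial level $n$, I will establish the level-wise delooping identification: the lift space $\L_{\mod_{\stinf(A)}(T^{\otimes n+1})}(M_{\E})$ is canonically pointed by the image of $M$ under the extension-of-scalars functor $T^{\otimes n+1} \otimes (-)$, and unpacking the pullback square of Definition \ref{de:lifts} expresses its loop space at this basepoint as the fiber of the restriction map from $\gm_{\mod_{\stinf(A)}(T^{\otimes n+1})}(T^{\otimes n+1} \otimes M)$ to $\gm_{\stinf(\E)}(M_{\E})$. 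In the range we care about, this realizes the canonical identification $\pi_{i-1}(\gm_{\mod_{\stinf(A)}(T^{\otimes n+1})}(T^{\otimes n+1} \otimes M)) \cong \pi_i(\L_{\mod_{\stinf(A)}(T^{\otimes n+1})}(M_{\E}), T^{\otimes n+1} \otimes M)$ for $i \geq 1$ that is asserted in the statement.

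Next, I observe that the cosimplicial structure on the two sides is induced from the common underlying cosimplicial $\infty$-category of Theorem \ref{thm:totalization} through the limit-preserving functors $\L_{(-)}(M_{\E})$ and $\gm_{(-)}(-)$, via Proposition \ref{pro:liftlimits} and Lemma \ref{lemma:endMandlimits}. The level-wise identification is therefore natural in the cosimplicial structure maps, and the homotopy shift $s-1 \mapsto s$ commutes with faces and degeneracies. Comparing the Bousfield-Kan spectral sequences of the two resulting cosimplicial pointed spaces, this yields the isomorphism $E_r^{s-1, n}(\gm) \cong E_r^{s, n}(M_{\E})$ of $E_r$-pages in the stated range, and the matching of differentials follows by naturality of the Bousfield-Kan construction.

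The range constraints $s - n \geq 1$ and $r \leq s$ encode the standard fringed region of the Bousfield-Kan spectral sequence. For $s - n = 0$ the abutment sits in $\pi_0$ of a pointed space where the usual additive structure breaks down, while for $r > s$ differentials can originate from or land in the fringed row where the loop-space identification does not extend. Inside the stated range all groups involved are abelian and the argument is literal.

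The hard part will be step one, making the delooping identification precise at each level: the lift space is only pointed after a choice of lift $M$, and the fiber sequence with $\gm_{\stinf(\E)}(M_{\E})$ has to be controlled compatibly with the cosimplicial structure so that it survives totalization and yields a genuine isomorphism, not merely an exact-sequence comparison. This is essentially a base-pointed variant of the analysis carried out in \cite{MS14} for Picard spaces, and once in place the remaining cosimplicial assembly and spectral sequence comparison are formal.
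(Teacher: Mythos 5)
Your overall architecture (level-wise delooping identification, naturality in the cosimplicial direction coming from Proposition \ref{pro:liftlimits} and Lemma \ref{lemma:endMandlimits}, then \cite[Section 5]{MS14} for the differentials and the fringe) is the same as the paper's, which disposes of the proposition in two sentences. But your step one contains a genuine error. You claim that the loop space of $\L_{\mod_{\stinf(A)}(T^{\otimes n+1})}(M_{\E})$ at the basepoint $T^{\otimes n+1}\otimes M$ is the \emph{fiber} of the restriction map $\gm_{\mod_{\stinf(A)}(T^{\otimes n+1})}(T^{\otimes n+1}\otimes M)\rightarrow \gm_{\stinf(\E)}(M_{\E})$. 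It is not, and no ``range'' makes it so: that fiber sits in a long exact sequence with $\pi_*\gm_{\stinf(\E)}(M_{\E})$, which is nontrivial, so it cannot realize the canonical identification $\pi_{i-1}(\gm_{\C}(M))\cong\pi_i(\L_{\C}(M_{\E}),M)$ that the proposition asserts. You correctly sense the tension in your last paragraph, but you leave it as ``the hard part'' rather than resolving it.

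The resolution is already in Definition \ref{de:lifts} and the Example following it: $[M_{\E}]$ is the \emph{full} $\infty$-subcategory of $\stinf(\E)$ on the object $M_{\E}$, so its core is (a delooping of) $\gm_{\stinf(\E)}(M_{\E})$ rather than a point. Pulling back $\C\rightarrow\stinf(\E)$ along a fully faithful inclusion does not change mapping spaces, so $[M]_{\tilde{\L}_{\C}(M_{\E})}\cong[M]_{\C}$ and hence $\Omega_M\L_{\C}(M_{\E})\simeq\gm_{\C}(M)$ with no fiber condition at all. (Had the lift space been defined by pulling back along the point $\{M_{\E},\mathrm{id}\}$ instead of the full subcategory, your fiber description would be correct but the proposition would be false.) With this correction your ``hard part'' evaporates: the level-wise identification is an equivalence of pointed spaces, it is natural in $\C$ over $\stinf(\E)$ because it is induced by the pullback construction itself, and the rest of your argument goes through as written.
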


\begin{proof}
The $E_1$-page deduced by construction of the spectral sequence, using 
\begin{equation*}
\pi_{i-1}(\gm_{(-)}(M_{-})) \eq \pi_i(\L_{-}(M_{-})).
\end{equation*}
The comparison of differentials is given by \cite[Section 5]{MS14}).
\end{proof}

Finally, here is the last theoretical tool we need to carry-on computations: the action of $\End(\un)$ on $\End(M)$.

\begin{pro} \label{pro:pairinginss}
Let $\C$ be a pointed stable symmetric monoidal $\infty$-category  with distinguished object $M_{\C}$.
There are pairings
\begin{equation*}
\End_{\C}(\un) \wedge \End_{\C}(M_{\C}) \rightarrow \End_{\C}(M_{\C})
\end{equation*}
and 
\begin{equation*}
\gm_{\C}(\un) \wedge \gm_{\C}(M_{\C}) \rightarrow \gm_{\C}(M_{\C}).
\end{equation*}
These induces pairings of spectral sequences
\begin{equation*}
E_1^{s,t,n}(\End) \otimes E_1^{s',t',n'}(\End(M)) \rightarrow E_1^{s+s',t+t',n+n'}(\End(M))
\end{equation*}
and
\begin{equation*}
E_1^{s,n}(\gm) \otimes E_1^{s',n'}(\gm(M)) \rightarrow E_1^{s+s',n+n'}(\gm(M))
\end{equation*}
converging to the evident pairing in homotopy.
\end{pro}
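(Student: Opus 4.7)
The plan is to construct the pairings first as maps of spectra in a single stable symmetric monoidal $\infty$-category, then to check that these maps are natural with respect to the cosimplicial diagram appearing in Theorem~\ref{thm:totalization}, and finally to invoke the standard construction of a pairing of Bousfield-Kan spectral sequences from a pairing of cosimplicial spectra.

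First I would construct the pointwise pairing. For any stable symmetric monoidal $\infty$-category $\C$ with distinguished object $M_{\C}$, the symmetric monoidal structure gives a map
\begin{equation*}
\End_{\C}(\un) \wedge \End_{\C}(M_{\C}) \xrightarrow{\otimes} \End_{\C}(\un \otimes M_{\C}) \xrightarrow{\simeq} \End_{\C}(M_{\C}),
\end{equation*}
where the first map sends $(f,g)$ to $f \otimes g$ and the second is induced by the unit isomorphism $\un \otimes M_{\C} \simeq M_{\C}$. Restricting to the subspaces of invertible components gives the corresponding pairing $\gm_{\C}(\un) \wedge \gm_{\C}(M_{\C}) \to \gm_{\C}(M_{\C})$, since the tensor product of invertible maps is invertible. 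Both pairings are associative and unital with respect to the $E_\infty$-structures on $\End_{\C}(\un)$ and $\gm_{\C}(\un)$ coming from composition/tensor, by the coherence of the symmetric monoidal structure.

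Next I would promote this to a cosimplicial pairing. Applied levelwise to the cosimplicial diagram of pointed stable symmetric monoidal $\infty$-categories $\mod_{\stinf(A)}(T^{\otimes \bullet+1})$ (pointed by $T^{\otimes \bullet+1}$ for the unit spectral sequence and by $T^{\otimes \bullet+1} \otimes M_{\C}$ for the module case), one obtains a map of cosimplicial spectra
\begin{equation*}
\End_{\mod(T^{\otimes \bullet+1})}(\un) \wedge \End_{\mod(T^{\otimes \bullet+1})}(M) \longrightarrow \End_{\mod(T^{\otimes \bullet+1})}(M),
\end{equation*}
and similarly for $\gm$. Functoriality of the pairing under the structure maps of the cosimplicial object follows from the fact that the face and degeneracy functors in the diagram of Theorem~\ref{thm:totalization} are symmetric monoidal, hence commute (up to canonical equivalence) with the tensor product used to build the pairing.

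Finally, I would invoke the general fact (which is how one builds products in any Bousfield-Kan spectral sequence associated to a cosimplicial object equipped with a pairing) that a pairing of cosimplicial spectra $X^\bullet \wedge Y^\bullet \to Z^\bullet$ compatible with coface maps induces a pairing of the associated Bousfield-Kan spectral sequences, converging to the induced pairing on the homotopy groups of the totalizations; see for instance the discussion in \cite{MNN15,MS14} adapted to this setup. The target of the induced pairing on $E_1$-pages is $E_1^{s+s', t+t', n+n'}(\End(M))$ because Dold-Kan turns the tensor product of normalized complexes into the right shift in $n$; the induced convergence to the evident pairing on $\pi_*(\End_{\stinf(A)}(M))$ (respectively $\pi_*(\gm_{\stinf(A)}(M))$) follows from Theorem~\ref{thm:totalization} together with Lemmas~\ref{lemma:holimend} and \ref{lemma:endMandlimits}, which ensure that the totalizations of the cosimplicial spectra computing endomorphisms and automorphisms are respectively $\End_{\stinf(A)}(\un)$, $\End_{\stinf(A)}(M)$, $\gm_{\stinf(A)}(\un)$ and $\gm_{\stinf(A)}(M)$, and the limit of a pairing is the expected pairing of limits.

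The main obstacle is bookkeeping rather than genuine difficulty: one must verify that all coface and codegeneracy maps in the cosimplicial diagram of Theorem~\ref{thm:totalization} are genuinely symmetric monoidal and that the distinguished objects are preserved, so that the levelwise pairing assembles into a map of cosimplicial spectra. Granted that, the resulting pairing of spectral sequences and its convergence are formal consequences of the standard theory of pairings of Bousfield-Kan spectral sequences.
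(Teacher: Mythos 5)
Your proposal is correct and takes the same route as the paper, whose entire proof is the one-line remark that the module structure comes from the general theory of pairings in Bousfield--Kan spectral sequences. You have simply unwound the details the paper leaves implicit: the pointwise pairing via $f\otimes g$ and the unit equivalence $\un\otimes M_{\C}\simeq M_{\C}$, its levelwise compatibility with the cosimplicial diagram of Theorem \ref{thm:totalization}, and the standard passage from a pairing of cosimplicial spectra to a pairing of the associated spectral sequences.
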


\begin{proof}
This module structure comes from the general pairings in Bousfield-Kan spectral sequences.
\end{proof}

\part{Applications: $\A(1)$-modules revisited} \label{part:applications}

As an application, we show how to use the techniques developed in Part $1$ and $2$ to the study of the stable category of $\A(1)$-modules. In this setting, we recover the computation of the Picard group of the stable category of $\A(1)$-modules which is due to Adams and Priddy in \cite{AP76}. This was particularly important in the proof of their uniqueness theorem.

Using our technique, it is also possible to recover and extend the classification of lifts of $\E(1)$-modules to $\A(1)$-modules from \cite{Pow15}. However, in \textit{loc cit}, the author needed an indecomposability hypothesis which is not needed for our approach. The computation being quite tedious, we only show how this applies in a couple of examples at the end of this part.

The plan of this part follows the study made in Part \ref{part:descent} in the special case $A = \A(1)$. We first compute $\ext_{\A(1)}^{s,t}(\F,\F)$, and use Poincar\'e duality (see section \ref{sec:poincare}) to identify $\pi_{s,t}(\End_{\A(1)}(\F)$. This gives the abutment of the spectral sequence for the endomorphism space of the unit, and we can retro-engineer it to find some of the differentials in this spectral sequence. Finally, the comparison between the spectral sequence for the endomorphism space of the units and the descent spectral sequence for the Picard space gives us all the differentials in the latter spectral sequence that we need to conclude. The conclusion of this computation is Theorem \ref{thm:pica1}, which is originally due to Adams and Priddy.

\section{Generalities about $\A(1)$-modules}

The Hopf algebra $\A(1)$ is a finite dimensional cocommutative Hopf algebra over $\F$. It is classically defined as the sub-Hopf algebra of the modulo $2$ Steenrod algebra generated by $Sq^1$ and $Sq^2$. For our purposes, it will be more convenient to consider it using generators and relations. Its presentation is given in the following definition.

\begin{de}
Let $\A(1)$ be the sub-Hopf algebra of the modulo $2$ Steenrod algebra generated by the Steenrod squares $Sq^1$ and $Sq^2$. As an algebra, 
$$\A(1) \eq \frac{\F\langle Sq^1,Sq^2\rangle}{(Sq^1)^2, (Sq^2)^2 + Sq^1Sq^2Sq^1}.$$
Let $\E(1)$ be the sub-Hopf algebra of $\A(1)$ generated by the two first Milnor operations $Q_0 = Sq^1$ and $Q_1 = Sq^1 Sq^2 + Sq^2 Sq^1$. It is a primitively generated exterior algebra on $Q_0$ and $Q_1$.
\end{de}

The algebra $\A(1)$ is represented graphically in figure \ref{fig:a1}.

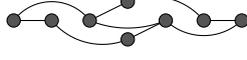
\begin{figure}
\definecolor{qqqqff}{rgb}{0.3333333333333333,0.3333333333333333,0.3333333333333333}
\begin{tikzpicture}[line cap=round,line join=round,>=triangle 45,x=0.5cm,y=0.5cm]
\clip(4.3235580728719665,10.790907037910573) rectangle (11.444061329611309,13.531444373837745);
\draw (6.,7.)-- (7.,7.);
\draw (9.,7.)-- (10.,7.);
\draw [shift={(7.,6.25)}] plot[domain=0.6435011087932844:2.498091544796509,variable=\t]({1.*1.25*cos(\t r)+0.*1.25*sin(\t r)},{0.*1.25*cos(\t r)+1.*1.25*sin(\t r)});
\draw [shift={(9.,6.25)}] plot[domain=0.6435011087932844:2.498091544796509,variable=\t]({1.*1.25*cos(\t r)+0.*1.25*sin(\t r)},{0.*1.25*cos(\t r)+1.*1.25*sin(\t r)});
\draw [shift={(8.,7.75)}] plot[domain=3.7850937623830774:5.639684198386302,variable=\t]({1.*1.25*cos(\t r)+0.*1.25*sin(\t r)},{0.*1.25*cos(\t r)+1.*1.25*sin(\t r)});
\draw (5.,12.)-- (6.,12.);
\draw (7.,12.)-- (8.,12.5);
\draw (8.,11.5)-- (9.,12.);
\draw (10.,12.)-- (11.,12.);
\draw [shift={(6.,11.25)}] plot[domain=0.6435011087932844:2.498091544796509,variable=\t]({1.*1.25*cos(\t r)+0.*1.25*sin(\t r)},{0.*1.25*cos(\t r)+1.*1.25*sin(\t r)});
\draw [shift={(8.,14.4)}] plot[domain=4.31759786068493:5.10718010008445,variable=\t]({1.*2.6*cos(\t r)+0.*2.6*sin(\t r)},{0.*2.6*cos(\t r)+1.*2.6*sin(\t r)});
\draw [shift={(10.,13.05)}] plot[domain=3.9513762261599594:5.4734017346094195,variable=\t]({1.*1.45*cos(\t r)+0.*1.45*sin(\t r)},{0.*1.45*cos(\t r)+1.*1.45*sin(\t r)});
\draw [shift={(7.335714285714286,13.092857142857145)}] plot[domain=3.82732216449608:5.107498470019571,variable=\t]({1.*1.7258242632884464*cos(\t r)+0.*1.7258242632884464*sin(\t r)},{0.*1.7258242632884464*cos(\t r)+1.*1.7258242632884464*sin(\t r)});
\draw [shift={(8.664285714285713,10.907142857142857)}] plot[domain=0.685729510906286:1.9659058164297778,variable=\t]({1.*1.7258242632884444*cos(\t r)+0.*1.7258242632884444*sin(\t r)},{0.*1.7258242632884444*cos(\t r)+1.*1.7258242632884444*sin(\t r)});
\begin{scriptsize}
\draw [fill=qqqqff] (6.,7.) circle (2.5pt);
\draw [fill=qqqqff] (7.,7.) circle (2.5pt);
\draw [fill=qqqqff] (8.,7.) circle (2.5pt);
\draw [fill=qqqqff] (9.,7.) circle (2.5pt);
\draw [fill=qqqqff] (10.,7.) circle (2.5pt);
\draw [fill=qqqqff] (5.,12.) circle (2.5pt);
\draw [fill=qqqqff] (6.,12.) circle (2.5pt);
\draw [fill=qqqqff] (7.,12.) circle (2.5pt);
\draw [fill=qqqqff] (9.,12.) circle (2.5pt);
\draw [fill=qqqqff] (10.,12.) circle (2.5pt);
\draw [fill=qqqqff] (11.,12.) circle (2.5pt);
\draw [fill=qqqqff] (8.,12.5) circle (2.5pt);
\draw [fill=qqqqff] (8.,11.5) circle (2.5pt);
\end{scriptsize}
\end{tikzpicture}

\caption{The subalgebra $\A(1)$ of the Steenrod algebra. Each dot represents a copy of $\F$. Straight lines represent the action of $Sq^1$ and curved ones represent the action of $Sq^2$.} \label{fig:a1}
\end{figure}

Our methods are particularly efficient for the study of the stable module category of $\A(1)$, as there is only one maximal quasi-elementary sub-Hopf algebra of $\A(1)$: the exterior algebra on the first two Milnor operations $Q_0$ and $Q_1$, denoted $\E(1)$.

In particular, we know by Proposition \ref{pro:detection} that an $\A(1)$-module is in $Pic(\A(1))$ if and only if its restriction to $\E(1)$ is. For instance, the joker (see figure \ref{fig:joker}) is a non-trivial element of the Picard group of $\A(1)$, as its restriction to $\E(1)$ is stably isomorphic to $un$.

\begin{figure}
\definecolor{qqqqff}{rgb}{0.3333333333333333,0.3333333333333333,0.3333333333333333}
\begin{tikzpicture}[line cap=round,line join=round,>=triangle 45,x=0.5cm,y=0.5cm]
\clip(5.82942940857475,5.953777759762861) rectangle (10.537605207779377,8.02281283886911);
\draw (6.,7.)-- (7.,7.);
\draw (9.,7.)-- (10.,7.);
\draw [shift={(7.,6.25)}] plot[domain=0.6435011087932844:2.498091544796509,variable=\t]({1.*1.25*cos(\t r)+0.*1.25*sin(\t r)},{0.*1.25*cos(\t r)+1.*1.25*sin(\t r)});
\draw [shift={(9.,6.25)}] plot[domain=0.6435011087932844:2.498091544796509,variable=\t]({1.*1.25*cos(\t r)+0.*1.25*sin(\t r)},{0.*1.25*cos(\t r)+1.*1.25*sin(\t r)});
\draw [shift={(8.,7.75)}] plot[domain=3.7850937623830774:5.639684198386302,variable=\t]({1.*1.25*cos(\t r)+0.*1.25*sin(\t r)},{0.*1.25*cos(\t r)+1.*1.25*sin(\t r)});
\begin{scriptsize}
\draw [fill=qqqqff] (6.,7.) circle (2.5pt);
\draw [fill=qqqqff] (7.,7.) circle (2.5pt);
\draw [fill=qqqqff] (8.,7.) circle (2.5pt);
\draw [fill=qqqqff] (9.,7.) circle (2.5pt);
\draw [fill=qqqqff] (10.,7.) circle (2.5pt);
\end{scriptsize}
\end{tikzpicture}

\caption{The joker. Each dot represents a copy of $\F$. Straight lines represent the action of $Sq^1$ and curved ones represent the action of $Sq^2$.} \label{fig:joker}
\end{figure}
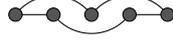

Using an explicit resolution of $\F$ as an $\A(1)$-module (or the Cartan-Eilenberg spectral sequence associated to $\E(1) \subset \A(1)$, or the May spectral sequence for $\A(1)$), it is easy to compute $\ext_{\A(1)}^{s,t}(\F,\F)$. The alert reader will recognise the $E_2 = E_{\infty}$-page of the Adams spectral sequence computing $kO_*$. Explicitly,
\begin{equation} \label{eq:ha1}
\ext_{\A(1)}^{s,t}(\F,\F) \cong  \frac{\F[v_0, \eta, \alpha, \beta]}{(v_0\eta, \eta^3, \eta \alpha, \alpha^2-v_0^2\beta)},
\end{equation}
with $|v_0| = (1,1)$, $|\eta| = (1,2)$, $|\alpha| = (3,7)$, and $|\beta| = (4,12)$.

In particular, Poincar\'e duality gives the homotopy groups of the space $\End_{\A(1)}(\F)$:
\begin{equation}
\pi_{s,t}(\End_{\A(1)}(\F)) \cong \ext_{\A(1)}^{s-1,-6-t}(\F,\F).
\end{equation}

\section{The spectral sequence for the endomorphism space of the unit}

When $A =\A(1)$, the spectral sequence provided by Proposition \ref{pro:bkssendaut} reads
\begin{equation} \label{eq:ssenda1}
E_1^{s,t,n}(\End) \cong N^n\ext_{\A(1)}^{-s,t}(\F,T^{\otimes \bullet+1})^{\ast} \Rightarrow \ext_{\A(1)}^{s-n-1,-6-t}(\F,\F)^{\ast},
\end{equation}
where $T = \left( \A(1)\sur \E(1) \right)^{\ast} = E(\xi_1^2)$, the exterior algebra on $\xi_1^2$ in degree $-2$.

But we are in the easy case when there is only one elementary Hopf subalgebra of $\A(1)$, so remark \ref{rk:simplicifationoneelementary} applies, giving a reformulation of the $E_1$-page of the spectral sequence \eqref{eq:ssenda1}

\begin{equation} \label{eq:refssenda1}
E_1^{s,t,n}(\End) \cong N^n(\ext_{\E(1)}^{-s,t}(\F,T^{\otimes \bullet}))^* \Rightarrow (\ext_{\A(1)}^{s-n-1,-6-t}(\F,\F))^*.
\end{equation}

We can actually compute explicitly the first page of this spectral sequence.

\begin{lemma}
The $E_2$-page of the spectral sequence 
\begin{equation} \label{eq:refssenda1e2}
E_1^{s,t,n}(\End) \cong \Sigma^{-1,-4} \F[v_0^{-1},v_1^{-2} \oplus \F[v_1^{-2},\eta] \Rightarrow (\ext_{\A(1)}^{s-n-1,-6-t}(\F,\F))^*,
\end{equation}
where $|v_0^{-1} | = (1,-1,0)$, $|v_1^{-1}| = (1,-3,0)$ and $|\eta| = (0,-2,1)$.
\end{lemma}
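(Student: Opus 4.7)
Since the spectral sequence is Bousfield--Kan for the cosimplicial abelian group $n \mapsto \ext_{\E(1)}^{-s,t}(\F, T^{\otimes n})^*$, computing the $E_2$-page reduces to computing the cohomology of the associated normalized Moore complex. The first step will be to identify $U_{\E(1)} T$ as an $\E(1)$-module. Since $T = (\A(1)\sur\E(1))^*$, I will analyze the quotient $\A(1)\sur\E(1)$, which is two-dimensional with $\F$-basis $\{[1], [\mathrm{Sq}^2]\}$ concentrated in internal degrees $0$ and $2$. A direct computation with the Adem relations (notably $\mathrm{Sq}^1 \mathrm{Sq}^3 = 0$, together with $\mathrm{Sq}^3 = \mathrm{Sq}^1 \mathrm{Sq}^2 \in A\cdot\E(1)^+$) shows that both $Q_0 = \mathrm{Sq}^1$ and $Q_1$ act trivially on this quotient; dualizing then yields $U_{\E(1)} T \cong \F \oplus \F[-2]$ as a trivial graded $\E(1)$-module.

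Next, the triviality of $U_{\E(1)} T$ lets the K\"unneth formula express each layer as
\[\ext_{\E(1)}^{*,*}(\F, T^{\otimes n}) \cong \F[v_0, v_1] \otimes (\F \oplus \F[-2])^{\otimes n}.\]
Applying Poincar\'e duality for $\E(1)$ (Proposition \ref{pro:poincare} with $|\E(1)| = 4$) and dualizing then rewrites the negative-$s$ part as a shifted expression involving the Poincar\'e duals of $v_0$ and (a power of) $v_1$, which I will name $v_0^{-1}$ and $v_1^{-2}$, together with shifts coming from the copies of $\bar T = \F\{\xi_1^2\}$.

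I will then show that all normalized differentials vanish. The cosimplicial structure on $T^{\otimes\bullet+1}$ has faces $d^i$ given by unit insertion $1:\F\to T$ and degeneracies given by the multiplication of $T$; Moore normalization projects each tensor factor onto $\bar T = \ker(\epsilon_T)$. Since every face inserts a copy of $1 \in \F$, which is killed by the projection onto $\bar T$, the normalized differentials are all zero. Hence the $E_2$-page equals the underlying graded vector space of the normalized Moore complex. Assembling: the $n = 0$ piece, i.e. the Poincar\'e dual of $\ext_{\E(1)}^{*,*}(\F,\F) = \F[v_0, v_1]$, becomes the summand $\Sigma^{-1,-4}\F[v_0^{-1}, v_1^{-2}]$, where the shift $(-1, -4) = (-1, -|\E(1)|)$ is exactly the Poincar\'e duality shift; the contributions at $n \geq 1$, produced by $\bar T^{\otimes n}$, assemble into the summand $\F[v_1^{-2}, \eta]$, where $\eta$ in tri-degree $(0, -2, 1)$ encodes the generator $\xi_1^2 \in \bar T$ appearing first in cosimplicial degree $1$.

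The main obstacle will be the combinatorial assembly in this last step: organizing simultaneously the Poincar\'e duality shift, the internal $-2$ shift per $\bar T$ factor, and the dualization conventions into the clean two-summand form of the claim. Once the vanishing of normalized differentials is secured, the remaining work is essentially bookkeeping, but checking that the cohomology splits precisely as the stated direct sum of polynomial rings (with generators of the specified tri-degrees, and crucially with no surviving relations between the $\Sigma^{-1,-4}\F[v_0^{-1}, v_1^{-2}]$ piece and the $\F[v_1^{-2}, \eta]$ piece) is the most delicate part.
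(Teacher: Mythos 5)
Your first two steps are fine and agree with the paper: $U_{\E(1)}T$ is concentrated in even internal degrees while $Q_0,Q_1$ have odd degree, so $T$ restricts to a trivial $\E(1)$-module and each layer is $\ext_{\E(1)}^{*,*}(\F,\F)\otimes T^{\otimes n}$. The fatal step is the claim that \emph{all normalized differentials vanish} because the cofaces are unit insertions killed by the projection onto $\overline{T}=\ker(\epsilon_T)$. This is not how Moore normalization works: the normalized complex is $N^n=\bigcap_i\ker(s^i)$ (or the quotient by degeneracies), and its differential is still the alternating sum of the cofaces, which is nonzero in general --- normalization removes degenerate \emph{elements}, it does not annihilate the face maps. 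You can see the error directly from the statement you are trying to prove: if $d_1$ were zero, the entire filtration-$0$ column $\ext_{\E(1)}^{*,*}(\F,\F)^{\ast}\cong\F[v_0,v_1]^{\ast}$ would survive to $E_2$, including the duals of the odd powers $v_0^iv_1^{2j+1}$; but these classes are absent from $\Sigma^{-1,-4}\F[v_0^{-1},v_1^{-2}]\oplus\F[v_1^{-2},\eta]$, and likewise the higher columns $\F[v_0,v_1]^{\ast}\otimes\overline{T}^{\otimes n}$ are far larger than $\F[v_1^{-2}]\cdot\eta^n$. So your own bookkeeping in the last paragraph cannot close up.

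What the paper actually does is identify the $E_1$-differential with the reduced bar/cobar differential of the exterior algebra $T=E(\xi_1^2)$ acting on $\ext_{\E(1)}(\un,\un)^{\ast}$ \emph{nontrivially}: the action is the coaction dual to $\Delta(\xi_2)=1\otimes\xi_2+\xi_1^2\otimes\xi_1+\xi_2\otimes 1$, which sends the class dual to $v_0^{i}v_1^{2j+1}$ to the class dual to $v_0^{i+1}v_1^{2j}$. Under this action $\F[v_0,v_1]^{\ast}$ splits as a free $E(\xi_1^2)$-module (pairing each odd-$v_1$-power monomial with an even one carrying an extra $v_0$) plus the trivial summand $\F[v_1^{-2}]$; the cohomology of the resulting complex is then the free part's contribution $\Sigma^{-1,-4}\F[v_0^{-1},v_1^{-2}]$ concentrated in cosimplicial degree $0$, plus $\F[v_1^{-2}]\otimes\ext_{E(\xi_1^2)}(\F,\F)=\F[v_1^{-2},\eta]$. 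To repair your argument you must compute this action (this is exactly where the hypothesis $\E(1)\subset\A(1)$ enters, rather than only the $\E(1)$-module structure of $T$) and take the cohomology of the nontrivial complex; without it the lemma is not reachable.
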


There is a slight abuse of notation in the previous lemma. Indeed, the element $v_0^{-1}$ is not an actual inverse to $v_0$, but is related to the dual of $v_0$ in $\ext_{\A(1)}(\F,\F)$.

\begin{proof}
By construction, the differential on the $E_1$-page is induced by the alternating sum of the maps
$$id \otimes \mu \otimes id : T^{\otimes n} \rightarrow T^{\otimes n-1},$$
where $\mu : T^{\otimes 2} \rightarrow T$ is the multiplication of the commutative ring object $T$.

But $T$ is concentrated in even degrees, so, as an $\E(1)$-module, it is trivial. In particular, 

$$\ext_{\E(1)}^{s-1,-4-t}(\F,T^{\otimes n}) = \ext_{\E(1)}^{s-1,-4-t}(\F,\F) \otimes T^{\otimes n}$$

Now, $T$ being an exterior algebra on $\xi_1^2$ of degree $-2$, the $E_1$-page is the tensor product of a minimal resolution of $\un$ over the exterior algebra $T$ with $\ext_{\E(1)}(\un,\un)$. The action of $T$ on this extension group is coming from the diagonal, indeed, the action of $T$ is given by the coaction of $(\xi^2)^{\ast}$ on $\hom_{\E(1)}(\A(1),k)$. 
Since $\Delta(\xi_2) = 1 \otimes \xi_2 + \xi_1^2 \otimes \xi_1 + \xi_2 \otimes 1$, we have $\xi_1^2 v_0^{-i}v_1^{2j+1} = v_0^{i+1} v_1^{-2j}$. The asserted $E_1$-page follows.
\end{proof}

Finally, the only differentials we are interested in will follow from our knowledge of the abutment.

\section{The stable Picard group of $\A(1)$}

We are now ready to compute the Picard group of the stable category of $\A(1)$-modules. As stated in the introduction, this follows by comparison with the spectral sequence for the space of endomorphisms of the unit.

\begin{thm} \label{thm:pica1}
The stable Picard group of $\A(1)$ is isomorphic to $\Z \oplus \Z \oplus \Z/2$. A representative of the element corresponding to $a,b,c \in \Z \oplus \Z \oplus \Z/2$  is 
$\Omega^a J^{c}[b]$, where $J$ is the joker.
\end{thm}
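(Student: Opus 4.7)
The strategy is to run the Picard descent spectral sequence of Corollary \ref{cor:sspicard} specialized to $A = \A(1)$, and read off its $s - n = 0$ diagonal on the $E_\infty$-page. Since $\A(1)$ has a unique maximal quasi-elementary sub-Hopf algebra $\E(1)$, the spectral sequence takes the simplified form
\begin{equation*}
E_1^{s,n}(\pic) = N^n \pi_s\bigl(\pic(\mod_{\stinf(\E(1))}(T^{\otimes \bullet}))\bigr) \Rightarrow \pi_{s-n}(\pic(\A(1))),
\end{equation*}
with $T = T_{\E(1)} = E(\xi_1^2)$ concentrated in negative even degrees.

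First, I would pin down the $E_1$-page. For $s \geq 2$ the comparison of Proposition \ref{pro:comparison} identifies $E_1^{s,n}(\pic)$ with $E_1^{s-1,0,n}(\End)$, and the latter is the bigraded page already computed in equation \eqref{eq:refssenda1e2}. The column $s = 1$ vanishes because $\gm_{\C}(\un)$ reduces to $\F_2^{\times} = \{1\}$ over the ground field $\F$. Finally, the column $s = 0$ is the Picard group of each cosimplicial layer; the $n = 0$ entry is $\Pic(\stinf(\E(1)))$, which for the primitively generated exterior Hopf algebra $\E(1)$ is free abelian of rank two, generated by the classes of $\Omega\un$ and of $\un[1]$.

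Second, I would extract differentials by comparing with the $\End$-spectral sequence of Proposition \ref{pro:bkssendaut}. The abutment $\pi_{*,*}(\End_{\A(1)}(\un))$ is completely known via Poincar\'e duality (Corollary \ref{cor:pdforend}) from the ring $\ext_{\A(1)}(\F,\F)$ displayed in \eqref{eq:ha1}. Matching this explicit $E_\infty$ against the known $E_1$ forces every differential in $E_r(\End)$, and via Proposition \ref{pro:comparison} these transfer to differentials $d_r$ in $E_r(\pic)$ originating from bidegree $(s,n)$ whenever $r \leq s-1$. This determines everything on the $s = n$ diagonal up to potential late differentials $d_r$ with $r \geq s$ starting at $E_r^{s,s}$; these are the genuinely new input and must be ruled out by hand, using that any such class would have to map to a permanent cycle in $E_\infty(\End)$.

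Third, I would assemble $\Pic(\A(1))$ from the surviving classes on the diagonal. The column $E_\infty^{0,0}$ contributes the image of the restriction $\Pic(\A(1)) \to \Pic(\stinf(\E(1)))$, which will contain the full $\Z \oplus \Z$ generated by $[\Omega\un]$ and $[\un[1]]$ (these lift tautologically from $\A(1)$). The higher filtration $E_\infty^{n,n}$ for $n \geq 1$ contributes the kernel of restriction; the joker $J$ satisfies $U_{\E(1)} J \simeq \un$ and $J \otimes J \simeq \un$ in $\stinf(\A(1))$ by a direct check on the picture in Figure \ref{fig:joker}, so $[J]$ is a nonzero $2$-torsion class in this kernel. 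A count of dimensions on the $E_\infty$-page along $s = n$ will confirm that $\langle [J] \rangle \cong \Z/2$ is all of the kernel, and that the resulting extension $0 \to \Z/2 \to \Pic(\A(1)) \to \Z \oplus \Z \to 0$ splits (since $\Z \oplus \Z$ is free), yielding $\Z \oplus \Z \oplus \Z/2$ with the claimed representatives.

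The main obstacle will be the differential bookkeeping in step two: the comparison of Proposition \ref{pro:comparison} only covers $d_r$ with $r \leq s-1$ on the $s = n$ diagonal, so for each $s$ one must separately verify that the late differentials $d_s, d_{s+1}, \ldots$ emanating from $E_r^{s,s}$ do not kill classes that would otherwise contribute to $\pi_0(\pic(\A(1)))$. Controlling this, together with checking that no nontrivial extensions appear beyond the $\Z/2$ contributed by the joker, is where the real work lies.
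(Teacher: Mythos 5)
Your proposal follows essentially the same route as the paper: run the Picard descent spectral sequence for $\E(1)\subset\A(1)$, identify its $E_1$-page via the comparison with the endomorphism spectral sequence of Proposition \ref{pro:comparison} and the computation \eqref{eq:refssenda1e2}, obtain the upper bound $\Z\oplus\Z\oplus\Z/2$ on the diagonal contributing to $\pi_0(\pic(\A(1)))$ (Lemma \ref{lemma:upperboundonpic}), and use the joker as an explicit witness that the $\Z/2$ class survives. The only cosmetic difference is that you propose to control late differentials by hand, whereas the paper's upper-bound-plus-witness argument makes that step unnecessary.
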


The proof is essentially an inspection of the spectral sequence for the Picard space. We divide this inspection into a series of lemmas.

Recall that the comparison result given in Proposition \ref{pro:comparison}, we have an isomorphism
\begin{equation}
E_1^{s-1,0,n}(\End) \cong E_1^{s,n}(\pic)
\end{equation}
whenever $n \geq 0$ and $s \geq 2$. Thus, our first task is to determine $E_2^{s-1,t,n}(\End)$, for $t = 0$.

\begin{lemma} \label{lemma:upperboundonpic}
At the $E_{\infty}$-page, $E_{\infty}^{s,s-1}$ is a subquotient of $\Z \oplus \Z \oplus \Z/2$.
\end{lemma}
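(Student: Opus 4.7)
The plan is to combine the comparison isomorphism of Proposition~\ref{pro:comparison} with the explicit $E_2$-page of the endomorphism spectral sequence, and then patch in the low-filtration positions by a direct calculation. Proposition~\ref{pro:comparison} identifies $E_r^{s,n}(\pic) \cong E_r^{s-1,0,n}(\End)$ for $s \geq 2$, $n \geq 0$, with compatible $d_r$-differentials in the appropriate range, so in the range $s \geq 2$ the Picard $E_\infty$-page at the relevant internal grading is a subquotient of the corresponding slice of the endomorphism $E_2$-page.

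First I would isolate the $t=0$ slice of the End $E_2$-page. Using the tridegrees $|v_0^{-1}| = (1,-1,0)$, $|v_1^{-2}| = (1,-3,0)$, $|\eta| = (0,-2,1)$, one imposes $t=0$ on the monomials of $\Sigma^{-1,-4}\F[v_0^{-1},v_1^{-2}] \oplus \F[v_1^{-2},\eta]$. In the first summand the condition $i + 3j = -4$ has no solution with $i,j \geq 0$; in the second, $3j + 2k = 0$ forces $(j,k) = (0,0)$. Hence the $t=0$ slice contains essentially a single copy of $\F$, which through the comparison controls the entire Picard contribution in filtration $s \geq 2$.

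Next, the low-filtration positions $s \in \{0,1\}$ must be computed directly, since they are not covered by the comparison. At $s = 0$, the group $E_1^{0,0}(\pic) = \Pic(\stinf(\E(1)))$ equals (by the classical theory of endotrivial modules over the rank-two elementary abelian group algebra, together with the $\Z$-factor from the internal grading shifts) the free abelian group of rank two generated by $[1]$ and $\Omega$. At $s = 1$, $E_1^{1,0}(\pic) = \gm_{\stinf(\E(1))}(\un) = \F_2^\times$ is trivial. Combining these bottom contributions with the single additional $\F$ detected at higher filtration via the End SS --- the survivor that should ultimately account for the joker, which is stably trivial over $\E(1)$ --- assembles into a subquotient bounded by $\Z \oplus \Z \oplus \Z/2$. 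Passing from $E_2$ to $E_\infty$ only further takes subquotients, so the bound persists.

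The principal obstacle is bookkeeping: one must carefully match tridegrees in the End SS with the bigrading of the Pic SS through the comparison, verify that the lone surviving $\F$-summand at higher filtration is indeed a permanent cycle (using the compatibility of differentials in Proposition~\ref{pro:comparison} together with our explicit knowledge of the abutment $\ext_{\A(1)}(\F,\F)$ from \eqref{eq:ha1}), and check that no additional contributions can arise from positions we have not individually inspected. These verifications are all made tractable by the sparsity of the $t=0$ slice computed in the first step.
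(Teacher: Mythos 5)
Your argument follows the paper's proof essentially verbatim: identify $E^{s,n}(\pic)$ with $E^{s-1,0,n}(\End)$ for $s \geq 2$ via Proposition \ref{pro:comparison}, count the classes of the endomorphism $E_2$-page in the relevant tridegree, and supply the excluded filtrations $s=0,1$ directly as $\Pic(\st(\E(1))) \cong \Z\oplus\Z$ and a point. The only divergence is in the degree bookkeeping: the paper imposes both $t=0$ \emph{and} the diagonal condition singling out contributions to $\pi_0(\pic(\A(1)))$, locating the lone $\F$ at $\Sigma^{-1,-4}\eta^2$ in the correct filtration, whereas your $t=0$-only count places the single class at the unit in filtration $0$ --- this does not weaken the upper bound the lemma asserts, but you should note that it is the class $\eta^2$ sitting on the $\pi_0$-diagonal that must survive to account for the joker in Theorem \ref{thm:pica1}.
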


\begin{proof}
We have to determine $E_1^{s,0,s+1}(\End)$.

The degree of a generic element $\Sigma^{-1,-4}v_1^{-2i}\eta^j$ is $(2i,-6i+2j,j)$. The constraint on the tridegree gives $j = i+2$, and thus $-2i + 4 = 4$. There is only one element here, namely $\Sigma^{-1,-4}\eta^2$.

 The comparison result of Proposition \ref{pro:comparison} gives us an isomorphism
$$ E_2^{s,s-1}(\pic) \cong E_2^{s-1,0,s-1}(\End)$$

for $s\geq 2$. Moreover, the values of $s$ that are left out by the previous identification are already known from the $E^1$-page: $E_2^{0,1}(\pic) = \pi_0(\pic(\E(1)))= \Z \oplus \Z$, as this is the classical Picard group of the stable category of $\E(1)$-modules, and  $E_2^{1,0}(\pic) \cong E_2^{0,0,0}(\End)$ is reduced to a point.
\end{proof}

\begin{proof}[proof of Theorem \ref{thm:pica1}]
We know that at least one element has to survive because of the joker, hence we have a complete determination.
\end{proof}

\begin{rk}
The reader might have noticed that the Picard group of $\E(1)$ did not actually play any role in this computation. In particular, we could have focused on the relative Picard group of $\E(1) \subset \A(1)$, that is, the kernel of the group homomorphism $\pi_0(\pic(\A(1))) \rightarrow \pi_0(\pic(\E(1))) \cong \Z \oplus \Z$. This is indeed possible, and the computation is taken care of by the spectral sequence for the spaces of lifts. Indeed, classes of elements $[M]$ in the relative Picard group admits exactly two representatives which are lifts of $\un \in \st(\E(1))$, namely $\un$ and the joker.

We leave the computation of the relative Picard group by the machinery developed in section \ref{sec:descentlift} to the interested reader (the appropriate spectral sequences coincides with the spectral sequence for Picard spaces in a wide range of degree, so the arguments are completely similar).
\end{rk}

\section{A study of some lifts}

We conclude our study of $\A(1)$-modules by solving a couple of lifting problems. Note that it actually is possible to classify the lifts of all stably indecomposable $\E(1)$-modules using this, and that the technology we developed also applies to stably decomposable $\E(1)$-modules as well. However, we leave to the reader the bookkeeping needed to study all the different cases to focus on two examples.

\subsection{An obstruction to existence}

Our first example of application is the computation of the obstruction to lift the $\E(1)$-module $M = k \{ m_0,m_2,m_3,m_4,m_5,m_7 \}$, with action $Q_0 m_2 = m_3$, $Q_0m_4 = m_5$, $Q_1m_0 = m_3$, $Q_1m_2 = m_5$, and $Q_1m_4 = m_7$, where $m_i$ is in degree $i$ (see figure \ref{fig:nonliftablea1}  for a picture of this module).

It turns out that there is a non trivial obstruction surviving the spectral sequence. This is for a good reason: $M$ does not lift to $\A(1)$.

\begin{figure}
\definecolor{qqqqff}{rgb}{0.3333333333333333,0.3333333333333333,0.3333333333333333}
\begin{tikzpicture}[line cap=round,line join=round,>=triangle 45,x=0.5cm,y=0.5cm]
\clip(1.1727272727272733,2.8418181818181805) rectangle (10.063636363636363,7.114545454545453);
\draw (2.,4.)-- (5.,4.);
\draw (5.,4.)-- (4.,5.);
\draw (4.,5.)-- (7.,5.);
\draw (1.9909090909090914,4.223636363636363) node[anchor=north west] {$m_0$};
\draw (4.00909090909091,5.223636363636363) node[anchor=north west] {$m_2$};
\draw (5.009090909090909,4.223636363636363) node[anchor=north west] {$m_3$};
\draw (7.009090909090909,5.223636363636363) node[anchor=north west] {$m_5$};
\draw (7.,5.)-- (6.,6.);
\draw (6.,6.)-- (9.,6.);
\draw (6.009090909090909,6.223636363636363) node[anchor=north west] {$m_4$};
\draw (9.009090909090908,6.223636363636363) node[anchor=north west] {$m_7$};
\begin{scriptsize}
\draw [fill=qqqqff] (2.,4.) circle (2.5pt);
\draw [fill=qqqqff] (5.,4.) circle (2.5pt);
\draw [fill=qqqqff] (4.,5.) circle (2.5pt);
\draw [fill=qqqqff] (7.,5.) circle (2.5pt);
\draw [fill=qqqqff] (6.,6.) circle (2.5pt);
\draw [fill=qqqqff] (9.,6.) circle (2.5pt);
\end{scriptsize}
\end{tikzpicture}

\caption{A graphical representation of $M$} \label{fig:nonliftablea1} 
\end{figure}
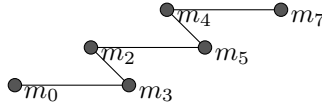

\begin{lemma} \label{lemma:mperiodic}
There is an isomorphism
\begin{equation}
\Omega M \cong \Sigma^{-1} M.
\end{equation}
\end{lemma}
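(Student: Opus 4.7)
The plan is to compute $\Omega M$ explicitly, as the kernel of a minimal projective cover of $M$, and recognise this kernel as an internal degree shift of $M$ itself.

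First I would identify the minimal $\E(1)$-generators of $M$. The images of $Q_0$ and $Q_1$ in $M$ are contained in $\{m_3, m_5, m_7\}$, so the set $\{m_0, m_2, m_4\}$ is a minimal generating set. This provides a projective cover
\[
\pi : P := \E(1)\langle e_0\rangle \oplus \E(1)\langle e_2\rangle \oplus \E(1)\langle e_4\rangle \twoheadrightarrow M,
\]
with $e_i$ placed in degree $i$, and $\Omega M$ identified with $\ker(\pi)$.

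Second I would write down an explicit basis for $\Omega M$. A dimension count gives $\dim_{\F} \Omega M = 12 - 6 = 6$. The three natural kernel elements coming from the defining relations of $M$ are
\begin{align*}
g_0 &:= Q_0 e_0 \quad(\text{from } Q_0 m_0 = 0), \\
g_2 &:= Q_1 e_0 + Q_0 e_2 \quad(\text{from } Q_1 m_0 = m_3 = Q_0 m_2), \\
g_4 &:= Q_1 e_2 + Q_0 e_4 \quad(\text{from } Q_1 m_2 = m_5 = Q_0 m_4),
\end{align*}
living in degrees $1, 3, 5$ respectively. Acting by $Q_0, Q_1, Q_0 Q_1$ produces the three remaining basis elements $Q_0 Q_1 e_0$, $Q_0 Q_1 e_2$, $Q_0 Q_1 e_4$, in degrees $4, 6, 8$, completing a basis.

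Third I would construct the $\E(1)$-linear isomorphism $\Sigma^{-1} M \cong \Omega M$ by sending the generators $m_0, m_2, m_4$ of the right-hand side (suitably reindexed in internal degree) to $g_0, g_2, g_4$, and extending by $\E(1)$-linearity. The key identities to check are
\[
Q_1 g_0 = Q_0 Q_1 e_0 = Q_0 g_2, \qquad Q_1 g_2 = Q_0 Q_1 e_2 = Q_0 g_4, \qquad Q_0 g_0 = 0,
\]
which mirror the defining relations $Q_1 m_0 = Q_0 m_2$, $Q_1 m_2 = Q_0 m_4$, and $Q_0 m_0 = 0$ of $M$. The induced map is then a degree-preserving bijection on the listed bases, hence an isomorphism of $\E(1)$-modules. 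The main obstacle is only the careful bookkeeping around the coincidences $Q_1 g_{i-2} = Q_0 g_i$, which encode the overlap between the three staircase pieces making up $M$; conceptually, the calculation shows that $M$ admits a periodic resolution of period one, up to internal degree shift.
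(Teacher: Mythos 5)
Your strategy---exhibit a one-step periodic resolution of $M$ and read off the internal degree shift---is reasonable, and the explicit computation of $\ker(\pi)$ is correct: it is six-dimensional with basis $g_0,g_2,g_4,Q_0Q_1e_0,Q_0Q_1e_2,Q_0Q_1e_4$ in internal degrees $1,3,4,5,6,8$, and the relations $Q_0g_0=0$, $Q_1g_0=Q_0g_2$, $Q_1g_2=Q_0g_4$, $Q_1g_4\neq 0$ do reproduce the zigzag structure of $M$. The gap is in the last step, which is where the entire content of the lemma lives. The generators $g_0,g_2,g_4$ sit in degrees $1,3,5$, whereas the generators of $\Sigma^{-1}M$ sit in degrees $-1,1,3$; the full basis of $\ker(\pi)$ occupies degrees $1,3,4,5,6,8$ while $\Sigma^{-1}M$ occupies $-1,1,2,3,4,6$. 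There is therefore no degree-preserving bijection between the two, and what your computation actually establishes is $\ker(\pi)\cong\Sigma^{+1}M$. The phrase ``suitably reindexed in internal degree'' conceals a shift by $2$, and since the lemma is precisely the determination of this shift (it feeds the periodicity of $\ext_{\E(1)}^{s,t}(\un,M)$ in the next lemma and ultimately the degree of the obstruction class), this cannot be waved away.

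The paper argues dually: it embeds $M$ into its injective envelope $\Sigma^{-1}\E(1)\oplus\Sigma^{1}\E(1)\oplus\Sigma^{3}\E(1)$ (the socle of $M$ sits in degrees $3,5,7$ and the top class of $\E(1)$ is in degree $4$) and identifies the \emph{cokernel} with $\Sigma^{-1}M$; it is this cosyzygy that the lemma denotes $\Omega M$. Your kernel computation is the equivalent statement for the inverse operation---applying the cosyzygy functor to $\ker(\pi)\cong\Sigma M$ recovers exactly $\Omega M\cong\Sigma^{-1}M$ in the paper's sense---but as written your proof attributes the wrong shift to the functor you actually computed. To repair it, either redo the argument with the injective envelope as the paper does, or keep your projective cover, conclude honestly that its kernel is $\Sigma M$, and then pass to the inverse equivalence to land on the statement of the lemma.
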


\begin{proof}
The injective enveloppe of $M$ is
\begin{equation*}
M \rightarrow \Sigma^{-1} \E(1) \oplus \Sigma^{1} \E(1) \oplus \Sigma^{3} \E(1).
\end{equation*}
The cokernel of this map is exactly $\Sigma^{-1}M$. By definition, this is $\Omega M$.
\end{proof}

The extension groups of $M$ follows.

\begin{lemma}
There is an isomorphism
\begin{equation}
\ext_{\E(1)}^{s,t}(\un,M) \cong \F[v_0^{\pm 1}] \{x_{-3},x_{-5},x_{-7} \} 
\end{equation}
where $|x_i| = i $.
\end{lemma}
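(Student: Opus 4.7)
The plan is to use Lemma \ref{lemma:mperiodic} to reduce the computation of $\ext^{s,t}_{\E(1)}(\un, M)$ to the determination of the stable hom sets $[\un, M[n]]_{\stinf(\E(1))}$, and then to compute the latter directly from the combinatorics of $M$. Iterating the periodicity isomorphism $\Omega M \cong M[-1]$ gives $\Omega^{-s} M \cong M[s]$ for every $s \in \Z$, so by Proposition \ref{pro:piiextgroups} one has
\[
\ext^{s, t}_{\E(1)}(\un, M) \cong [\un, M[t - s]]_{\stinf(\E(1))}.
\]
In particular the bigraded module depends only on the difference $t - s$, which is exactly the $v_0$-periodicity expressed by the action of the class $v_0 \in \ext^{1, 1}_{\E(1)}(\un, \un)$ of bidegree $(1,1)$.

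Next I would compute $[\un, M[n]]_{\stinf(\E(1))}$ by inspection. A degree-zero morphism $\un \to M[n]$ corresponds to an element $m \in M_{-n}$ killed by both $Q_0$ and $Q_1$, taken modulo elements of the form $Q_0 Q_1 z$ for $z \in M_{-n-4}$, since the socle of any free $\E(1)$-summand is generated by $Q_0 Q_1$ in internal degree $4$. Reading off Figure \ref{fig:nonliftablea1}, the augmentation annihilator of $M$ is exactly $\F\{m_3, m_5, m_7\}$, and a short case check shows $Q_0 Q_1$ acts trivially on $M$; hence the three classes $x_{-3}, x_{-5}, x_{-7}$ represented by $m_3, m_5, m_7$ in $[\un, M[-3]]$, $[\un, M[-5]]$ and $[\un, M[-7]]$ survive in the stable category, while all other stable hom groups vanish because $M_d$ contains no element in the augmentation socle for $d \notin \{3,5,7\}$.

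Combining the two steps, each $x_i$ generates a copy of $\F[v_0^{\pm 1}]$ inside $\ext^{\bullet, \bullet}_{\E(1)}(\un, M)$ and there are no further classes, yielding the asserted isomorphism $\ext^{s,t}_{\E(1)}(\un, M) \cong \F[v_0^{\pm 1}]\{x_{-3}, x_{-5}, x_{-7}\}$ with $|x_i| = i$ the internal grading. The main delicate point in this plan is the bookkeeping of internal shifts against the bigrading conventions on $\ext$ and the identification of the shift isomorphism with honest multiplication by $v_0$; the module-theoretic input is an entirely elementary inspection of $M$.
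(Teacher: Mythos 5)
Your argument is correct and is essentially the paper's own proof: the paper likewise computes the homological-degree-zero stable maps $\un \to M$ by hand (finding the three socle classes in internal degrees $3$, $5$, $7$, with $Q_0Q_1$ acting trivially so that nothing is killed stably) and then propagates over all $s$ via the periodicity $\Omega M \cong \Sigma^{-1}M$ of Lemma \ref{lemma:mperiodic}. You have simply spelled out the inspection and the shift bookkeeping in more detail than the paper does.
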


\begin{proof}
One has to compute the vector space of maps $\un \rightarrow M$, which is precisely $\F \{x_{-3},x_{-5},x_{-7} \}$, and conclude by the periodicity provided by lemma \ref{lemma:mperiodic}.
\end{proof}

Now, observe that this is enough to determine the spectral sequence for the space of endomorphisms of $M$. Indeed,
\begin{equation*}
\ext_{\E(1)}^{s,t}(M,M) \cong \ext_{\E(1)}^{s,t}(\un, M^{\ast} \otimes M) \cong \ext_{\E(1)}^{s,t}(\un, M \oplus \Sigma^{-7}M)
\end{equation*}
where $M^{\ast}$ is the linear dual of $M$. The stable isomorphism $M^{\ast} \otimes M \cong M \oplus \Sigma^{-7} M$ is a straightforward computation.

\begin{pro} \label{pro:e2pagenonliftable}
The spectral sequence for the space of endomorphisms of $M$ has $E_2$-page:
\begin{equation}
E_2^{s,t}(\End) = \F[v_0^{-1}, \eta] \{ x_{-7},x_{0} \}.
\end{equation}
In particular, \begin{itemize}
\item a generators for $E^{s-1,0,s}(\End)$ is
$\eta^6v_0^{-5}x_{-7}$,
\item a generators for $E^{s-2,0,s}(\End)$ is 
$\eta^5v_0^{-3}x_{-7}$,
\item $E^{s-3,0,s}(\End)$ is of dimension $1$, generated by $\eta^{4}v_0^{-1}x_{-7}$,
\item and $E^{s-4,0,s}(\End)$ is trivial.
\end{itemize}
\end{pro}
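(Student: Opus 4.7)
The plan is to apply the spectral sequence of Proposition \ref{pro:bkssendautm} to $M$, viewed via Proposition \ref{pro:comparisonliftendm} as the $\gm$-spectral sequence attached to the lifting problem. Its $E_1$-page is
\begin{equation*}
E_1^{s,t,n}(\End) \cong \ext_{\E(1)}^{-s,t}(M, T^{\otimes n} \otimes M),
\end{equation*}
with $T = (\A(1)\sur\E(1))^{\ast} \cong E(\xi_1^2)$, and the task is to identify its $E_2$-page.

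First, I would observe that $T$ is $\E(1)$-trivial, since it is concentrated in even internal degrees while $Q_0$ and $Q_1$ act in odd internal degrees. Hence so is $T^{\otimes n}$, and $T^{\otimes n} \otimes M$ is a direct sum of degree-shifted copies of $M$ as $\E(1)$-modules. This gives the identification $E_1^{s,t,n} \cong \ext_{\E(1)}^{-s,\ast}(M, M) \otimes T^{\otimes n}$ as graded vector spaces. Next, I would compute $\ext_{\E(1)}(M, M)$ by exploiting the stable self-duality $M^{\ast} \cong \Sigma^{-7}M$, which comes from the symmetry of $M$ about its middle via the involution $m_i \leftrightarrow m_{7-i}^{\ast}$ (a direct check that $Q_0, Q_1$ are preserved). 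This yields the stable splitting $M^{\ast} \otimes M \simeq M \oplus \Sigma^{-7}M$ alluded to in the text, and combined with the preceding lemma,
\begin{equation*}
\ext_{\E(1)}(M, M) \cong \F[v_0^{\pm 1}]\{x_{-3}, x_{-5}, x_{-7}\} \oplus \F[v_0^{\pm 1}]\{x_4, x_2, x_0\},
\end{equation*}
the second summand coming from a shift of $+7$ in internal degree applied to the first.

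The main step is to identify the $E_1$-differential and compute its cohomology, in direct parallel with the $\un$-case of the previous section. This differential is the cobar differential associated to the cosimplicial algebra $T^{\otimes \bullet+1}$; unwinding $\Delta(\xi_2) = 1 \otimes \xi_2 + \xi_1^2 \otimes \xi_1 + \xi_2 \otimes 1$ shows that the relevant coaction of $\xi_1^2$ shifts internal degree by $2$. Consequently, it pairs consecutive generators along the two chains $x_{-7} \to x_{-5} \to x_{-3}$ and $x_0 \to x_2 \to x_4$, leaving only the extreme elements $x_{-7}$ and $x_0$ as classes on the $E_2$-page. The negative $v_0$-tower survives, and the cosimplicial direction contributes the polynomial generator $\eta$ of tridegree $(0,-2,1)$, yielding $E_2 \cong \F[v_0^{-1}, \eta]\{x_{-7}, x_0\}$. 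The four subclaims about $E^{s-i, 0, s}(\End)$ for $i = 1, 2, 3, 4$ then drop out from a tridegree count: solving for the exponents $(a, b)$ in $v_0^{-a}\eta^b x$ so that the tridegree equals $(s-i, 0, s)$ forces a unique nonzero monomial on $x_{-7}$ for $i = 1, 2, 3$ (the indicated classes) and no solution for $i = 4$.

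The main obstacle will be making the $T$-coaction on $\ext_{\E(1)}(M, M)$ precise, since $M$ itself is not an $\A(1)$-module and this coaction is not visible as a direct $\A(1)$-action on $M$. However, the relevant structure arises naturally from the cosimplicial algebra $T^{\otimes \bullet+1}$ in $\stinf(\A(1))$, and once this is unpacked, the computation parallels that of the $\un$-case verbatim.
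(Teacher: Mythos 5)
Your overall strategy coincides with the paper's: identify the $E_1$-page using that $T$ is $\E(1)$-trivial for degree reasons, reduce $\ext_{\E(1)}(M,M)$ to $\ext_{\E(1)}(\un, M\oplus\Sigma^{-7}M)$ via the self-duality $M^{\ast}\cong\Sigma^{-7}M$ (your explicit check of this duality via $m_i\leftrightarrow m_{7-i}^{\ast}$ is a detail the paper leaves as ``a straightforward computation''), compute the cohomology of the resulting cobar complex over $T=E(\xi_1^2)$, and finish with a tridegree count identical to the one using $|v_0^i\eta^jx_k|=(-i,i+2j+k,j)$.

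The one genuine gap is in your treatment of the $d_1$-differential. You assert that because the coaction of $\xi_1^2$ shifts internal degree by $2$, it ``consequently'' pairs consecutive generators along the chains $x_{-7}\to x_{-5}\to x_{-3}$ and $x_0\to x_2\to x_4$. Degree considerations only show that such a pairing is \emph{possible}; they do not rule out the coaction being trivial on $x_{-5}$ (or acting by an unexpected $v_0$-power), which would leave a strictly larger $E_2$-page, e.g.\ a surviving $x_{-5}$-tower, and would change every one of the four bulleted identifications. You correctly flag the determination of the coaction as ``the main obstacle,'' but ``unpacking the cosimplicial algebra'' is not how the paper resolves it. The paper uses the module structure of the spectral sequence for $\End(M)$ over the one for $\End(\un)$ coming from the pairing of Proposition \ref{pro:pairinginss} (phrased in the proof as ``the action of the Picard space on the space of lifts''): the relation $v_1^{-1}x_{-3}=v_0^{-1}x_{-5}$ in $\ext_{\E(1)}(\un,M)$, combined with the already-computed coaction $\xi_1^2 v_0^{-i}v_1^{2j+1}=v_0^{i+1}v_1^{-2j}$ from the unit case, forces $\xi_1^2 x_{-5}=x_{-3}$, while $\xi_1^2x_{-3}=0$ for degree reasons. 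Without this multiplicative input the claimed $E_2$-page is not justified; with it, the rest of your argument goes through as written.
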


\begin{proof}
The computation of the $E_1$-page is given by the identification of the $E_1$-page provided by \eqref{eq:e1pageendm}, together with the observation made in the proof of the identification \eqref{eq:refssenda1e2}: for degree reasons $T$ has a trivial action of $\E(1)$, so the first differential is induced by the multiplication of $T$ on 
\begin{equation*}
 \F[v_0^{-1}] \{ x_{-7},x_{-5},x_{-3},x_{0},x_{2},x_{4} \} \otimes T^{\otimes n}.
\end{equation*}

Now, the action of $T$ can be determined in the following way: the element $x_{-3}$ has a trivial action of $\xi_1^2$ for degree reasons. Because of the action of the Picard space on the space of lifts, the formula $v_1^{-1}x_{-3} = v_0^{-1} x_{-5}$ gives the action $\xi_1^2 v_0^{-1}x_{-5} = v_0^{-1}x_{-3}$, and finally $\xi_1^2 x_{-5} = x_{-3}$. This gives the asserted $E_1$-page.

Now, the non-trivial element in degrees $(s-1,0,s),(s-2,0,s)$ and $(s-3,0,s)$ follows from inspection, as a generic element of the $E_2$-page has tridegree
\begin{equation*}
|v_0^i\eta^jx_k| = (-i,i + 2j +k,j).
\end{equation*}
\end{proof}

\begin{cor}
There is a non-trivial obstruction to the realization of $M$ as an $\A(1)$-module, namely $\eta^3v_0^{-1}x_{-5}$.
\end{cor}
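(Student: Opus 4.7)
My plan is to apply the obstruction theory of Corollary \ref{thm:liftss} to the $\E(1)$-module $M$, using the explicit description of the $E_2$-page of the $\End(M)$ spectral sequence provided by Proposition \ref{pro:e2pagenonliftable} and the comparison between the $\End$, $\gm$, and $\L$ spectral sequences from Proposition \ref{pro:comparisonliftendm}.

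First I would pin down the tridegree of the candidate obstruction. Using the formula $|v_0^i \eta^j x_k| = (-i, i+2j+k, j)$ recorded in the proof of Proposition \ref{pro:e2pagenonliftable}, the class $\eta^3 v_0^{-1} x_{-5}$ lives in tridegree $(1,0,3)$. Through the comparison of Proposition \ref{pro:comparisonliftendm}, this position corresponds to a class in the obstruction locus $E_r^{s,0,s+1}$ of the spectral sequence for the space of lifts, so that non-triviality there obstructs a lift of the chosen base point.

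Next I would verify that $\eta^3 v_0^{-1} x_{-5}$ is non-zero on $E_2$ and survives to $E_\infty$. Although $x_{-5}$ itself is not among the $E_2$-generators $\{x_{-7}, x_0\}$, the identification $v_0^{-1} x_{-5} = v_1^{-1} x_{-3}$ coming from the Picard action (noted in the proof of Proposition \ref{pro:e2pagenonliftable}) shows that $v_0^{-1} x_{-5}$ represents a nonzero class on $E_2$, and multiplication by $\eta^3$ preserves non-triviality by the polynomial structure $\F[v_0^{-1},\eta]$. To rule out subsequent differentials, I would use the exhaustive list of generators of $E^{s-1,0,s}(\End)$, $E^{s-2,0,s}(\End)$, $E^{s-3,0,s}(\End)$ together with the vanishing of $E^{s-4,0,s}(\End)$, all from Proposition \ref{pro:e2pagenonliftable}, to verify by a finite case analysis on the bidegree $(r, r-1)$ of $d_r$ that no differential can touch our class on any page.

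The main obstacle will be the careful cross-indexing between the $\End(M)$, $\gm(M)$, and $\L(M_{\E})$ spectral sequences, and checking that the Picard-action identification $v_0^{-1} x_{-5} = v_1^{-1} x_{-3}$ really does lift this $E_1$-class to a permanent cycle; once these verifications are in place, invocation of Corollary \ref{thm:liftss} immediately yields that $M$ is not liftable to $\A(1)$.
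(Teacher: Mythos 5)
Your overall strategy---place the candidate class in the obstruction locus of Corollary \ref{thm:liftss} via the comparison of Proposition \ref{pro:comparisonliftendm}, then check it is a permanent cycle against the generators listed in Proposition \ref{pro:e2pagenonliftable}---is the same as the paper's. The gap is in the middle step, where you assert that $\eta^3 v_0^{-1}x_{-5}$ is a nonzero class on the $E_2$-page. The $E_2$-page computed in Proposition \ref{pro:e2pagenonliftable} is $\F[v_0^{-1},\eta]\{x_{-7},x_0\}$; the classes $x_{-5}$ and $x_{-3}$ do not appear in it. The identification $v_0^{-1}x_{-5}=v_1^{-1}x_{-3}$ that you invoke does not rescue this: it is precisely the input used in the proof of Proposition \ref{pro:e2pagenonliftable} to establish the non-trivial action $\xi_1^2 x_{-5}=x_{-3}$, i.e.\ the $d_1$-differential that cancels $x_{-5}$ against $x_{-3}$ and removes both from the $E_2$-page. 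In the tridegree $(1,0,3)$ you correctly compute for $\eta^3v_0^{-1}x_{-5}$, the $E_2$-term is therefore zero (the only monomial of $\F[v_0^{-1},\eta]\{x_{-7},x_0\}$ in a tridegree $(s-2,0,s)$ is $\eta^5v_0^{-3}x_{-7}$, at $s=5$), so your subsequent case analysis of higher differentials has nothing to act on, and no non-vanishing obstruction is produced.

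The paper's proof instead works with the generator that Proposition \ref{pro:e2pagenonliftable} actually lists for the obstruction locus $E^{s-2,0,s}(\End)$, namely $\eta^5v_0^{-3}x_{-7}$ in tridegree $(3,0,5)$, and uses the degree formula $|v_0^i\eta^jx_k|=(-i,i+2j+k,j)$ to check that this class can neither support nor receive any differential $d_r$ with $r\geq 2$, hence survives to $E_\infty$ and obstructs the lift of $M$. (The element named in the statement of the corollary should be read against the paper's proof, which concludes that $\eta^5v_0^{-3}x_{-7}$ is the surviving obstruction; $\eta^3v_0^{-1}x_{-5}$ is at best an $E_1$-level name.) To repair your argument, replace the class you track by $\eta^5v_0^{-3}x_{-7}$ and redo the differential check in tridegree $(3,0,5)$; the comparison with the lift spectral sequence then goes through exactly as you outlined.
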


\begin{proof}
Proposition \ref{pro:comparisonliftendm} provides an identification of the spectral sequence for the space of lifts for $M$ with the spectral sequence  for the endomorphism space, namely
\begin{equation}
E_r^{s-1,0,n}(\End) \cong E_r^{s,0,n}(\L)
\end{equation}
Thus, the obstruction to existence of a lift, living in degree $(s-1,s)$ in $E_1^{s,0,n}(\L)$ can be seen in $E_1^{s-2,0,n}(\End)$.

The asserted element is in degree $n = 3$, so it cannot support any differential $d_r$ for $r \geq 2$, for degree reasons,  as its potential target, namely $\eta^4v_0^-1x_{-7}$ could only have been hit by a $d_1$.
Similarly, it cannot be hit by any differential since the element that is a potential source of a differential hitting this elements, namely  $\eta^6v_0^{-5}x_{-7}$, have spectral sequence degree $6$. Thus $\eta^5v_0^{-3}x_{-7}$ survives to $E_{\infty}$.
\end{proof}

\subsection{Multiple lifts for an $\E(1)$-module}

After studying an $\E(1)$-module which is not liftable to $\A(1)$, we turn to the other extreme. In this section, we will show using the spectral sequence for the space of lifts that a certain $\E(1)$-module admits at most $8$ different lifts as an $\A(1)$-module. 

In this case, the estimate given by the spectral sequence for the space of lifts is sharp, since we can actually find $8$ non-isomorphic $\A(1)$-modules that solve the lifting problem.

Let $N$ be the $\E(1)$-module $\F \{ n_{-1},n_{0},n_{1},n_{2} \}$, where $n_k$ is in degree $k$, and the actions are $Q_0n_{-1} = n_0$, $Q_0 n_1 = n_2$, and $Q_1 n_{-1} = n_2$ (see figure \ref{fig:liftablea1} for a graphical representation of this module).

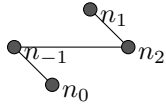
\begin{figure}
\definecolor{qqqqff}{rgb}{0.3333333333333333,0.3333333333333333,0.3333333333333333}
\begin{tikzpicture}[line cap=round,line join=round,>=triangle 45,x=0.5cm,y=0.5cm]
\clip(3.5181818181818185,10.150909090909092) rectangle (10.063636363636363,13.96909090909091);
\draw (2.,4.)-- (5.,4.);
\draw (5.,4.)-- (4.,5.);
\draw (4.,5.)-- (7.,5.);
\draw (1.9909090909090914,4.223636363636362) node[anchor=north west] {$m_0$};
\draw (4.00909090909091,5.223636363636363) node[anchor=north west] {$m_2$};
\draw (5.009090909090909,4.223636363636362) node[anchor=north west] {$m_3$};
\draw (7.009090909090909,5.223636363636363) node[anchor=north west] {$m_5$};
\draw (7.,5.)-- (6.,6.);
\draw (6.,6.)-- (9.,6.);
\draw (6.009090909090909,6.223636363636363) node[anchor=north west] {$m_4$};
\draw (9.009090909090908,6.223636363636363) node[anchor=north west] {$m_7$};
\draw (5.,12.)-- (6.,11.);
\draw (5.,12.)-- (8.,12.);
\draw (7.,13.)-- (8.,12.);
\draw (5.009090909090909,12.223636363636365) node[anchor=north west] {$n_{-1}$};
\draw (6.009090909090909,11.223636363636365) node[anchor=north west] {$n_{0}$};
\draw (7.009090909090909,13.223636363636365) node[anchor=north west] {$n_1$};
\draw (8.009090909090908,12.223636363636365) node[anchor=north west] {$n_2$};
\begin{scriptsize}
\draw [fill=qqqqff] (2.,4.) circle (2.5pt);
\draw [fill=qqqqff] (5.,4.) circle (2.5pt);
\draw [fill=qqqqff] (4.,5.) circle (2.5pt);
\draw [fill=qqqqff] (7.,5.) circle (2.5pt);
\draw [fill=qqqqff] (6.,6.) circle (2.5pt);
\draw [fill=qqqqff] (9.,6.) circle (2.5pt);
\draw [fill=qqqqff] (5.,12.) circle (2.5pt);
\draw [fill=qqqqff] (6.,11.) circle (2.5pt);
\draw [fill=qqqqff] (8.,12.) circle (2.5pt);
\draw [fill=qqqqff] (7.,13.) circle (2.5pt);
\end{scriptsize}
\end{tikzpicture}
\caption{The $\E(1)$-module $N$} \label{fig:liftablea1}
\end{figure}

\begin{lemma}
There is an isomorphism
\begin{equation}
\Omega N \cong \Sigma^{-3} N.
\end{equation}
\end{lemma}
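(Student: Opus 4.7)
The plan is to follow the template of Lemma \ref{lemma:mperiodic}: identify the injective envelope of $N$ as an $\E(1)$-module, then compute the cokernel of the inclusion and recognize it as $\Sigma^{-3} N$. The key observation is that $\E(1)$ is a Frobenius algebra, so its injectives coincide with the free modules, and the injective envelope is dictated by the socle.

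First, I would compute $\mathrm{Soc}(N)$, the subspace of elements annihilated by both $Q_0$ and $Q_1$. Direct inspection of the given action yields $\mathrm{Soc}(N) = \F\{n_0, n_2\}$, concentrated in degrees $0$ and $2$. Since the top class of $\Sigma^{k}\E(1)$ sits in degree $k + 4$, the injective envelope of $N$ must be
\[\Sigma^{-4}\E(1) \oplus \Sigma^{-2}\E(1),\]
with generators $g_0$ in degree $-4$ and $g_2$ in degree $-2$, whose top classes $Q_0 Q_1 g_0$ and $Q_0 Q_1 g_2$ land in degrees $0$ and $2$ respectively.

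Next, I would write down the embedding $\iota: N \hookrightarrow \Sigma^{-4}\E(1) \oplus \Sigma^{-2}\E(1)$ explicitly by requiring that it be an isomorphism on socles: $n_0 \mapsto Q_0 Q_1 g_0$ and $n_2 \mapsto Q_0 Q_1 g_2$. The images of the generators $n_{-1}$ and $n_1$ are then forced by compatibility with the $Q_0$- and $Q_1$-actions, namely $n_{-1} \mapsto Q_1 g_0 + Q_0 g_2$ and $n_1 \mapsto Q_1 g_2$. A straightforward verification shows that $\iota$ is an injective $\E(1)$-module homomorphism.

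Finally, I would read off the cokernel. It admits a basis $\{g_0,\, Q_0 g_0,\, g_2,\, Q_0 g_2\}$ in degrees $-4, -3, -2, -1$ respectively, subject to the identification $Q_1 g_0 \equiv Q_0 g_2$ imposed by the relation on $n_{-1}$. Reading off the induced $Q_0$- and $Q_1$-actions on these four generators recovers precisely the $\E(1)$-module structure of $N$ shifted by $-3$, giving an isomorphism $\mathrm{coker}(\iota) \cong \Sigma^{-3} N$. Following the same convention as in the proof of Lemma \ref{lemma:mperiodic}, this cokernel is $\Omega N$, which yields the asserted isomorphism. The only potential obstacle is the combinatorial bookkeeping of degrees and actions in the cokernel, but no conceptual issue beyond the pattern of Lemma \ref{lemma:mperiodic} arises.
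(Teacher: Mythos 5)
Your proof follows the same route as the paper's: identify the injective envelope of $N$ over $\E(1)$ and read off the cokernel of the inclusion, so it is essentially the paper's argument carried out in more detail. One point worth flagging: your socle computation ($\mathrm{Soc}(N)=\F\{n_0,n_2\}$ in degrees $0$ and $2$) correctly forces the envelope to be $\Sigma^{-4}\E(1)\oplus\Sigma^{-2}\E(1)$, whose cokernel is indeed $\Sigma^{-3}N$; the paper instead writes $\Sigma^{-3}\E(1)\oplus\Sigma^{-1}\E(1)$, whose socle sits in degrees $1$ and $3$ and so cannot receive $\mathrm{Soc}(N)$ --- an off-by-one slip on the paper's side, not yours.
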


\begin{proof}
The injective enveloppe of $N$ is
\begin{equation*}
N \rightarrow \Sigma^{-3} \E(1) \oplus \Sigma^{-1} \E(1).
\end{equation*}
The cokernel of this map is exactly $\Sigma^{-3}N$. By definition, this is $\Omega N$.
\end{proof}

Again, this periodicity has a direct consequence on extension groups.

\begin{lemma}
There is an isomorphism
\begin{equation}
\ext_{\E(1)}^{s,t}(\un,N) \cong \F[v_1^{\pm 1}] \{ x_{-2},x_0 \},
\end{equation}
where $|x_i| = i$.
\end{lemma}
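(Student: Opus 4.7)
The plan is to compute the degree-zero extension group $\hom_{\E(1)}(\un, N)$ by direct inspection, then propagate the answer to all cohomological degrees via the periodicity $\Omega N \cong \Sigma^{-3} N$ proved in the previous lemma.

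First, I compute $\ext^{0,\ast}_{\E(1)}(\un, N)$, which is the graded space of $\E(1)$-linear maps $\un \to N$, equivalently the subspace of $N$ annihilated by both $Q_0$ and $Q_1$. A direct inspection using the defining relations of $N$ shows that $n_{-1}$ and $n_1$ are excluded, since $Q_0 n_{-1} = n_0 \neq 0$ and $Q_0 n_1 = n_2 \neq 0$. The remaining generators $n_0$ and $n_2$ both lie in the annihilator: for $n_0$ one has $Q_0 n_0 = Q_0^2 n_{-1} = 0$ and $Q_1 n_0 = Q_0 Q_1 n_{-1} = Q_0 n_2 = 0$ (the last equality because $Q_0 n_2$ would sit in degree $3$, where $N$ vanishes); and $n_2$ is the top class of $N$. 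Hence $\hom_{\E(1)}(\un, N)$ has $\F$-basis $\{n_0, n_2\}$ in internal degrees $0$ and $2$, giving two generators $x_0$ and $x_{-2}$ in $\ext^{0,\ast}_{\E(1)}(\un, N)$, with the sign on the subscript dictated by the convention used earlier in the $M$-computation.

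Next, invoke the previous lemma. Since $\Omega$ is a cohomological shift in the stable $\infty$-category $\stinf(\E(1))$, the isomorphism $\Omega N \cong \Sigma^{-3} N$ yields a natural periodicity isomorphism
$$\ext^{s+1, t}_{\E(1)}(\un, N) \cong \ext^{s, t-3}_{\E(1)}(\un, N),$$
implemented by multiplication by the generator $v_1 \in \ext^{1, 3}_{\E(1)}(\un, \un)$. Because $\Omega$ is an autoequivalence of the stable category, this multiplication is invertible. Iterating in both directions from the degree-zero computation produces
$$\ext^{\ast, \ast}_{\E(1)}(\un, N) \cong \F[v_1^{\pm 1}]\{x_0, x_{-2}\},$$
as claimed.

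The main obstacle in executing this plan cleanly is bookkeeping the sign convention relating the internal degree of the abstract generator $x_i$ to the internal degree of its representing element in $N$; once that is pinned down, the identification of generators with $x_0$ and $x_{-2}$ is forced. A completely explicit alternative would be to write down a minimal injective $\E(1)$-resolution of $N$, which by the previous lemma becomes periodic of period $3$ in internal degree after the first step, and compute $\ext$ directly from there; but this would be strictly more laborious than exploiting the periodicity.
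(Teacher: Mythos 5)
Your proposal is correct and follows the paper's own argument exactly: the paper's proof is precisely ``compute $\ext^{0,t}_{\E(1)}(\un,N)\cong\F\{x_{-2},x_0\}$ and conclude by the periodicity $\Omega N\cong\Sigma^{-3}N$ of the preceding lemma.'' Your explicit identification of the socle elements $n_0,n_2$ (and the implicit check that nothing factors through a free module, which holds since $Q_0Q_1N=0$) just fills in the detail the paper leaves to the reader.
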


\begin{proof}
One computes $\ext_{\E(1)}^{0,t}(\un,N) \cong \F \{x_{-2},x_0 \}$ and concludes by periodicity.
\end{proof}

The previous computation is enough to compute the spectral sequence for the space of endomorphisms. Indeed
\begin{equation*}
\ext_{\E(1)}^{s,t}(N,N) \cong \ext_{\E(1)}^{s,t}(\un, N^{\ast} \otimes N) \cong \ext_{\E(1)}^{s,t}(\un, N \oplus \Sigma^{-1}N),
\end{equation*}
where $N^{\ast}$ is the linear dual of $N$.

\begin{pro}
The spectral sequence for the space of endomorphisms of $N$ has $E_2$-page:
\begin{equation}
E_2^{s,t}(\End)\cong \F[v_1^{-1},\eta] \{ x_{-2},x_{-1},x_{0},x_1 \}.
\end{equation}
In particular, the generators in degree $(s-1,0,s)$ are $\eta^2 v_1^{-1}x_{-1}$, $\eta^3 v_1^{-2}x_{0}$, and $\eta^4 v_1^{-3}x_{1}$.
\end{pro}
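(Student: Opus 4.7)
My plan is to follow the three-step blueprint of Proposition \ref{pro:e2pagenonliftable}, substituting $N$ for $M$ throughout. First I would compute $\ext_{\E(1)}^{*,*}(N, N)$ by using the tensor-hom adjunction $\ext_{\E(1)}^{*,*}(N, N) \cong \ext_{\E(1)}^{*,*}(\un, N^* \otimes N)$ and verifying by direct inspection a stable splitting $N^* \otimes N \simeq N \oplus \Sigma^{-1} N$ analogous to the one used for $M$. Combined with the previous computation $\ext_{\E(1)}^{*,*}(\un, N) \cong \F[v_1^{\pm 1}]\{x_{-2}, x_0\}$, this immediately produces the four generators $x_{-2}, x_{-1}, x_0, x_1$, with $x_{-1}$ and $x_1$ coming from the $\Sigma^{-1} N$ summand.

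Next, since $\E(1)$ is the unique maximal quasi-elementary Hopf subalgebra of $\A(1)$, Remark \ref{rk:ceendm} identifies the $E_1$-page with the normalized cobar complex $N^n \ext_{\E(1)}^{-s, t}(N, T^{\otimes \bullet+1} \otimes N)$. Since $T = E(\xi_1^2)$ is concentrated in even degrees, its restriction to $\E(1)$ is trivial, so the $T^{\otimes n}$-factor pulls out, and the $d_1$-differential is determined purely by the coaction of $T$ on $\ext_{\E(1)}^{*,*}(N, N)$. I would pin down this coaction by the same mechanism as in Proposition \ref{pro:e2pagenonliftable}, combining the Picard-space pairing of Proposition \ref{pro:pairinginss} with the $v_1$-periodicity of $N$; passing to cohomology then yields the asserted $E_2$-page $\F[v_1^{-1}, \eta]\{x_{-2}, x_{-1}, x_0, x_1\}$, with $\eta$ dual to $\xi_1^2$.

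To locate the three explicit generators in tridegree $(s-1, 0, s)$, I would solve the degree equation for a generic monomial $v_1^a \eta^b x_k$, which has tridegree $(-a, 3a + 2b + k, b)$ by the obvious analogue of the formula in Proposition \ref{pro:e2pagenonliftable}. Imposing $(-a, 3a + 2b + k, b) = (s-1, 0, s)$ forces $a = 1-s$, $b = s$, and $k = s - 3$; the constraint $k \in \{-2, -1, 0, 1\}$ then isolates $s \in \{2, 3, 4\}$, producing exactly $v_1^{-1}\eta^2 x_{-1}$, $v_1^{-2}\eta^3 x_0$, and $v_1^{-3}\eta^4 x_1$.

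The main obstacle I anticipate is the identification of the $\xi_1^2$-coaction in the second step. In Proposition \ref{pro:e2pagenonliftable} this was handled by an ad hoc computation exploiting the $v_0$-periodicity of $M$; for $N$ the roles of $v_0$ and $v_1$ are swapped, so some care is needed to verify that the resulting cohomology really is polynomial in $\eta$ and $v_1^{-1}$ tensored with the four classes $x_k$, rather than involving surviving positive powers of $v_1$ or mixed $v_0$-$v_1$-contributions.
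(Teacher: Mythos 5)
Your proposal follows the paper's proof of this proposition essentially step for step: the same reduction $\ext_{\E(1)}^{s,t}(N,N) \cong \ext_{\E(1)}^{s,t}(\un, N^{\ast}\otimes N) \cong \ext_{\E(1)}^{s,t}(\un, N \oplus \Sigma^{-1}N)$ feeding off the $v_1$-periodicity $\Omega N \cong \Sigma^{-3}N$, the same appeal to \eqref{eq:e1pageendm} together with the observation that $T$ is $\E(1)$-trivial for degree reasons, and the same degree formula $|v_1^i\eta^jx_k| = (-i,3i+2j+k,j)$ for the final inspection. The paper is no more detailed than you are about the identification of the $\xi_1^2$-coaction, so your second step is at the level of rigour the paper itself adopts.

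There is, however, one concrete slip in your last step. Solving $(-a,3a+2b+k,b)=(s-1,0,s)$ gives $a=1-s$, $b=s$, $k=s-3$, and the constraint $k\in\{-2,-1,0,1\}$ yields $s\in\{1,2,3,4\}$, not $s\in\{2,3,4\}$: the omitted solution is $\eta x_{-2}$, which genuinely lives in the $E_2$-page in tridegree $(0,0,1)$. So the degree equation does not by itself ``isolate'' the three listed monomials. The reason this fourth class is legitimately discarded (and the reason the lift count in the subsequent corollary is $8$ and not $16$) is that the chain of identifications $E^{s,n}(\L) \cong E^{s-1,n}(\gm) \cong E^{s-1,0,n}(\End)$ from Propositions \ref{pro:comparisonliftendm} and \ref{pro:bkssendautm} requires $s-1\geq 1$; the filtration-one contribution to $\pi_0$ of the space of lifts is governed by $E^{0,1}(\gm)$, i.e.\ by $\pi_0$ of automorphism spaces, which is not identified with $E^{0,0,1}(\End)$. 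You should either restrict the inspection to $s\geq 2$ with this justification, or address the $(0,0,1)$ class separately; as written, your argument asserts a false arithmetic fact to reach the correct list.
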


\begin{proof}
The proof is similar to the proof of Proposition \ref{pro:e2pagenonliftable}.

The computation of the $E_2$-page is given by the identification of the $E_1$-page provided by \eqref{eq:e1pageendm}, together with the observation made in the proof of the identification \eqref{eq:refssenda1e2}: for degree reasons $T$ has a trivial action of $\E(1)$, so the first differential is induced by the multiplication of $T$ on 
\begin{equation*}
 \F[v_1^{-1}] \{ x_{-2},x_{-1},x_{0},x_{1} \} \otimes T^{\otimes n}.
\end{equation*}
The identification of the non-trivial elements in the appropriate degree comes from the explicit formula for the degree of a generic element in this spectral sequence:
\begin{equation*}
|v_1^i\eta^jx_k| = (-i,3i+2j+k,j).
\end{equation*}
The result follows by inspection.
\end{proof}

\begin{cor}
There are at most $8$ $\A(1)$-modules whose underlying $\E(1)$-module is $N$.
\end{cor}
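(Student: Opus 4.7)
The plan is to apply the descent spectral sequence for the space of lifts from Corollary \ref{thm:liftss} with $M_{\E} = N$. The number of $\A(1)$-lifts is $|\pi_0(\L_{\stinf(\A(1))}(N))|$, and by convergence of the Bousfield-Kan spectral sequence this cardinality is bounded by the product $\prod_{s\geq 0} |E^{s,s}_{\infty}(\L(N))|$ of the filtration quotients along the diagonal $s=n$. Since $E_r$-differentials can only shrink these groups, it suffices to bound the analogous product at the $E_2$-page.

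The next step is to identify the diagonal terms. For $s\geq 2$, Proposition \ref{pro:comparisonliftendm} combined with the comparison $E_r^{s,n}(\gm)\cong E_r^{s,0,n}(\End)$ from Proposition \ref{pro:bkssendautm} yields an isomorphism $E^{s,s}_r(\L(N))\cong E^{s-1,0,s}_r(\End(N))$. Using the computed $E_2$-page
\begin{equation*}
E_2^{\bullet,\bullet}(\End(N))\cong \F[v_1^{-1},\eta]\{x_{-2},x_{-1},x_0,x_1\},
\end{equation*}
together with the tridegree formula $|v_1^{-i}\eta^j x_k|=(i,-3i+2j+k,j)$, the system $(i,-3i+2j+k,j)=(s-1,0,s)$ forces $k=s-3$. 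This lies in the allowed range $\{-2,-1,0,1\}$ only for $s\in\{1,2,3,4\}$, and the comparison restricts us to $s\geq 2$, producing exactly three $\F_2$-lines on the diagonal: $v_1^{-1}\eta^2 x_{-1}$ at $s=2$, $v_1^{-2}\eta^3 x_0$ at $s=3$, and $v_1^{-3}\eta^4 x_1$ at $s=4$.

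The low-filtration terms $s=0$ and $s=1$ are handled directly. At $s=0$, $E_1^{0,0}(\L(N))=\pi_0(\L_{\stinf(\E(1))}(N))$ is a point, since the only lift of $N$ to $\stinf(\E(1))$ is $N$ itself. At $s=1$, an explicit inspection of $\pi_1(\L_{\mod_{\stinf(\A(1))}(T^{\otimes 2})}(N))$ via the same cobar-type description used to compute the $\End(N)$-spectral sequence shows the group is trivial (the degree constraint $k=s-3=-2$ combined with $j=s=1$ leaves no room for a nonzero class in the relevant tridegree). Multiplying the three surviving $\F_2$-factors gives $|\pi_0(\L(N))|\leq 2^3 = 8$.

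The principal obstacle in this argument is the bookkeeping of the low-filtration entries $s=0,1$ and the verification that the three identified classes at $s=2,3,4$ survive to $E_\infty$; the first is a direct inspection, and the latter is automatic for the purposes of producing the upper bound, since later pages can only reduce the cardinalities. The extension problem relating the $E_\infty$-filtration to $\pi_0(\L(N))$ is likewise irrelevant to a cardinality bound, since the orders are multiplicative.
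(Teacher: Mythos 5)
Your overall strategy is exactly the paper's: the corollary is read off from the preceding proposition, which exhibits the three generators $\eta^2v_1^{-1}x_{-1}$, $\eta^3v_1^{-2}x_0$, $\eta^4v_1^{-3}x_1$ in the degrees $(s-1,0,s)$ of $E_2(\End(N))$, transports them to the diagonal of the lift spectral sequence via Propositions \ref{pro:comparisonliftendm} and \ref{pro:bkssendautm}, and concludes $|\pi_0(\L(N))|\leq 2^3=8$. Your bookkeeping of the filtration quotients along $s=n$, and the observation that differentials and extension problems can only help an upper bound on cardinality, is the right way to make this precise.

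One step as written is wrong, however: your parenthetical justification for the vanishing at $s=1$. The constraint $k=s-3=-2$, $j=s=1$, $i=s-1=0$ does \emph{not} leave the tridegree $(0,0,1)$ empty --- the monomial $\eta x_{-2}$ sits exactly there in $\F[v_1^{-1},\eta]\{x_{-2},x_{-1},x_0,x_1\}$, so the group you claim is empty "for degree reasons" is in fact one-dimensional. The deeper point is that the tridegree argument is not the relevant one at $s=1$ anyway: the identification $E^{s,s}_r(\L)\cong E^{s-1,0,s}_r(\End)$ passes through $\gm\to\End$, which is an isomorphism on $\pi_s$ only for $s\geq 1$, so $E^{1,1}(\L)\cong E^{0,1}(\gm)$ is the normalized $\pi_0$ of an \emph{automorphism} space (the units of the endomorphism monoid of $T\otimes N$), not a homotopy group of the endomorphism spectrum. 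To rule out a contribution there you must compute that normalized automorphism group directly; the paper itself leaves this term (and $s=0$) untreated, so you are not worse off than the source, but the reason you give for discarding it does not hold up.
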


Finally, one observes that this bound is actually sharp in our case, since the $8$ different $\A(1)$-modules described graphically in figure \ref{fig:8liftsofn} are solutions to the lifting problem for $N$.

\begin{figure}
\definecolor{qqqqff}{rgb}{0.3333333333333333,0.3333333333333333,0.3333333333333333}
\begin{tikzpicture}[line cap=round,line join=round,>=triangle 45,x=0.5cm,y=0.5cm]
\clip(35.59591569613088,-8.232340631622867) rectangle (51.7747216408982,7.994473639390187);
\draw (37.54285071136855,2.6904768936536914)-- (37.54285071136855,1.6904768936536914);
\draw (40.54285071136855,2.6904768936536914)-- (39.54285071136855,1.6904768936536914);
\draw (40.54285071136855,4.690476893653692)-- (40.54285071136855,3.6904768936536914);
\draw (40.54285071136855,1.6904768936536914)-- (40.54285071136855,0.6904768936536914);
\draw [shift={(41.303801629353885,3.6904768936536914)}] plot[domain=2.221269262549897:4.061916044629689,variable=\t]({1.*1.256601090077007*cos(\t r)+0.*1.256601090077007*sin(\t r)},{0.*1.256601090077007*cos(\t r)+1.*1.256601090077007*sin(\t r)});
\draw [shift={(41.303801629353885,1.6904768936536911)}] plot[domain=2.221269262549897:4.061916044629689,variable=\t]({1.*1.2566010900770068*cos(\t r)+0.*1.2566010900770068*sin(\t r)},{0.*1.2566010900770068*cos(\t r)+1.*1.2566010900770068*sin(\t r)});
\draw [shift={(39.82346582424781,2.6904768936536922)}] plot[domain=-0.9471784979433826:0.9471784979433822,variable=\t]({1.*1.2318744318386192*cos(\t r)+0.*1.2318744318386192*sin(\t r)},{0.*1.2318744318386192*cos(\t r)+1.*1.2318744318386192*sin(\t r)});
\draw (40.54285071136855,0.6904768936536914)-- (40.54285071136855,1.6904768936536914);
\draw (40.54285071136855,3.6904768936536914)-- (40.54285071136855,4.690476893653692);
\draw [shift={(36.72823549855945,2.7027660511488283)}] plot[domain=-0.8931804282524105:0.908390767695257,variable=\t]({1.*1.2993564881595165*cos(\t r)+0.*1.2993564881595165*sin(\t r)},{0.*1.2993564881595165*cos(\t r)+1.*1.2993564881595165*sin(\t r)});
\draw (37.52735981276525,3.7273282082517265)-- (37.52735981276525,4.727328208251727);
\draw [shift={(40.30253799001807,2.7147391196253197)}] plot[domain=2.2241695302319906:4.0742261163904425,variable=\t]({1.*1.2752403180946958*cos(\t r)+0.*1.2752403180946958*sin(\t r)},{0.*1.2752403180946958*cos(\t r)+1.*1.2752403180946958*sin(\t r)});
\draw (39.52735981276525,4.727328208251727)-- (39.52735981276525,3.7273282082517265);
\draw (43.52735981276525,2.7273282082517265)-- (43.52735981276525,1.727328208251727);
\draw [shift={(42.712744599956146,2.7396173657468648)}] plot[domain=-0.8931804282524105:0.9083907676952563,variable=\t]({1.*1.2993564881595177*cos(\t r)+0.*1.2993564881595177*sin(\t r)},{0.*1.2993564881595177*cos(\t r)+1.*1.2993564881595177*sin(\t r)});
\draw (43.511868914161944,3.7641795228497616)-- (42.52735981276525,4.727328208251727);
\draw (42.52735981276525,6.727328208251727)-- (42.52735981276525,5.727328208251727);
\draw (42.52735981276525,3.7273282082517265)-- (42.52735981276525,2.7273282082517265);
\draw [shift={(41.77735981276525,3.727328208251728)}] plot[domain=-0.9272952180016132:0.9272952180016119,variable=\t]({1.*1.25*cos(\t r)+0.*1.25*sin(\t r)},{0.*1.25*cos(\t r)+1.*1.25*sin(\t r)});
\draw [shift={(41.77735981276525,5.727328208251727)}] plot[domain=-0.9272952180016123:0.9272952180016122,variable=\t]({1.*1.25*cos(\t r)+0.*1.25*sin(\t r)},{0.*1.25*cos(\t r)+1.*1.25*sin(\t r)});
\draw [shift={(43.27735981276525,4.727328208251728)}] plot[domain=2.2142974355881813:4.068887871591405,variable=\t]({1.*1.25*cos(\t r)+0.*1.25*sin(\t r)},{0.*1.25*cos(\t r)+1.*1.25*sin(\t r)});
\draw (48.02735981276525,2.7273282082517265)-- (47.02735981276525,1.727328208251727);
\draw (48.02735981276525,4.727328208251727)-- (48.02735981276525,3.7273282082517265);
\draw (48.02735981276525,1.727328208251727)-- (48.02735981276525,0.727328208251727);
\draw [shift={(48.78831073075059,3.7273282082517274)}] plot[domain=2.2212692625499013:4.061916044629685,variable=\t]({1.*1.2566010900770106*cos(\t r)+0.*1.2566010900770106*sin(\t r)},{0.*1.2566010900770106*cos(\t r)+1.*1.2566010900770106*sin(\t r)});
\draw [shift={(48.78831073075059,1.7273282082517267)}] plot[domain=2.2212692625499013:4.061916044629685,variable=\t]({1.*1.256601090077011*cos(\t r)+0.*1.256601090077011*sin(\t r)},{0.*1.256601090077011*cos(\t r)+1.*1.256601090077011*sin(\t r)});
\draw [shift={(47.30797492564451,2.7273282082517265)}] plot[domain=-0.9471784979433826:0.9471784979433825,variable=\t]({1.*1.2318744318386181*cos(\t r)+0.*1.2318744318386181*sin(\t r)},{0.*1.2318744318386181*cos(\t r)+1.*1.2318744318386181*sin(\t r)});
\draw (48.02735981276525,0.727328208251727)-- (48.02735981276525,1.727328208251727);
\draw (48.02735981276525,3.7273282082517265)-- (48.02735981276525,4.727328208251727);
\draw [shift={(47.787047091414756,2.7515904342233544)}] plot[domain=2.224169530231986:4.074226116390447,variable=\t]({1.*1.2752403180946912*cos(\t r)+0.*1.2752403180946912*sin(\t r)},{0.*1.2752403180946912*cos(\t r)+1.*1.2752403180946912*sin(\t r)});
\draw (46.02735981276525,4.727328208251727)-- (47.011868914161944,3.7641795228497616);
\draw (46.02735981276525,3.7273282082517265)-- (46.02735981276525,2.7273282082517265);
\draw (46.02735981276525,5.727328208251727)-- (46.02735981276525,6.727328208251727);
\draw [shift={(45.27735981276525,5.727328208251727)}] plot[domain=-0.9272952180016123:0.9272952180016122,variable=\t]({1.*1.25*cos(\t r)+0.*1.25*sin(\t r)},{0.*1.25*cos(\t r)+1.*1.25*sin(\t r)});
\draw [shift={(45.27735981276525,3.727328208251728)}] plot[domain=-0.9272952180016132:0.9272952180016119,variable=\t]({1.*1.25*cos(\t r)+0.*1.25*sin(\t r)},{0.*1.25*cos(\t r)+1.*1.25*sin(\t r)});
\draw [shift={(46.77735981276525,4.727328208251728)}] plot[domain=2.2142974355881813:4.068887871591405,variable=\t]({1.*1.25*cos(\t r)+0.*1.25*sin(\t r)},{0.*1.25*cos(\t r)+1.*1.25*sin(\t r)});
\draw [shift={(41.303801629353885,-3.6904768936536914)}] plot[domain=2.221269262549897:4.061916044629689,variable=\t]({1.*1.256601090077007*cos(\t r)+0.*1.256601090077007*sin(\t r)},{0.*1.256601090077007*cos(\t r)+1.*1.256601090077007*sin(\t r)});
\draw [shift={(41.303801629353885,-1.6904768936536911)}] plot[domain=2.221269262549897:4.061916044629689,variable=\t]({1.*1.2566010900770068*cos(\t r)+0.*1.2566010900770068*sin(\t r)},{0.*1.2566010900770068*cos(\t r)+1.*1.2566010900770068*sin(\t r)});
\draw [shift={(39.82346582424781,-2.6904768936536922)}] plot[domain=-0.9471784979433817:0.9471784979433829,variable=\t]({1.*1.2318744318386192*cos(\t r)+0.*1.2318744318386192*sin(\t r)},{0.*1.2318744318386192*cos(\t r)+1.*1.2318744318386192*sin(\t r)});
\draw [shift={(36.72823549855945,-2.7027660511488283)}] plot[domain=-0.9083907676952574:0.8931804282524103,variable=\t]({1.*1.2993564881595165*cos(\t r)+0.*1.2993564881595165*sin(\t r)},{0.*1.2993564881595165*cos(\t r)+1.*1.2993564881595165*sin(\t r)});
\draw [shift={(40.30253799001807,-2.7147391196253197)}] plot[domain=2.2089591907891437:4.059015776947596,variable=\t]({1.*1.2752403180946958*cos(\t r)+0.*1.2752403180946958*sin(\t r)},{0.*1.2752403180946958*cos(\t r)+1.*1.2752403180946958*sin(\t r)});
\draw [shift={(42.712744599956146,-2.7396173657468648)}] plot[domain=-0.9083907676952565:0.8931804282524107,variable=\t]({1.*1.2993564881595177*cos(\t r)+0.*1.2993564881595177*sin(\t r)},{0.*1.2993564881595177*cos(\t r)+1.*1.2993564881595177*sin(\t r)});
\draw [shift={(41.77735981276525,-3.727328208251728)}] plot[domain=-0.9272952180016123:0.9272952180016129,variable=\t]({1.*1.25*cos(\t r)+0.*1.25*sin(\t r)},{0.*1.25*cos(\t r)+1.*1.25*sin(\t r)});
\draw [shift={(41.77735981276525,-5.727328208251727)}] plot[domain=-0.9272952180016123:0.9272952180016122,variable=\t]({1.*1.25*cos(\t r)+0.*1.25*sin(\t r)},{0.*1.25*cos(\t r)+1.*1.25*sin(\t r)});
\draw [shift={(43.27735981276525,-4.727328208251728)}] plot[domain=2.2142974355881804:4.068887871591405,variable=\t]({1.*1.25*cos(\t r)+0.*1.25*sin(\t r)},{0.*1.25*cos(\t r)+1.*1.25*sin(\t r)});
\draw [shift={(48.78831073075059,-3.7273282082517274)}] plot[domain=2.221269262549901:4.061916044629685,variable=\t]({1.*1.2566010900770106*cos(\t r)+0.*1.2566010900770106*sin(\t r)},{0.*1.2566010900770106*cos(\t r)+1.*1.2566010900770106*sin(\t r)});
\draw [shift={(48.78831073075059,-1.7273282082517267)}] plot[domain=2.2212692625499013:4.061916044629685,variable=\t]({1.*1.256601090077011*cos(\t r)+0.*1.256601090077011*sin(\t r)},{0.*1.256601090077011*cos(\t r)+1.*1.256601090077011*sin(\t r)});
\draw [shift={(47.30797492564451,-2.7273282082517265)}] plot[domain=-0.9471784979433826:0.9471784979433824,variable=\t]({1.*1.2318744318386181*cos(\t r)+0.*1.2318744318386181*sin(\t r)},{0.*1.2318744318386181*cos(\t r)+1.*1.2318744318386181*sin(\t r)});
\draw [shift={(47.787047091414756,-2.7515904342233544)}] plot[domain=2.2089591907891393:4.0590157769476,variable=\t]({1.*1.2752403180946912*cos(\t r)+0.*1.2752403180946912*sin(\t r)},{0.*1.2752403180946912*cos(\t r)+1.*1.2752403180946912*sin(\t r)});
\draw [shift={(45.27735981276525,-5.727328208251727)}] plot[domain=-0.9272952180016123:0.9272952180016122,variable=\t]({1.*1.25*cos(\t r)+0.*1.25*sin(\t r)},{0.*1.25*cos(\t r)+1.*1.25*sin(\t r)});
\draw [shift={(45.27735981276525,-3.727328208251728)}] plot[domain=-0.9272952180016123:0.9272952180016129,variable=\t]({1.*1.25*cos(\t r)+0.*1.25*sin(\t r)},{0.*1.25*cos(\t r)+1.*1.25*sin(\t r)});
\draw [shift={(46.77735981276525,-4.727328208251728)}] plot[domain=2.2142974355881804:4.068887871591405,variable=\t]({1.*1.25*cos(\t r)+0.*1.25*sin(\t r)},{0.*1.25*cos(\t r)+1.*1.25*sin(\t r)});
\draw (37.54285071136855,-2.6904768936536914)-- (37.54285071136855,-1.6904768936536914);
\draw (40.54285071136855,-2.6904768936536914)-- (39.54285071136855,-1.6904768936536914);
\draw (40.54285071136855,-4.690476893653692)-- (40.54285071136855,-3.6904768936536914);
\draw (40.54285071136855,-1.6904768936536914)-- (40.54285071136855,-0.6904768936536914);
\draw (40.54285071136855,-0.6904768936536914)-- (40.54285071136855,-1.6904768936536914);
\draw (40.54285071136855,-3.6904768936536914)-- (40.54285071136855,-4.690476893653692);
\draw (37.52735981276525,-3.7273282082517265)-- (37.52735981276525,-4.727328208251727);
\draw (39.52735981276525,-4.727328208251727)-- (39.52735981276525,-3.7273282082517265);
\draw (43.52735981276525,-2.7273282082517265)-- (43.52735981276525,-1.727328208251727);
\draw (43.511868914161944,-3.7641795228497616)-- (42.52735981276525,-4.727328208251727);
\draw (42.52735981276525,-6.727328208251727)-- (42.52735981276525,-5.727328208251727);
\draw (42.52735981276525,-3.7273282082517265)-- (42.52735981276525,-2.7273282082517265);
\draw (48.02735981276525,-2.7273282082517265)-- (47.02735981276525,-1.727328208251727);
\draw (48.02735981276525,-4.727328208251727)-- (48.02735981276525,-3.7273282082517265);
\draw (48.02735981276525,-1.727328208251727)-- (48.02735981276525,-0.727328208251727);
\draw (48.02735981276525,-0.727328208251727)-- (48.02735981276525,-1.727328208251727);
\draw (48.02735981276525,-3.7273282082517265)-- (48.02735981276525,-4.727328208251727);
\draw (46.02735981276525,-4.727328208251727)-- (47.011868914161944,-3.7641795228497616);
\draw (46.02735981276525,-3.7273282082517265)-- (46.02735981276525,-2.7273282082517265);
\draw (46.02735981276525,-5.727328208251727)-- (46.02735981276525,-6.727328208251727);
\begin{scriptsize}
\draw [fill=qqqqff] (37.54285071136855,2.6904768936536914) circle (2.5pt);
\draw [fill=qqqqff] (37.54285071136855,1.6904768936536914) circle (2.5pt);
\draw [fill=qqqqff] (40.54285071136855,2.6904768936536914) circle (2.5pt);
\draw [fill=qqqqff] (39.54285071136855,1.6904768936536914) circle (2.5pt);
\draw [fill=qqqqff] (40.54285071136855,4.690476893653692) circle (2.5pt);
\draw [fill=qqqqff] (40.54285071136855,0.6904768936536914) circle (2.5pt);
\draw [fill=qqqqff] (40.54285071136855,1.6904768936536914) circle (2.5pt);
\draw [fill=qqqqff] (40.54285071136855,3.6904768936536914) circle (2.5pt);
\draw [fill=qqqqff] (37.52735981276525,3.7273282082517265) circle (2.5pt);
\draw [fill=qqqqff] (37.52735981276525,4.727328208251727) circle (2.5pt);
\draw [fill=qqqqff] (39.52735981276525,3.7273282082517265) circle (2.5pt);
\draw [fill=qqqqff] (39.52735981276525,4.727328208251727) circle (2.5pt);
\draw [fill=qqqqff] (43.52735981276525,2.7273282082517265) circle (2.5pt);
\draw [fill=qqqqff] (43.511868914161944,3.7641795228497616) circle (2.5pt);
\draw [fill=qqqqff] (42.52735981276525,4.727328208251727) circle (2.5pt);
\draw [fill=qqqqff] (42.52735981276525,5.727328208251727) circle (2.5pt);
\draw [fill=qqqqff] (42.52735981276525,6.727328208251727) circle (2.5pt);
\draw [fill=qqqqff] (42.52735981276525,3.7273282082517265) circle (2.5pt);
\draw [fill=qqqqff] (42.52735981276525,2.7273282082517265) circle (2.5pt);
\draw [fill=qqqqff] (48.02735981276525,2.7273282082517265) circle (2.5pt);
\draw [fill=qqqqff] (47.02735981276525,1.727328208251727) circle (2.5pt);
\draw [fill=qqqqff] (48.02735981276525,4.727328208251727) circle (2.5pt);
\draw [fill=qqqqff] (48.02735981276525,0.727328208251727) circle (2.5pt);
\draw [fill=qqqqff] (48.02735981276525,1.727328208251727) circle (2.5pt);
\draw [fill=qqqqff] (48.02735981276525,3.7273282082517265) circle (2.5pt);
\draw [fill=qqqqff] (47.011868914161944,3.7641795228497616) circle (2.5pt);
\draw [fill=qqqqff] (46.02735981276525,4.727328208251727) circle (2.5pt);
\draw [fill=qqqqff] (46.02735981276525,5.727328208251727) circle (2.5pt);
\draw [fill=qqqqff] (46.02735981276525,6.727328208251727) circle (2.5pt);
\draw [fill=qqqqff] (46.02735981276525,3.7273282082517265) circle (2.5pt);
\draw [fill=qqqqff] (46.02735981276525,2.7273282082517265) circle (2.5pt);
\draw [fill=qqqqff] (40.54285071136855,-4.690476893653692) circle (2.5pt);
\draw [fill=qqqqff] (40.54285071136855,-2.6904768936536914) circle (2.5pt);
\draw [fill=qqqqff] (40.54285071136855,-2.6904768936536914) circle (2.5pt);
\draw [fill=qqqqff] (40.54285071136855,-0.6904768936536914) circle (2.5pt);
\draw [fill=qqqqff] (40.54285071136855,-1.6904768936536914) circle (2.5pt);
\draw [fill=qqqqff] (40.54285071136855,-3.6904768936536914) circle (2.5pt);
\draw [fill=qqqqff] (37.54285071136855,-1.6904768936536914) circle (2.5pt);
\draw [fill=qqqqff] (37.52735981276525,-3.7273282082517265) circle (2.5pt);
\draw [fill=qqqqff] (39.54285071136855,-1.6904768936536914) circle (2.5pt);
\draw [fill=qqqqff] (39.52735981276525,-3.7273282082517265) circle (2.5pt);
\draw [fill=qqqqff] (43.52735981276525,-1.727328208251727) circle (2.5pt);
\draw [fill=qqqqff] (43.511868914161944,-3.7641795228497616) circle (2.5pt);
\draw [fill=qqqqff] (42.52735981276525,-2.7273282082517265) circle (2.5pt);
\draw [fill=qqqqff] (42.52735981276525,-4.727328208251727) circle (2.5pt);
\draw [fill=qqqqff] (42.52735981276525,-4.727328208251727) circle (2.5pt);
\draw [fill=qqqqff] (42.52735981276525,-6.727328208251727) circle (2.5pt);
\draw [fill=qqqqff] (42.52735981276525,-5.727328208251727) circle (2.5pt);
\draw [fill=qqqqff] (42.52735981276525,-3.7273282082517265) circle (2.5pt);
\draw [fill=qqqqff] (48.02735981276525,-4.727328208251727) circle (2.5pt);
\draw [fill=qqqqff] (48.02735981276525,-2.7273282082517265) circle (2.5pt);
\draw [fill=qqqqff] (48.02735981276525,-2.7273282082517265) circle (2.5pt);
\draw [fill=qqqqff] (48.02735981276525,-0.727328208251727) circle (2.5pt);
\draw [fill=qqqqff] (48.02735981276525,-1.727328208251727) circle (2.5pt);
\draw [fill=qqqqff] (48.02735981276525,-3.7273282082517265) circle (2.5pt);
\draw [fill=qqqqff] (47.02735981276525,-1.727328208251727) circle (2.5pt);
\draw [fill=qqqqff] (47.011868914161944,-3.7641795228497616) circle (2.5pt);
\draw [fill=qqqqff] (46.02735981276525,-6.727328208251727) circle (2.5pt);
\draw [fill=qqqqff] (46.02735981276525,-4.727328208251727) circle (2.5pt);
\draw [fill=qqqqff] (46.02735981276525,-4.727328208251727) circle (2.5pt);
\draw [fill=qqqqff] (46.02735981276525,-2.7273282082517265) circle (2.5pt);
\draw [fill=qqqqff] (46.02735981276525,-3.7273282082517265) circle (2.5pt);
\draw [fill=qqqqff] (46.02735981276525,-5.727328208251727) circle (2.5pt);
\draw [fill=qqqqff] (37.54285071136855,-2.6904768936536914) circle (2.5pt);
\draw [fill=qqqqff] (37.54285071136855,-1.6904768936536914) circle (2.5pt);
\draw [fill=qqqqff] (40.54285071136855,-2.6904768936536914) circle (2.5pt);
\draw [fill=qqqqff] (39.54285071136855,-1.6904768936536914) circle (2.5pt);
\draw [fill=qqqqff] (40.54285071136855,-4.690476893653692) circle (2.5pt);
\draw [fill=qqqqff] (40.54285071136855,-3.6904768936536914) circle (2.5pt);
\draw [fill=qqqqff] (40.54285071136855,-1.6904768936536914) circle (2.5pt);
\draw [fill=qqqqff] (40.54285071136855,-0.6904768936536914) circle (2.5pt);
\draw [fill=qqqqff] (40.54285071136855,-0.6904768936536914) circle (2.5pt);
\draw [fill=qqqqff] (40.54285071136855,-1.6904768936536914) circle (2.5pt);
\draw [fill=qqqqff] (40.54285071136855,-3.6904768936536914) circle (2.5pt);
\draw [fill=qqqqff] (40.54285071136855,-4.690476893653692) circle (2.5pt);
\draw [fill=qqqqff] (37.52735981276525,-3.7273282082517265) circle (2.5pt);
\draw [fill=qqqqff] (37.52735981276525,-4.727328208251727) circle (2.5pt);
\draw [fill=qqqqff] (39.52735981276525,-4.727328208251727) circle (2.5pt);
\draw [fill=qqqqff] (39.52735981276525,-3.7273282082517265) circle (2.5pt);
\draw [fill=qqqqff] (43.52735981276525,-2.7273282082517265) circle (2.5pt);
\draw [fill=qqqqff] (43.52735981276525,-1.727328208251727) circle (2.5pt);
\draw [fill=qqqqff] (43.511868914161944,-3.7641795228497616) circle (2.5pt);
\draw [fill=qqqqff] (42.52735981276525,-4.727328208251727) circle (2.5pt);
\draw [fill=qqqqff] (42.52735981276525,-6.727328208251727) circle (2.5pt);
\draw [fill=qqqqff] (42.52735981276525,-5.727328208251727) circle (2.5pt);
\draw [fill=qqqqff] (42.52735981276525,-3.7273282082517265) circle (2.5pt);
\draw [fill=qqqqff] (42.52735981276525,-2.7273282082517265) circle (2.5pt);
\draw [fill=qqqqff] (48.02735981276525,-2.7273282082517265) circle (2.5pt);
\draw [fill=qqqqff] (47.02735981276525,-1.727328208251727) circle (2.5pt);
\draw [fill=qqqqff] (48.02735981276525,-4.727328208251727) circle (2.5pt);
\draw [fill=qqqqff] (48.02735981276525,-3.7273282082517265) circle (2.5pt);
\draw [fill=qqqqff] (48.02735981276525,-1.727328208251727) circle (2.5pt);
\draw [fill=qqqqff] (48.02735981276525,-0.727328208251727) circle (2.5pt);
\draw [fill=qqqqff] (48.02735981276525,-0.727328208251727) circle (2.5pt);
\draw [fill=qqqqff] (48.02735981276525,-1.727328208251727) circle (2.5pt);
\draw [fill=qqqqff] (48.02735981276525,-3.7273282082517265) circle (2.5pt);
\draw [fill=qqqqff] (48.02735981276525,-4.727328208251727) circle (2.5pt);
\draw [fill=qqqqff] (46.02735981276525,-4.727328208251727) circle (2.5pt);
\draw [fill=qqqqff] (47.011868914161944,-3.7641795228497616) circle (2.5pt);
\draw [fill=qqqqff] (46.02735981276525,-3.7273282082517265) circle (2.5pt);
\draw [fill=qqqqff] (46.02735981276525,-2.7273282082517265) circle (2.5pt);
\draw [fill=qqqqff] (46.02735981276525,-5.727328208251727) circle (2.5pt);
\draw [fill=qqqqff] (46.02735981276525,-6.727328208251727) circle (2.5pt);
\end{scriptsize}
\end{tikzpicture}
\caption{$8$ different lifts of $N$} \label{fig:8liftsofn}
\end{figure}
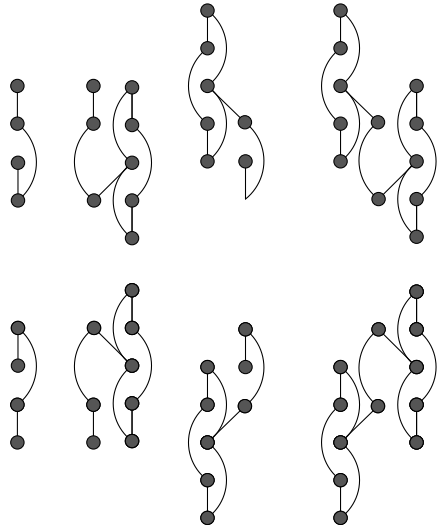

\bibliographystyle{alpha}
\bibliography{biblio}

\begin{thebibliography}{MNN15}

\bibitem[AP76]{AP76}
J.~F. Adams and S.~B. Priddy.
\newblock Uniqueness of {$B{\rm SO}$}.
\newblock {\em Math. Proc. Cambridge Philos. Soc.}, 80(3):475--509, 1976.

\bibitem[Cb89]{CB89}
W.~W. Crawley-boevey.
\newblock Functorial filtrations iii: Semidihedral algebras.
\newblock {\em J. London Math. Soc}, 40:31--39, 1989.

\bibitem[CT04]{CT04}
Jon~F. Carlson and Jacques Th{\'e}venaz.
\newblock The classification of endo-trivial modules.
\newblock {\em Invent. Math.}, 158(2):389--411, 2004.

\bibitem[CT05]{CT05}
Jon~F. Carlson and Jacques Th{\'e}venaz.
\newblock The classification of torsion endo-trivial modules.
\newblock {\em Ann. of Math. (2)}, 162(2):823--883, 2005.

\bibitem[HPS97]{HPS}
Mark Hovey, John~H. Palmieri, and Neil~P. Strickland.
\newblock Axiomatic stable homotopy theory.
\newblock {\em Mem. Amer. Math. Soc.}, 128(610):x+114, 1997.

\bibitem[Mar83]{Mar83}
H.~R. Margolis.
\newblock {\em Spectra and the {S}teenrod algebra}, volume~29 of {\em
  North-Holland Mathematical Library}.
\newblock North-Holland Publishing Co., Amsterdam, 1983.
\newblock Modules over the Steenrod algebra and the stable homotopy category.

\bibitem[MNN15]{MNN15}
A.~{Mathew}, N.~{Naumann}, and J.~{Noel}.
\newblock {Nilpotence and descent in equivariant stable homotopy theory}.
\newblock {\em ArXiv e-prints}, July 2015.

\bibitem[MS14]{MS14}
A.~{Mathew} and V.~{Stojanoska}.
\newblock {The Picard group of topological modular forms via descent theory}.
\newblock {\em ArXiv e-prints}, September 2014.

\bibitem[Pal97]{Pal97}
John~H. Palmieri.
\newblock A note on the cohomology of finite-dimensional cocommutative {H}opf
  algebras.
\newblock {\em J. Algebra}, 188(1):203--215, 1997.

\bibitem[Pal01]{Pal01}
John~H. Palmieri.
\newblock Stable homotopy over the {S}teenrod algebra.
\newblock {\em Mem. Amer. Math. Soc.}, 151(716):xiv+172, 2001.

\bibitem[Pow15]{Pow15}
Geoffrey Powell.
\newblock Truncated projective spaces, {B}rown-{G}itler spectra and
  indecomposable a(1)-modules.
\newblock {\em Topology Appl.}, 183:45--85, 2015.

\bibitem[Qui71]{Qui71}
Daniel Quillen.
\newblock A cohomological criterion for {$p$}-nilpotence.
\newblock {\em J. Pure Appl. Algebra}, 1(4):361--372, 1971.

\bibitem[SS03]{SS03}
Stefan Schwede and Brooke Shipley.
\newblock Stable model categories are categories of modules.
\newblock {\em Topology}, 42(1):103--153, 2003.

\end{thebibliography}

\end{document}